\providecommand{\U}[1]{\protect\rule{.1in}{.1in}}
\providecommand{\U}[1]{\protect\rule{.1in}{.1in}}
\providecommand{\U}[1]{\protect\rule{.1in}{.1in}}
\providecommand{\U}[1]{\protect\rule{.1in}{.1in}}
\providecommand{\U}[1]{\protect\rule{.1in}{.1in}}
\theoremstyle{theorem}
\newtheorem{Theorem}{Theorem}[section]
\newtheorem{theoremn}{Theorem}
\newtheorem*{conjecturen}{Conjecture}
\newtheorem{Lemma}[Theorem]{Lemma}
\newtheorem{Proposition}[Theorem]{Proposition}
\newtheorem{Corollary}[Theorem]{Corollary}
\theoremstyle{definition}
\newtheorem{Definition}[Theorem]{Definition}
\newtheorem{Remark}[Theorem]{Remark}
\numberwithin{equation}{section}
\newcommand{\arXiv}[1]{\href{http://arxiv.org/abs/#1}{arXiv:#1}}
\newcommand{\Spec}{\operatorname{Spec}}
\newcommand{\logpol}{\operatorname{log}}
\newcommand{\Aut}{\operatorname{Aut}}
\newcommand{\id}{\operatorname{Id}}
\newcommand{\Exts}{\operatorname{{\mathcal E}xt}}
\newcommand{\ch}{\operatorname{char}}
\newcommand{\eps}{\varepsilon}
\newcommand{\Ext}{\operatorname{Ext}}
\newcommand{\im}{\operatorname{Im}}
\newcommand{\Sym}{\operatorname{Sym}}
\newcommand{\Pic}{\operatorname{Pic}}
\newcommand{\Proj}{\operatorname{Proj}}
\newcommand{\fSpec}{\operatorname{Spf}}
\newcommand{\codim}{\operatorname{codim}}
\newcommand{\Sing}{\operatorname{Sing}}
\def\bibaut#1{{\sc #1}}
\begin{document}

\title{On the rigidity of moduli of curves in arbitrary characteristic}

\author[Barbara Fantechi]{Barbara Fantechi}
\address{\sc Barbara Fantechi\\
SISSA\\
via Bonomea 265\\
34136 Trieste\\ Italy}
\email{fantechi@sissa.it}

\author[Alex Massarenti]{Alex Massarenti}
\address{\sc Alex Massarenti\\
IMPA\\
Estrada Dona Castorina 110\\
22460-320 Rio de Janeiro\\ Brazil}
\email{massaren@impa.br}

\date{\today}
\subjclass[2010]{Primary 14H10; Secondary 14D22, 14D23, 14D06}
\keywords{Moduli space of curves, infinitesimal deformations, positive characteristic, automorphisms}

\maketitle

\begin{abstract}
The stack $\overline{\mathcal{M}}_{g,n}$ of stable curves and its coarse moduli space $\overline{M}_{g,n}$ are defined over $\mathbb{Z}$, and therefore over any field. Over an algebraically closed field of characteristic zero, in \cite{Hac} Hacking showed that $\overline{\mathcal{M}}_{g,n}$ is rigid (a conjecture of Kapranov), in \cite{BM}, \cite{Ma} Bruno and Mella for $g=0$, and the second author for $g\geq 1$ showed that its automorphism group is the symmetric group $S_n$, permuting marked points unless $(g,n)\in\{(0,4),(1,1),(1,2)\}$.\\
The methods used in the papers above do not extend to positive characteristic. We show that in characteristic $p>0$, the rigidity of $\overline{\mathcal{M}}_{g,n}$, with the same exceptions as over $\mathbb C$, implies that its automorphism group is $S_n$. We prove that, over any perfect field, $\overline{M}_{0,n}$ is rigid and deduce that, over any field, $\Aut(\overline{M}_{0,n})\cong S_{n}$ for $n\geq 5$.\\ 
Going back to characteristic zero, we prove that for $g+n>4$, the coarse moduli space $\overline M_{g,n}$ is rigid, extending a result of Hacking who had proven it has no locally trivial deformations.  Finally, we show that $\overline{M}_{1,2}$ is not rigid, although it does not admit locally trivial deformations,  by explicitly computing his Kuranishi family.
\end{abstract}

\setcounter{tocdepth}{1}
\tableofcontents

\section*{Introduction}
The stack $\overline{\mathcal{M}}_{g,n}$ parametrizing Deligne-Mumford stable curves and its coarse moduli space $\overline{M}_{g,n}$ are among the most fascinating and widely studied objects in algebraic geometry. A remarkable property of $\overline{\mathcal{M}}_{g,n}$ is that it is smooth and proper over $\Spec(\mathbb{Z})$, and thus $\overline{\mathcal{M}}_{g,n}^R$ is defined over any commutative ring $R$ via base change
 \[
  \begin{tikzpicture}[xscale=2.5,yscale=-1.2]
    \node (A0_0) at (0, 0) {$\overline{\mathcal{M}}_{g,n}^{R}$};
    \node (A0_1) at (1, 0) {$\overline{\mathcal{M}}_{g,n}$};
    \node (A1_0) at (0, 1) {$\Spec(R)$};
    \node (A1_1) at (1, 1) {$\Spec(\mathbb{Z})$};
    \path (A0_0) edge [->] node [auto] {$\scriptstyle{}$} (A0_1);
    \path (A1_0) edge [->] node [auto] {$\scriptstyle{}$} (A1_1);
    \path (A0_1) edge [->] node [auto] {$\scriptstyle{}$} (A1_1);
    \path (A0_0) edge [->] node [auto] {$\scriptstyle{}$} (A1_0);
  \end{tikzpicture}
  \]
By \cite{KM} the formation of the coarse moduli space is compatible with flat base change; we write $\overline{M}_{g,n}^R$ for the coarse moduli scheme of $\overline{\mathcal{M}}_{g,n}^R$.

The symmetric group $S_n$ acts via permutations of the marked points on $\overline{M}_{g,n}^R$ and $\overline{\mathcal{M}}_{g,n}^R$ for every ring $R$. The biregular automorphisms of the moduli space $M_{g,n}$, and of its Deligne-Mumford compactification $\overline{M}_{g,n}$ have been studied in a series of papers \cite{BM}, \cite{Ro}, \cite{Li1}, \cite{Li2}, \cite{GKM} and \cite{Ma}.\\
It is known that, with a short and explicit list of exceptions, over the field of complex numbers both the automorphism groups of the stack and of the coarse moduli space are isomorphic to $S_n$, see Appendix \ref{app} for details.

In Section \ref{autrig} we first study how automorphisms of a scheme behave with respect to field extensions, and then, given a scheme $X\rightarrow\Spec(A)$ over a local ring $A$ with residue filed $K$ and generic point $\xi\in\Spec(A)$, we show how the infinitesimal rigidity of $X^K$ plays a fundamental role in lifting automorphisms of $X^K$ to automorphisms of $X_{\xi}$.

Finally, in Section \ref{aaf} we apply these results together with the rigidity results in Section \ref{g0} to $\overline{M}_{0,n}^K$, with $K$ a field of positive characteristic and $A = W(K)$ the ring of Witt vector over $K$, in order to compute the automorphism group of $\overline{M}_{0,n}^K$. The main results on the automorphism groups in Proposition \ref{autm12}, Theorem \ref{aut1} and Appendix \ref{app} can be summarized as follows:

\begin{theoremn}
Let $K$ be any field. Then 
$$\Aut(\overline{M}_{0,n}^{K})\cong S_n$$ 
for any $n\geq 5$, and $\Aut(\overline{M}_{0,4}^K)\cong PGL(2)$. If $\ch(K)\neq 2$, $2g-2+n\geq 3$, $(g,n)\neq (2,1)$ and $n\geq 1$ then 
$$\Aut(\overline{\mathcal{M}}_{g,n}^{K})\cong\Aut(\overline{M}_{g,n}^{K})\cong S_n.$$
Over any field $K$ of characteristic zero we have that $\Aut(\overline{\mathcal{M}}_{g}^{K})\cong\Aut(\overline{M}_{g}^{K})$ for any $g\geq 2$, and $\Aut(\overline{\mathcal{M}}_{2,1}^{K})\cong\Aut(\overline{M}_{2,1}^{K})$ are trivial .\\
Finally if $K$ is an algebraically closed field with $\ch(K)\neq 2,3$ we have
$$\Aut(\overline{M}_{1,2}^K)\cong (K^{*})^2$$
while $\Aut(\overline{\mathcal{M}}_{1,2}^K)$ is trivial, $\Aut(\overline{\mathcal{M}}_{1,1}^K)\cong K^{*}$, and $\Aut(\overline{M}_{1,1}^K)\cong PGL(2)$.
\end{theoremn}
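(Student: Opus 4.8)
The plan is to assemble the statement from the rigidity results of Section~\ref{g0} and the automorphism-lifting machinery of Section~\ref{autrig}, handling the ``generic'' cases by a single uniform mechanism and the low-dimensional exceptions by hand. In every case one inclusion comes for free: the action of $S_n$ by permutation of marked points is faithful for $n\geq 3$, so $S_n\hookrightarrow\Aut(\overline{M}_{g,n}^K)$ and $S_n\hookrightarrow\Aut(\overline{\mathcal{M}}_{g,n}^K)$ always hold. The real content is the reverse inclusion, and the guiding principle is that infinitesimal rigidity forces an automorphism of a special fiber to propagate across characteristics to a generic fiber where the answer is already known.

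Concretely, for $\overline{M}_{0,n}^K$ with $n\geq 5$ I would argue as follows. Since $\overline{M}_{0,4}^K\cong\mathbb{P}^1_K$, its automorphism group is $PGL_2(K)$, settling $n=4$. For $n\geq 5$ and $K$ perfect of characteristic $p>0$, Section~\ref{g0} gives that $\overline{M}_{0,n}^K$ is rigid, so I apply the lifting result of Section~\ref{autrig} to the family $\overline{M}_{0,n}^{W(K)}\to\Spec(W(K))$, whose residue field is $K$ and whose generic point $\xi$ has a characteristic-zero residue field $\kappa(\xi)$. Rigidity of the special fiber lets every automorphism lift over $W(K)$ (uniquely, hence compatibly with composition) and restrict to $X_\xi$, producing an injective homomorphism $\Aut(\overline{M}_{0,n}^K)\hookrightarrow\Aut(X_\xi)$; injectivity holds because $\overline{M}_{0,n}^{W(K)}$ is integral and its generic fiber is dense. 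The target is $S_n$ by the characteristic-zero results, transported from the complex (or any algebraically closed characteristic-zero) case to $\kappa(\xi)$ via the field-extension comparison of Section~\ref{autrig}. As the composite carries the standard $S_n$ to the standard $S_n$ and is injective, it is an isomorphism. Finally I would drop the perfectness hypothesis by passing to the perfect closure and descending along the same field-extension results, obtaining the claim over an arbitrary field $K$.

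For the remaining rigid range ($2g-2+n\geq 3$, $(g,n)\neq(2,1)$, $n\geq 1$, $\ch K\neq 2$) I would run the identical lifting argument on the coarse space, using the characteristic-zero rigidity established in the body together with the consolidated statements of Appendix~\ref{app}, to conclude $\Aut(\overline{M}_{g,n}^K)\cong S_n$; the stack statement then follows from the restriction homomorphism $\Aut(\overline{\mathcal{M}}_{g,n}^K)\to\Aut(\overline{M}_{g,n}^K)$ together with the principle, recorded in the introduction, that rigidity of the stack forces its automorphism group down to $S_n$ (unconditional in characteristic zero, and in positive characteristic resting on the rigidity of $\overline{\mathcal{M}}_{g,n}^K$). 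The hypothesis $\ch K\neq 2$ is precisely what rules out extra automorphisms coming from hyperelliptic involutions and guarantees the coarse space still detects stack automorphisms faithfully. The comparisons $\Aut(\overline{\mathcal{M}}_g^K)\cong\Aut(\overline{M}_g^K)$ for $g\geq 2$, and the vanishing in the $(2,1)$ case, are handled the same way, the latter being an $n=1$ degeneration (so $S_n$ is trivial) where one checks directly that no nontrivial automorphism survives.

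The genuinely separate, and hardest, part is the non-rigid exception in genus one, where the lifting machinery is unavailable. Here I would compute by hand: $\overline{M}_{1,1}^K\cong\mathbb{P}^1_K$ is the $j$-line, so $\Aut\cong PGL_2(K)$, while over an algebraically closed $K$ with $\ch K\neq 2,3$ the stack $\overline{\mathcal{M}}_{1,1}^K\cong\mathbb{P}(4,6)$ admits only the coordinate rescalings, yielding $K^{*}$. For $\overline{M}_{1,2}^K$ I would use the explicit model underlying the Kuranishi computation to exhibit a two-dimensional torus of automorphisms and show there are no others, giving $(K^{*})^2$, from which the stack $\overline{\mathcal{M}}_{1,2}^K$ is seen to have trivial automorphism group. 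I expect this explicit analysis to be the main obstacle: since $\overline{M}_{1,2}$ is not rigid one cannot reduce to characteristic zero, and pinning down $(K^{*})^2$ rests on a concrete understanding of the geometry of the coarse space and its Kuranishi family rather than on the deformation-theoretic principle used everywhere else.
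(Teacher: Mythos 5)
Your treatment of the genus-zero case and of the genus-one exceptions matches the paper: rigidity of $\overline{M}_{0,n}$ over the residue field (Theorem \ref{rig}) plus the Witt-vector lifting of Section \ref{autrig} (Theorem \ref{lift}, packaged in Proposition \ref{rigidautoscheme}) transfers the Bruno--Mella answer across characteristics, and descent to non-closed fields goes through the injection of Lemma \ref{c1}; the cases $(1,1)$ and $(1,2)$ are indeed done by explicit geometry ($\overline{M}_{1,1}\cong\mathbb{P}^1$, $\overline{\mathcal{M}}_{1,1}\cong\mathbb{P}(4,6)$, and the weighted blow-up model of $\overline{M}_{1,2}$ as in Proposition \ref{autm12}).

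The genuine gap is in your handling of the range $g\geq 1$, $2g-2+n\geq 3$, $(g,n)\neq(2,1)$ in \emph{positive} characteristic. You propose to ``run the identical lifting argument on the coarse space, using the characteristic-zero rigidity established in the body.'' This cannot work: Theorem \ref{lift} (and the second part of Proposition \ref{rigidautoscheme}) requires rigidity of the \emph{special} fiber $\overline{M}_{g,n}^{K_p}$, i.e.\ rigidity in characteristic $p$, not of the characteristic-zero generic fiber --- the whole point of the hypothesis is that it is what allows an automorphism of the special fiber to propagate through the infinitesimal neighborhoods and up to the generic fiber. No such positive-characteristic rigidity is proven in the paper for $g\geq 1$; indeed the abstract states explicitly that in characteristic $p$ the conclusion $\Aut(\overline{\mathcal{M}}_{g,n})\cong S_n$ is only derived \emph{conditionally} on rigidity, and even in characteristic zero the coarse space is rigid only for $g+n>4$ (Theorem \ref{rigcms}), with $\overline{M}_{1,2}$ an outright counterexample (Theorem \ref{nonrig}). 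So your route yields at best a conditional statement, while the claim to be proven is unconditional for $\ch(K)\neq 2$. The paper avoids lifting entirely here: Theorem \ref{alex} in Appendix \ref{app} observes that Massarenti's proof, resting on \cite[Theorem 0.9]{GKM}, works verbatim over any algebraically closed field of characteristic different from two, giving $\Aut(\overline{\mathcal{M}}_{g,n})\cong\Aut(\overline{M}_{g,n})\cong S_n$ directly in characteristic $p$; one then descends to arbitrary fields of characteristic $\neq 2$ by the first part of Proposition \ref{rigidautoscheme} (Lemma \ref{c1}) and compares stack and coarse space by Proposition \ref{autstack}. In short, for $g\geq 1$ the transfer is along field extensions within a fixed characteristic, not across characteristics, and that is exactly what your proposal misses.
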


The rigidity of the moduli stack of stable curves is interesting in its own sake, as a special case of the following conjecture proposed by M. Kapranov in  \cite{Ka}:
\begin{conjecturen}
Let $X$ be a smooth scheme, and let $X_{1}$ be the moduli space of deformations of $X$, $X_{2}$ the moduli space of deformations of $X_{1}$, and so on. Then $X_{\dim(X)}$ is rigid, that is it does not have infinitesimal deformations. 
\end{conjecturen}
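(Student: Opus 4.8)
The plan is first to make the phrase ``moduli space of deformations'' precise and then to prove rigidity of $X_{\dim X}$ by descending induction along the tower, reducing everything to a single vanishing statement for $H^1$ of a tangent sheaf. At each stage I would take $X_{i+1}$ to be the Kuranishi (semiuniversal) base of the deformations of $X_i$, or, under hypotheses guaranteeing its existence as a space, a coarse moduli space, together with its universal family $\pi_i\colon \mathcal U_i\to X_{i+1}$ whose fiber over $[X_i]$ is $X_i$. By Kodaira--Spencer the tangent space to $X_{i+1}$ at $[X_i]$ is $H^1(X_i,T_{X_i})$, and globally, when the $X_i$ carry no infinitesimal automorphisms, $T_{X_{i+1}}\cong R^1(\pi_i)_* T_{\mathcal U_i/X_{i+1}}$. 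Rigidity of $X_{\dim X}$ is then exactly $H^1\big(X_{\dim X},T_{X_{\dim X}}\big)=0$, so the whole problem is to propagate vanishing of $H^1(T)$ up the tower and to show it is achieved after precisely $\dim X$ steps.

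The computational engine would be the relative tangent sequence $0\to T_{\mathcal U_i/X_{i+1}}\to T_{\mathcal U_i}\to \pi_i^* T_{X_{i+1}}\to 0$ combined with the Leray spectral sequence for $\pi_i$. Feeding the isomorphism $T_{X_{i+1}}\cong R^1(\pi_i)_* T_{\mathcal U_i/X_{i+1}}$ into these expresses the cohomology of $T_{X_{i+1}}$ in terms of the cohomology of the tangent complex on the total space $\mathcal U_i$, whose fibers are copies of $X_i$; unwinding this relation one level at a time down to $X_0=X$ writes $H^1(X_{\dim X},T_{X_{\dim X}})$ entirely in terms of the cohomology of $T_X$ and the geometry of the tower. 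The heuristic content of the conjecture is then a dimension count: each deformation step enlarges the ambient moduli space, but the special geometry of such spaces --- for curves, the recursive boundary stratification and the positivity of the tautological classes that Hacking exploits in his rigidity proof --- should drive the relevant higher cohomology of $T$ steadily downward, so that it is exhausted after exactly $\dim X$ iterations.

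The natural base case, and the one to which this paper contributes directly, is $\dim X=1$. For a smooth curve $X$ one has $X_{\dim X}=X_1$, the moduli space of deformations of $X$, which is a piece of $\overline{\mathcal M}_{g,n}$ or of its coarse space $\overline M_{g,n}$; the rigidity results established here --- extending Hacking's $H^1(\overline{\mathcal M}_{g,n},T)=0$ to positive characteristic for $g=0$ and to the coarse moduli spaces --- are precisely the assertion of the conjecture in this case, so the conjecture is confirmed whenever $\dim X=1$.

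I expect two steps to be the genuine obstacles. The first is foundational: for $i\ge 2$ the iterated spaces $X_i$ are typically singular, often non-separated or essentially stacky, and generically obstructed, so that ``deformations of $X_i$'' must be interpreted through the (truncated) cotangent complex and on the stack rather than on the coarse space --- exactly the stack-versus-coarse-space distinction this paper is at pains to track --- before the Kodaira--Spencer identification of $T_{X_{i+1}}$ even makes sense. The second and deeper obstacle is that no mechanism is known which forces $H^1(T)$ to vanish after exactly $\dim X$ steps and not before; absent such a structural input the induction has no terminating clause, which is why the conjecture remains open in general and is verified here only in the curve case.
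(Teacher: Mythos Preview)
The statement you are addressing is a \emph{conjecture} of Kapranov; the paper quotes it only as motivation and makes no attempt to prove it, so there is no proof in the paper to compare your proposal against. Your proposal is not a proof either, and you effectively concede this in your final paragraph: you outline a strategy based on the Kodaira--Spencer identification $T_{X_{i+1}}\cong R^1(\pi_i)_*T_{\mathcal U_i/X_{i+1}}$ and Leray spectral sequences, and then correctly list the two reasons it does not go through --- the foundational problems with the iterated spaces $X_i$ for $i\ge 2$ (singular, stacky, obstructed), and the absence of any mechanism forcing $H^1(T)$ to vanish after exactly $\dim X$ steps. That is a fair account of why the problem is open, but it is a discussion of obstacles, not a proof attempt.

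One further caution on your claimed base case. You assert that for $\dim X=1$ the conjecture is confirmed by the rigidity of $\overline{\mathcal M}_{g,n}$ proved here and in \cite{Hac}. This is indeed the interpretation the paper suggests when it calls the rigidity of $\overline{\mathcal M}_{g,n}$ ``a special case'' of the conjecture, but note that it already involves a choice: one must take $X_1$ to be the \emph{compactified} moduli stack rather than the open $\mathcal M_g$ or a formal germ near $[X]$, and the rigidity arguments depend essentially on properness through Kodaira-type vanishing. Kapranov's informal phrase ``moduli space of deformations of $X$'' does not pin this down, so even the curve case rests on a particular reading of the statement rather than being a theorem one can invoke unambiguously.
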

Over a field of characteristic zero the rigidity of the stack $\overline{\mathcal{M}}_{g,n}$ has been proven by P. Hacking in \cite[Theorem 2.1]{Hac}. Furthermore, by \cite[Theorems 2.3]{Hac}, the coarse moduli space $\overline{M}_{g,n}$ does not have non-trivial, locally trivial deformations if $(g,n)\notin\{(1,2),(2,0),(2,1),(3,0)\}$. 

In Section \ref{hacking} we review Hacking's proof and explain why it fails to extend to positive characteristic; the main problem is the lack of a suitable version of Kodaira vanishing. We then show that the requested vanishing can be proven directly in genus zero, thus establishing the rigidity of $\overline M_{0,n}^K=\overline {\mathcal M}_{0,n}^K$
for any field $K$, and hence deducing that its automorphism group if $S_n$ for $n\geq 5$.

Finally, in Section \ref{rcms} we address a question Hacking left open, namely the rigidity of the scheme $\overline M_{g,n}^{K}$ when $K$ is an algebraically closed field of characteristic zero. We show that the Kuranishi family of $\overline M_{1,2}^{K}$ is non-singular of dimension six and that the general deformation is smooth; for $g+n>4$ we show that $\overline M_{g,n}^{K}$ is rigid. This provides an interesting example of a rigid scheme whose singularities are not locally rigid. The main result in \cite{Hac} together with Theorems \ref{rig}, \ref{loctriv}, \ref{nonrig}, \ref{rigcms} can be summarized in the following statement:
\begin{theoremn}
Over any field $K$ the moduli space $\overline{M}_{0,n}^{K}$ is rigid for any $n\geq 3$. 
If $K$ is an algebraically closed field of characteristic zero then the stack $\overline{\mathcal{M}}_{g,n}^K$ is rigid, and its coarse moduli space $\overline{M}_{g,n}^K$ is rigid if $g+n>4$ and does not have locally trivial deformations if $g+n\geq 4$.\\
Finally, the coarse moduli space $\overline{M}_{1,2}^K$ does not have locally trivial deformations, while its family of first order infinitesimal deformations is non-singular of dimension six and the general deformation is smooth.   
\end{theoremn}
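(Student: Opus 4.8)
I would split the statement into its four constituent theorems and treat them in increasing order of difficulty. In every case the basic object is $\Ext^1(\Omega_X,\mathcal O_X)$, the space of first-order deformations of a scheme $X$, and the fundamental tool is the local-to-global exact sequence
$$0\to H^1(\mathcal T_X)\to \Ext^1(\Omega_X,\mathcal O_X)\xrightarrow{\rho} H^0(\mathcal T^1_X)\xrightarrow{\delta} H^2(\mathcal T_X),$$
where $\mathcal T_X$ is the tangent sheaf and $\mathcal T^1_X:=\Exts^1(\Omega_X,\mathcal O_X)$ is the sheaf of local deformations, supported on $\Sing(X)$. Locally trivial deformations are exactly $H^1(\mathcal T_X)$, so Theorem \ref{loctriv} amounts to the vanishing $H^1(\mathcal T_X)=0$ for $g+n\geq 4$; since the exceptional pairs in \cite[Theorem 2.3]{Hac} all satisfy $g+n\leq 3$, this is essentially a repackaging of Hacking's computation, with the few small cases checked directly. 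The three substantial statements are the characteristic-free rigidity in genus zero, the rigidity of the coarse space for $g+n>4$, and the explicit Kuranishi family of $\overline M_{1,2}^K$.

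For Theorem \ref{rig} I would prove $H^1(\overline M_{0,n}^K,T_{\overline M_{0,n}^K})=0$ directly over an arbitrary field $K$, using that in genus zero the stack is representable and coincides with the smooth projective coarse space. The engine is the forgetful morphism $\pi\colon\overline M_{0,n}^K\to\overline M_{0,n-1}^K$, which realizes the source as the universal curve over the target, together with the relative tangent sequence
$$0\to T_\pi\to T_{\overline M_{0,n}^K}\to \pi^*T_{\overline M_{0,n-1}^K}\to 0$$
in its logarithmic form along the universal sections and nodes. Because the fibres are trees of $\mathbb P^1$'s one has $R^1\pi_*\mathcal O=0$, so the term $\pi^*T_{\overline M_{0,n-1}^K}$ feeds back exactly the cohomology $H^i(\overline M_{0,n-1}^K,T_{\overline M_{0,n-1}^K})$, while the relative term is computed fibrewise from twists of the tangent sheaf of nodal rational curves. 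An induction on $n$, with base cases $\overline M_{0,3}^K$ a point, $\overline M_{0,4}^K\cong\mathbb P^1$ and $\overline M_{0,5}^K$ verified by hand, then yields the vanishing. The decisive feature is that every vanishing invoked has the shape $H^1(\mathbb P^1,\mathcal O(d))=0$ for $d\geq -1$ and its relative analogues; none of it uses Kodaira vanishing, and this is precisely the step at which Hacking's characteristic-zero input is replaced by a characteristic-free argument.

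For Theorem \ref{rigcms}, over an algebraically closed field of characteristic zero, the stack rigidity is Hacking's theorem and $H^1(\mathcal T_X)=0$ holds by Theorem \ref{loctriv}; since then $\rho$ is injective, the rigidity of $X=\overline M_{g,n}^K$ reduces to the vanishing $H^0(\mathcal T^1_X)=0$. The sheaf $\mathcal T^1_X$ is supported on the locus of curves whose automorphism group acts without pseudo-reflections on the deformation space, and by Luna's slice theorem the transverse singularity along such a stratum $Z$ is a quotient singularity of dimension $\codim Z$. Schlessinger's theorem makes $\mathcal T^1_X$ vanish along every stratum of codimension at least three, so only the codimension-two strata survive; along these the transverse singularities are surface cyclic quotient singularities, which are \emph{not} rigid, so $\mathcal T^1_X$ is a nonzero sheaf and the singularities of $X$ are not locally rigid. \emph{The crux is to prove that this nonzero sheaf has no global sections}: when $g+n>4$ the strata $Z$ are positive-dimensional, and I would compute $\mathcal T^1_X|_Z$ as an explicit sheaf built from the local $T^1$ of the transverse cyclic quotient singularity, twisted by the monodromy of the family of singular germs along $Z$, and show that this twisting forces $H^0(Z,\mathcal T^1_X|_Z)=0$. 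Together with $H^1(\mathcal T_X)=0$ this gives $\Ext^1(\Omega_X,\mathcal O_X)=0$, exhibiting $X$ as a rigid scheme whose singularities are not locally rigid.

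Finally, Theorem \ref{nonrig} concerns the surface $X=\overline M_{1,2}^K$, where the same sequence behaves oppositely. Here I would work with an explicit model of this rational surface, first establishing $H^1(\mathcal T_X)=0$ (the absence of locally trivial deformations that Hacking's hypotheses excluded) and showing that the connecting map $\delta$ vanishes, which I expect to follow from $H^2(\mathcal T_X)=0$, so that $\rho$ becomes an isomorphism $\Ext^1(\Omega_X,\mathcal O_X)\cong H^0(\mathcal T^1_X)$. I would then locate the singular points, identify each as a cyclic quotient surface singularity of definite type coming from the automorphisms of the special fibres, and sum the local contributions $T^1$ to obtain $\dim H^0(\mathcal T^1_X)=6$. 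Since cyclic quotient surface singularities are unobstructed and smoothable, the global obstructions in $\Ext^2(\Omega_X,\mathcal O_X)$ vanish, the Kuranishi family is smooth of dimension six, and a general fibre simultaneously smooths all the singularities. The labour here is the explicit determination of the singular points and their local invariants; once these are in hand the assembly of the Kuranishi family is formal. The contrast with Theorem \ref{rigcms} is that the singular points of $\overline M_{1,2}^K$ form zero-dimensional strata, so $H^0(\mathcal T^1_X)$ is the full direct sum of the local $T^1$'s and cannot vanish, whereas along the positive-dimensional strata of the higher-dimensional coarse spaces the very same local deformations are killed by the global sections functor.
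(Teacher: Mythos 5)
Your overall skeleton (the local-to-global sequence $0\to H^1(T_X)\to\Ext^1(\Omega_X,\mathcal O_X)\to H^0(\mathcal{E}xt^1(\Omega_X,\mathcal O_X))\to H^2(T_X)$ driving all four statements) coincides with the paper's, and citing \cite[Theorem 2.3]{Hac} for Theorem \ref{loctriv} is legitimate since the exceptional pairs there all have $g+n\leq 3$ (the paper reproves it via canonical stacks, but your shortcut is valid). The serious gap is in Theorem \ref{rigcms}. Everything you say up to the crux --- support of $\mathcal{E}xt^1$ on $\Sing$, Schlessinger-type rigidity killing codimension $\geq 3$ strata, transversal surface quotient singularities along codimension-two strata --- matches the paper, but the entire difficulty of the theorem lives in your sentence ``show that this twisting forces $H^0(Z,\mathcal T^1_X|_Z)=0$'', which you assert rather than prove. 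Positive-dimensionality of the strata gives nothing by itself: one must identify the bundle whose pushforward is $\mathcal{E}xt^1(\Omega_X,\mathcal O_X)$ and then invoke a positivity theorem to kill its sections. The paper does this by (i) enumerating the four codimension-two strata $Z_4,Z_6,Y,W$ (Proposition \ref{sing}); (ii) proving structure results for transversal $A_1$ and transversal $\frac{1}{3}(1,1)$ singularities (Theorem \ref{211}, Corollary \ref{311}) via degeneration to the normal cone, identifying $\mathcal{E}xt^1$ with the pushforward of an explicit power of the normal bundle of the stratum in the canonical stack; (iii) translating that normal bundle into psi-classes, including a factor of two coming from the ramification of $\overline{\mathcal M}_{1,1}\times\overline{\mathcal M}_{g-1,n+1}\to\Delta_1^{can}$; and (iv) applying Kodaira vanishing to the nef and big psi-classes. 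This mechanism --- which is also exactly where characteristic zero enters --- or some substitute for it is what your proposal is missing; without it the key vanishing is a restatement of the goal.

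For Theorem \ref{rig} your inductive route via the forgetful morphisms is genuinely different from the paper's, which instead combines Hacking's reduction (Proposition \ref{reducetolinebundles}) with Kapranov's blow-up $f:\overline{M}_{0,n+1}\to\mathbb{P}^{n-2}$, $f^*\mathcal{O}_{\mathbb{P}^{n-2}}(1)\cong\psi_{n+1}$, to compute all cohomology of $\psi_{n+1}^\vee$ and of $\mathcal N$ directly on projective spaces and their products. Your induction can be made to work, but as written it has a flaw: the sequence $0\to T_\pi\to T_{\overline M_{0,n}}\to\pi^*T_{\overline M_{0,n-1}}\to 0$ is not exact for a nodal fibration (the differential fails to be surjective at the nodes of the fibres), and its correct logarithmic version $0\to\omega_\pi(\Sigma)^\vee\to T_{\overline M_{0,n}}(-\logpol \mathcal B)\to\pi^*T_{\overline M_{0,n-1}}(-\logpol\mathcal B)\to 0$ makes the induction run on the \emph{log} tangent sheaf, i.e. it proves the paper's Corollary \ref{loctri}, not Theorem \ref{rig}. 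You then still owe the passage from $T(-\logpol\mathcal B)$ to $T$ via $0\to T(-\logpol\mathcal B)\to T\to\nu_*\mathcal N\to 0$, i.e. the vanishing $H^1(\mathcal B^\nu,\mathcal N)=0$ with $\mathcal N=\psi^\vee\boxtimes\psi^\vee$ on products $\overline M_{0,n_1+1}\times\overline M_{0,n_2+1}$; this is not hard (K\"unneth plus $H^0(\overline M_{0,m},\psi^\vee)=0$), but it is absent from your sketch and is not of the shape $H^1(\mathbb P^1,\mathcal O(d))=0$. Finally, in Theorem \ref{nonrig} your justification for unobstructedness --- ``cyclic quotient surface singularities are unobstructed and smoothable'' --- is false as a general statement: the cone over the rational normal quartic, of type $\frac{1}{4}(1,1)$, has a reducible, hence singular, versal deformation space. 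It happens to be true for the singularities actually occurring on $\overline M_{1,2}$ ($A_1$, $A_2$, and the cone over the twisted cubic, which are hypersurface or codimension-two determinantal singularities); the paper avoids the issue by checking $\Ext^2(\Omega_R,R)=0$ locally with explicit free resolutions and feeding this, together with $H^2(T)=0$ and the zero-dimensionality of the support of $\mathcal{E}xt^1$, into the local-to-global spectral sequence.
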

Finally, we would like to mention that recently the non-commutative rigidity of $\overline{\mathcal{M}}_{g,n}$, in characteristic zero, has been proven by means of the vanishing of the second Hochschild cohomology by S. Okawa and T. Sano in \cite{OS}.   

\section{Automorphisms and rigidity}\label{autrig}

First of all, we need some elementary results on the behavior of automorphisms with respect to field extensions. In the following by orbifold we always mean a non-singular Deligne-Mumford stack with trivial general stabilizer; this assumption implies that the automorphism groupoid is rigid, hence can be identified with a group.
  
\begin{Remark}\label{l1}
Let $K\subseteq L$ be a field extension, $X_{K}$ a scheme, or an orbifold, over $K$ and $X_L := X_{K}\times_{\Spec(K)}\Spec(L)$. Any morphism $f_{K}:X_{K}\rightarrow X_{K}$ over $K$ induces a unique morphism $f_{L}:X_{L}\rightarrow X_{L}$ over $L$ by the universal property of the fiber product applied to the following commutative diagram.
\[
  \begin{tikzpicture}[xscale=1.5,yscale=-1.2]
    \node (A0_0) at (0, 0) {$X_{L}$};
    \node (A0_2) at (2, 0) {$X_{L}$};
    \node (A1_3) at (3, 1) {$X_{K}$};
    \node (A1_5) at (5, 1) {$X_{K}$};
    \node (A2_1) at (1, 2) {$\Spec(L)$};
    \node (A2_4) at (4, 2) {$\Spec(K)$};
    \path (A0_2) edge [->] node [auto] {$\scriptstyle{}$} (A1_5);
    \path (A0_0) edge [->] node [auto] {$\scriptstyle{}$} (A1_3);
    \path (A1_5) edge [->] node [auto] {$\scriptstyle{}$} (A2_4);
    \path (A0_0) edge [->,dashed] node [auto] {$\scriptstyle{f_{L}}$} (A0_2);
    \path (A2_1) edge [->] node [auto] {$\scriptstyle{}$} (A2_4);
    \path (A1_3) edge [->] node [auto] {$\scriptstyle{}$} (A2_4);
    \path (A0_2) edge [->] node [auto] {$\scriptstyle{}$} (A2_1);
    \path (A1_3) edge [->] node [auto] {$\scriptstyle{f_{K}}$} (A1_5);
    \path (A0_0) edge [->] node [auto] {$\scriptstyle{}$} (A2_1);
  \end{tikzpicture}
  \]
\end{Remark}

\begin{Lemma}\label{c1}
Let $K\subseteq L$ be a field extension, $X_{K}$ a scheme over $K$ and $X_L := X_{K}\times_{\Spec(K)}\Spec(L)$. Then there exists an injective morphism of groups
$$\chi:\Aut(X_{K})\rightarrow\Aut(X_{L})$$
\end{Lemma}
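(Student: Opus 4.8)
The map $\chi$ will simply send an automorphism to its base change, so the whole argument is organised around Remark \ref{l1}. First I would define $\chi(f_K):=f_L$, the morphism that Remark \ref{l1} attaches to $f_K$; since that remark guarantees $f_L$ exists and is \emph{unique} with the stated property, this is a well-defined assignment on all of $\Hom_K(X_K,X_K)$, not just on the invertible elements.

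Next I would check that $\chi$ is a homomorphism of monoids, which is where the uniqueness clause in Remark \ref{l1} does all the work. The identity $\id_{X_L}$ is an $L$-morphism satisfying the defining property of $(\id_{X_K})_L$, so $\chi(\id_{X_K})=\id_{X_L}$; and for any $f_K,g_K$ the composite $f_L\circ g_L$ is again an $L$-morphism making the base-change square commute, hence by uniqueness it equals $(f_K\circ g_K)_L$, i.e. $\chi(f_K\circ g_K)=\chi(f_K)\circ\chi(g_K)$. Applying this to a pair $f_K,f_K^{-1}$ shows $\chi(f_K)$ is invertible whenever $f_K$ is, so $\chi$ restricts to a group homomorphism $\Aut(X_K)\to\Aut(X_L)$, as required.

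The only non-formal step is injectivity, and this is where I expect the real content to sit. Let $\pi\colon X_L\to X_K$ denote the projection. It is the base change of $\Spec(L)\to\Spec(K)$, which is faithfully flat ($L$ is free, hence flat, over $K$, and the map is trivially surjective) and quasi-compact (it is affine); both properties are stable under base change, so $\pi$ is faithfully flat and quasi-compact, and therefore an epimorphism of schemes. By the characterisation in Remark \ref{l1}, $f_L$ satisfies $\pi\circ f_L=f_K\circ\pi$. Hence if $\chi(f_K)=\chi(g_K)$, that is $f_L=g_L$, then $f_K\circ\pi=g_K\circ\pi$, and cancelling the epimorphism $\pi$ yields $f_K=g_K$. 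Thus $\chi$ is injective. Everything is formal except this last cancellation, whose only input is the faithful flatness of the field extension $K\subseteq L$.
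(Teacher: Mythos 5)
Your proof is correct, and the formal part (defining $\chi$ via Remark \ref{l1} and using its uniqueness clause to get compatibility with identities and composition) matches what the paper intends; indeed you spell out the monoid-homomorphism check that the paper dismisses as "an immediate consequence". Where you genuinely diverge is injectivity. The paper argues at the level of the kernel: assuming $\phi_L=\id_{X_L}$, it observes that being the identity is local, reduces to an affine chart $X=\Spec(A)$, and then uses pure linear algebra -- the $K$-linear map $f-\id_A$ vanishes after $\otimes_K L$, hence vanishes, since $L$ is faithfully flat (indeed free) over $K$. You instead invoke the global statement that a faithfully flat quasi-compact morphism is an epimorphism in the category of schemes, and cancel $\pi\colon X_L\to X_K$ from $f_K\circ\pi=g_K\circ\pi$. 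This is a valid and standard consequence of fpqc descent (descent of morphisms gives injectivity of $\Hom(X_K,Z)\to\Hom(X_L,Z)$ for every $Z$), and your hypotheses on $\pi$ are verified correctly: flatness because $L$ is free over $K$, surjectivity and quasi-compactness because the map is a surjective affine morphism, all stable under base change. The trade-off: your argument is more general (it needs no group structure, so it shows the full endomorphism monoid injects, and it works verbatim for any fpqc base change, not just field extensions), but it rests on the comparatively heavy descent theorem; the paper's argument is more elementary and self-contained, needing only that $-\otimes_K L$ is faithful on $K$-vector spaces. Both are complete proofs.
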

\begin{proof} The morphism is an immediate consequence of Remark \ref{l1}. To prove injectivity,
let $\phi_{K}\in \Aut(X_{K})$ be an automorphism such that the induced automorphism $\phi_{L}\in\Aut(X_{L})$ is the identity; we need to show the $\phi_K$ is also the identity. Being the identity is local in the \'etale topology, so we may assume $X=\Spec(A)$ with $A$ a $K$-algebra and $\phi_K$ induced by $f\in \Aut(A)$. We have that $f-\id_A$ is a $K$-linear automorphism of $A$ which is zero when tensored with $L$, therefore it is zero.
\end{proof}

In the rest of this section $A$ will be a local ring with maximal ideal $\mathfrak{m}$ and residue field $K$, which is complete in the $\mathfrak{m}$-adic topology. 
Let $X$ be a Deligne-Mumford stack and $f:X\to \Spec(A)$ be a proper, flat morphism. Let $A_n:=A/\mathfrak{m}^{n+1}$, and $X_n:=X\times_{\Spec(A)}\Spec(A_n)$.  Write $\Aut(X_n/A_n)$ for the automorphism group of $X_n$ over $\Spec(A_n)$. Let $\xi$ be the generic point of $\Spec(A)$ and $X_\xi$ the corresponding fiber of $X$. Assume that $X_0$ is either a scheme or an orbifold.

\begin{Lemma}\label{liftaut} Assume that $X_0$ is smooth, then for every $n\ge 0$ there is an exact sequence of sets
$$0\rightarrow H^{0}(X_{0},T_{X_{0}})\otimes \mathfrak{m}^{n}/\mathfrak{m}^{n+1}\rightarrow \Aut(X_{n+1})\rightarrow\Aut(X_{n})\rightarrow H^{1}(X_{0},T_{X_{0}})\otimes \mathfrak{m}^{n}/\mathfrak{m}^{n+1}$$
where $\alpha:\Aut(X_{n+1})\rightarrow\Aut(X_{n})$ is the restriction map. In particular, if $H^i(X_0,T_{X_0})=0$ for $i=0,1$, then $\Aut(X/A)$ is formally \'etale over $A$.
\end{Lemma}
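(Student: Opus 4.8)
The plan is to establish the exact sequence by comparing automorphisms of $X_{n+1}$ with those of $X_n$ via deformation theory, exactly as one does for lifting sections of a sheaf across a square-zero extension. The key point is that $X_{n+1}\to X_n$ is a square-zero thickening: since $\mathfrak m^{n+1}/\mathfrak m^{n+2}$ is killed by $\mathfrak m$, the ideal $I:=\mathfrak m^{n+1}/\mathfrak m^{n+2}\subseteq A_{n+1}$ defining $A_n$ inside $A_{n+1}$ satisfies $I^2=0$, and as an $A_0=K$-module it is canonically $\mathfrak m^{n}/\mathfrak m^{n+1}$ (or rather $\mathfrak m^{n+1}/\mathfrak m^{n+2}$; I would fix the indexing carefully against the statement, noting the likely convention $\mathfrak m^{n}/\mathfrak m^{n+1}$ there). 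Because $X$ is flat over $A$, the thickening $X_n\hookrightarrow X_{n+1}$ is a square-zero extension of Deligne--Mumford stacks whose ideal sheaf is $\mathcal O_{X_0}\otimes_K I$, and smoothness of $X_0$ lets me replace the relevant cotangent complex by $\Omega_{X_0}$, so the controlling cohomology is $H^i(X_0,T_{X_0})\otimes_K I$.

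\textbf{The construction of the maps.} First I would define the restriction map $\alpha\colon\Aut(X_{n+1}/A_{n+1})\to\Aut(X_n/A_n)$ by base change along $A_{n+1}\twoheadrightarrow A_n$; this is a group homomorphism. Next I would identify the kernel: an automorphism of $X_{n+1}$ restricting to the identity on $X_n$ is precisely an automorphism of the trivial square-zero extension inducing the identity modulo $I$, and the standard infinitesimal-automorphism calculation identifies the set of such with $H^0(X_0,T_{X_0})\otimes_K I$ via the assignment sending an automorphism $\phi$ to the derivation $\phi-\mathrm{id}$ (which lands in global vector fields tensored with the square-zero ideal). This shows exactness at the first two spots and that the kernel carries its natural additive group structure. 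Finally, for a given $\psi\in\Aut(X_n/A_n)$ I would construct the obstruction in $H^1(X_0,T_{X_0})\otimes_K I$ measuring whether $\psi$ lifts to $X_{n+1}$: locally lifts always exist (smoothness of the thickening), the differences of local lifts are torsors under $H^0$-type local data, and the gluing obstruction is a \v{C}ech $1$-cocycle valued in $T_{X_0}\otimes_K I$ whose class vanishes iff a global lift exists. This gives exactness at $\Aut(X_n)$ as an exact sequence of pointed sets.

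\textbf{The main obstacle} I expect is bookkeeping rather than conceptual: making the deformation-theoretic identifications functorial and canonical for Deligne--Mumford stacks (not just schemes), so that the ``exact sequence of sets'' is genuinely exact at each spot with the asserted group structure on the kernel, and ensuring the obstruction map is well defined independent of local choices. I would lean on the standard theory of deformations of smooth DM stacks under square-zero base extensions, where the tangent--obstruction theory is governed by $H^i(X_0,T_{X_0})$ because the cotangent complex of a smooth stack is $\Omega_{X_0}$ in degree zero; the orbifold hypothesis on $X_0$ guarantees the automorphism groupoid is rigid so these automorphism \emph{groups} are well defined. The final assertion is then immediate: if $H^0(X_0,T_{X_0})=H^1(X_0,T_{X_0})=0$, the sequence forces $\Aut(X_{n+1})\xrightarrow{\sim}\Aut(X_n)$ for all $n$, so every automorphism of the closed fiber lifts uniquely through all infinitesimal neighborhoods, and passing to the limit over $n$ (using $\mathfrak m$-adic completeness of $A$) shows $\Aut(X/A)$ is formally \'etale over $A$.
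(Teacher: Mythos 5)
Your proposal is correct, but it takes a more self-contained route than the paper does. The paper's entire proof is a citation: it invokes Illusie's general result on lifting automorphisms across square-zero extensions (\cite[Proposition III.2.2.2]{Ill}), with the single observation that smoothness of $X$ over $\Spec(A)$ gives $L_{X/A}=\Omega_{X/A}$, so the tangent--obstruction spaces become $H^{i}(X_{0},T_{X_{0}})\otimes I$. What you do instead is re-derive the content of that proposition by hand: identifying $X_n\hookrightarrow X_{n+1}$ as a square-zero thickening, computing the kernel of restriction as global vector fields via $\phi\mapsto\phi-\id$, and building the obstruction as a \v{C}ech $1$-cocycle from local lifts. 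The trade-off is exactly the one you flag as your ``main obstacle'': carrying out the \v{C}ech argument and its gluing on a Deligne--Mumford stack (rather than a scheme) requires descent along an \'etale atlas plus rigidity of the automorphism groupoid, and this is precisely the bookkeeping that Illusie's formalism, which works for ringed topoi and arbitrary (not necessarily smooth) morphisms via the cotangent complex, dispenses with for free. Your hands-on version is more transparent and elementary; the paper's version is shorter and uniform across the stacky setting. One small point in your favor: your worry about indexing is justified --- the kernel of $A_{n+1}\twoheadrightarrow A_{n}$ is $\mathfrak{m}^{n+1}/\mathfrak{m}^{n+2}$ (since $A_n=A/\mathfrak{m}^{n+1}$), so the coefficient module written in the lemma as $\mathfrak{m}^{n}/\mathfrak{m}^{n+1}$ is itself off by one; this is harmless for all applications, and your treatment of the final formally-\'etale assertion matches the paper's intent (unique lifting through the tower of Artinian quotients).
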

\begin{proof} By exact sequence we mean that an element $f\in \Aut(X_n/A_n)$ is in the image of $\alpha$ if and only if its image in $H^{1}(X_{0},T_{X_{0}})\otimes \mathfrak{m}^{n}/\mathfrak{m}^{n+1}$ is zero, and in this case the set of inverse images is naturally a principal homogeneous space for the group $H^{0}(X_{0},T_{X_{0}})\otimes \mathfrak{m}^{n}/\mathfrak{m}^{n+1}$. The statement follows from \cite[Proposition III. 2.2.2]{Ill} by observing that since $X$ is smooth over $\Spec(A)$ the cotangent complex is $L_{X/A}=\Omega_{X/A}$.
\end{proof}

\begin{Remark} If we only assume that $X_0$ is reduced Lemma \ref{liftaut} still hold with $\Ext^1(\Omega_{X_0},\mathcal O_{X_0})$ in place of $H^{1}(X_{0},T_{X_{0}})$; this  is an easy and well-known computation. 
\end{Remark}

\begin{Definition} Let $X$ be a scheme over a field. We will say it is {\em rigid} if it has no non-trivial infinitesimal deformations; if $X$ is smooth, this is equivalent to $H^1(X,T_X)=0$ and if $X$ is generically reduced this is equivalent to $\Ext^1(\Omega_X,\mathcal O_X)=0$.
\end{Definition}

\begin{Theorem}\label{lift}
Assume that $X\rightarrow \Spec(A)$ is a proper, separated morphism of schemes, that $X_0\rightarrow \Spec(A/\mathfrak{m})$ is generically reduced and rigid, and that $H^0(X_\xi, T_{X_\xi})=0$. Then there is a natural injective morphism of groups
$$\chi: \Aut(X_0)\rightarrow \Aut(X_{\xi}).$$
\end{Theorem}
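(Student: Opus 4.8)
The plan is to factor the desired map through the automorphism group of the total family. Write $G=\Aut(X/A)$ and let $r_0\colon G\to\Aut(X_0)$ and $r_\xi\colon G\to\Aut(X_\xi)$ be the restrictions to the special and generic fibres. I would prove that $r_0$ is an isomorphism and that $r_\xi$ is injective, and then simply set $\chi=r_\xi\circ r_0^{-1}$. Naturality and the homomorphism property are then automatic, and injectivity of $\chi$ follows from that of $r_\xi$.

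For the surjectivity of $r_0$ I would feed rigidity into Lemma~\ref{liftaut}: since $X_0$ is generically reduced and rigid, $\Ext^1(\Omega_{X_0},\mathcal O_{X_0})=0$, so by the Remark following Lemma~\ref{liftaut} the obstruction term vanishes and every restriction map $\Aut(X_{n+1}/A_{n+1})\to\Aut(X_n/A_n)$ is surjective. A compatible system of lifts therefore exists for any $\phi_0\in\Aut(X_0)$; because $A$ is complete and $X$ is proper over $A$, formal GAGA (Grothendieck's existence theorem) identifies $G=\varprojlim_n\Aut(X_n/A_n)$, so the system algebraises to an element of $G$ restricting to $\phi_0$. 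For injectivity of $r_0$ I would use the other half of Lemma~\ref{liftaut}: once $H^0(X_0,T_{X_0})=0$ is known, the kernels $H^0(X_0,T_{X_0})\otimes\mathfrak m^{n}/\mathfrak m^{n+1}$ of the transition maps all vanish, so each transition map is bijective and hence $r_0$ is an isomorphism.

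The hard part is exactly the input $H^0(X_0,T_{X_0})=0$, since the hypothesis only furnishes $H^0(X_\xi,T_{X_\xi})=0$ on the generic fibre, and semicontinuity a priori runs the wrong way (it bounds $\dim H^0(X_0,T_{X_0})$ from below, not above). Here I would use rigidity a second time, now as a constancy statement: for $t\in\Spec(A)$ the function $t\mapsto\dim\Ext^1(\Omega_{X_t},\mathcal O_{X_t})$ is upper semicontinuous and vanishes at the closed point, hence, $\Spec(A)$ being local, it vanishes on all of $\Spec(A)$, so every fibre, and in particular $X_\xi$, is rigid as well. With the $\Ext^1$ term identically zero, cohomology and base change for the relative tangent sheaf (equivalently for the relative sheaves $\Exts^i(\Omega_{X/A},\mathcal O_X)$) shows that $f_*T_{X/A}$ is locally free and commutes with base change; its rank equals $\dim H^0(X_\xi,T_{X_\xi})=0$, whence $H^0(X_0,T_{X_0})=0$. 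This base-change step, carried out for the possibly singular, merely generically reduced fibres via the relative cotangent complex, is where I expect the real work to lie.

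Finally, for the injectivity of $r_\xi$ I would use flatness of $X$ over the domain $A$: on an affine chart $\Spec(B)\subseteq X$ the algebra $B$ is $A$-torsion free, so $B\hookrightarrow B\otimes_A\kappa(\xi)$, and an automorphism inducing the identity on $X_\xi$ must already be the identity on $X$. Combining the three steps, $r_0$ is an isomorphism and $r_\xi$ is injective, so $\chi=r_\xi\circ r_0^{-1}$ is a well-defined injective homomorphism, canonical because $r_0$ and $r_\xi$ are.
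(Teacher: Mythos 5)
Your proposal is correct and follows essentially the same route as the paper: unique lifting of automorphisms through the Artinian tower $\{X_n\}$ via Lemma \ref{liftaut} (in its $\Ext^1(\Omega_{X_0},\mathcal{O}_{X_0})$ form) together with rigidity, algebraization by Grothendieck's existence theorem, and injectivity from separatedness plus the density (torsion-freeness) of the generic fibre. The only difference is one of care rather than of strategy: where the paper disposes of the key input $H^0(X_0,T_{X_0})=0$ with the single phrase ``by semi-continuity'', you correctly observe that naive semicontinuity runs the wrong way and supply the actual mechanism --- upper semicontinuity of $t\mapsto\dim\Ext^1(\Omega_{X_t},\mathcal{O}_{X_t})$ on the local base $\Spec(A)$ forces rigidity of every fibre, after which base change for $T^0$ gives constancy of $h^0$ --- which is indeed how that step must be justified.
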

\begin{proof}
We have a family of infinitesimal deformations $\{X_n\}_{n\geq 0}$ where $X_n$ is a scheme over $A_n$. Since $X_0$ is rigid, by semi-continuity $H^0(X_\xi,T_{X_\xi})=0$ implies $H^0(X_0,T_{X_0})=0$. \\
Lemma \ref{liftaut} implies that any automorphism of $X_n$ lifts to a unique automorphism of $X_{n+1}$. Therefore, starting with an automorphism $\phi:=\phi_0\in\Aut(X_0)$ we may construct a family of automorphisms $\phi_{\bullet} = \{\phi_{n}\}_{n\geq 0}$, where $\phi_n$ is an automorphism of $X_n$. The direct limit 
$$\widehat{X} = \varinjlim_{n}X_{n}\rightarrow\fSpec(A)$$ 
is a formal scheme over the formal spectrum $\fSpec(A)$ of $A$. By \cite[Section 8.1.5]{FAG} $\phi_{\bullet}$ induces a unique automorphism $\widehat{\phi}:\widehat{X}\rightarrow\widehat{X}$, and by \cite[Corollary 8.4.7]{FAG} $\widehat{\phi}$ in turn induces an unique automorphism $\overline{\phi}$ of $X\rightarrow\Spec(A)$. Let $X_{\xi}$ be the general fiber of $X\rightarrow\Spec(A)$. By taking the restriction of $\overline{\phi}$ to $X_{\xi}$ we get a morphism of groups  
$$
\begin{array}{cccc}
\chi: & \Aut(X_0) & \longrightarrow & \Aut(X_{\xi})\\
 & \phi & \longmapsto & \overline{\phi}_{|X_{\xi}}
\end{array}
$$
Clearly if $\phi$ and $\psi$ are two automorphisms of $X_0$ inducing the same automorphism of $X_{\xi}$ then $\overline{\phi} = \overline{\psi}$ because $\xi$ is the general point of $\Spec(A)$ and $X$ is separated. In particular $\overline{\phi}$ and $\overline{\psi}$ coincide on the special fiber $X_0$, that is $\phi = \psi$. So the morphism $\chi$ is injective.
\end{proof}

We will need to following result in order to study the automorphism group of the stack $\overline{\mathcal{M}}_{g,n}$.

\begin{Proposition}\label{autstack}
Let $\mathcal{X}$ be a separated orbifold over an affine scheme $S$ and let $X$ be its coarse moduli space. Then there exists an injective morphism of groups
$$\chi:\Aut(\mathcal{X}/S)\rightarrow\Aut(X/S).$$
\end{Proposition}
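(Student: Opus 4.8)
The plan is to produce $\chi$ from the universal property of the coarse moduli space $\pi\colon\mathcal X\to X$, which by \cite{KM} is initial among $S$-morphisms from $\mathcal X$ to algebraic spaces. Given $\phi\in\Aut(\mathcal X/S)$, the composite $\pi\circ\phi\colon\mathcal X\to X$ is such a morphism, so it factors uniquely as $\pi\circ\phi=\bar\phi\circ\pi$ for a unique $S$-morphism $\bar\phi\colon X\to X$. Uniqueness of the factorisation makes the assignment $\phi\mapsto\bar\phi$ strictly functorial: $\overline{\phi\circ\psi}=\bar\phi\circ\bar\psi$ and $\overline{\id}=\id$, so $\bar\phi$ is invertible with inverse $\overline{\phi^{-1}}$, and $\chi\colon\phi\mapsto\bar\phi$ is a homomorphism into $\Aut(X/S)$.

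For injectivity, suppose $\chi(\phi)=\id_X$, that is $\pi\circ\phi=\pi$. Since $\mathcal X$ is an orbifold, its locus of trivial stabilisers is an open dense substack $\mathcal U\subseteq\mathcal X$ on which $\pi$ restricts to an isomorphism onto a dense open $U\subseteq X$; I will use that $\mathcal U$ is intrinsic to $\mathcal X$, hence preserved by the automorphism $\phi$. On $\mathcal U$ the two composites $\pi\circ(\phi|_{\mathcal U})$ and $\pi|_{\mathcal U}$ agree, and since $\pi|_{\mathcal U}$ is an isomorphism this forces $\phi|_{\mathcal U}=\id_{\mathcal U}$.

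It then remains to propagate this equality from the dense open $\mathcal U$ to all of $\mathcal X$. Because $\mathcal X$ is separated over $S$, its diagonal is a closed immersion, so the locus where the two $S$-morphisms $\phi,\id\colon\mathcal X\to\mathcal X$ become $2$-isomorphic---the fibre product of $(\phi,\id)\colon\mathcal X\to\mathcal X\times_S\mathcal X$ with the diagonal---is a closed substack $\mathcal Z\subseteq\mathcal X$. It contains the dense $\mathcal U$, and since $\mathcal X$ is reduced (being nonsingular) we get $\mathcal Z=\mathcal X$, whence $\phi\cong\id$. As the orbifold assumption makes the automorphism groupoid rigid, this means $\phi=\id$ in $\Aut(\mathcal X/S)$, proving injectivity.

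I expect the main obstacle to be precisely this last step: one must handle the $2$-categorical content carefully, confirming that being $2$-isomorphic on a dense open substack does upgrade to being $2$-isomorphic, and that the agreement locus is genuinely closed. The orbifold hypothesis---trivial general stabiliser, so that $\Aut(\mathcal X/S)$ really is a group with no stacky $2$-automorphisms to track---is exactly what keeps this bookkeeping under control; without it one would only obtain a statement about isomorphism classes up to the generic stabiliser.
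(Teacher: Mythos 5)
Your construction of $\chi$ and the first half of the injectivity argument follow the paper's proof exactly: the universal property of the coarse moduli space gives the homomorphism, and the generic-isomorphism locus $\mathcal U$ gives $\phi|_{\mathcal U}=\id_{\mathcal U}$. The gap is in the propagation step, precisely where you anticipated trouble. Your claim that separatedness of $\mathcal X$ makes its diagonal a closed immersion is false for any orbifold that is not an algebraic space: the fibre of $\Delta\colon\mathcal X\to\mathcal X\times_S\mathcal X$ over a point $(x,x)$ is the stabilizer group of $x$, so $\Delta$ is not even a monomorphism at the stacky points. Separatedness of a Deligne--Mumford stack gives a proper --- hence, being also quasi-finite and representable, finite --- diagonal. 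Consequently your fibre product $\mathcal Z$ is not a closed substack of $\mathcal X$: it is the stack $\Isom(\phi,\id)$, which comes with a finite representable morphism $\mathcal Z\to\mathcal X$ whose fibre over $x$ is the set of isomorphisms $\phi(x)\cong x$. The image of $\mathcal Z$ being closed and containing $\mathcal U$ only yields the pointwise statement $\phi(x)\cong x$ for all $x$, which is strictly weaker than a $2$-isomorphism $\phi\cong\id$, and reducedness of $\mathcal X$ cannot bridge this. Indeed the principle you invoke --- two morphisms from a reduced stack to a separated stack that are $2$-isomorphic on a dense open are $2$-isomorphic --- is false in general: the trivial $\mu_2$-torsor and the connected \'etale double cover of a nodal cubic $C$ give two morphisms $C\to B\mu_2$ that are $2$-isomorphic on the dense smooth locus but not globally, even though $C$ is reduced.

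The correct repair uses smoothness (hence normality) of $\mathcal X$, not just reducedness. The $2$-isomorphism $\phi|_{\mathcal U}\cong\id_{\mathcal U}$ is exactly a section of the finite morphism $\mathcal Z\to\mathcal X$ over $\mathcal U$; the closure of its image in $\mathcal Z$ is finite and birational onto $\mathcal X$, and since $\mathcal X$ is normal this closure maps isomorphically onto $\mathcal X$ (Zariski's main theorem, checked on an \'etale atlas), so the section extends globally, giving $\phi\cong\id$. Your final observation --- that the orbifold hypothesis rigidifies the automorphism groupoid, upgrading $\phi\cong\id$ to $\phi=\id$ in $\Aut(\mathcal X/S)$ --- then correctly finishes the proof. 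For comparison, the paper compresses all of this into the single phrase ``by separateness this implies that $\phi=\id_{\mathcal X}$'', so the step you tried to make explicit is genuinely the crux; but the justification you gave for it, as written, does not stand.
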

\begin{proof}
Since the natural map $\pi:\mathcal{X}\rightarrow X$ is universal for morphisms in schemes for any $\phi\in \Aut(\mathcal{X})$ we get a unique $\tilde{\phi}\in \Aut(X)$ commuting with $\pi$. This correspondence induces the morphism $\chi$. The orbifold condition implies that there is an open substack $U$ of $\mathcal X$ which is isomorphic to its image in $X$; if $\phi\in \Aut(X/S)$ satisfies $\chi(\phi)=\id_X$ it follows that $\phi|_U=\id_U$ and by separateness this implies that $\phi=\id_{\mathcal X}$.
\end{proof}

\section{A review of Hacking's proof of rigidity in characteristic zero}\label{hacking}

We begin this section working over the integers. We will point out when we pass to characteristic zero. Let $X$ be a smooth algebraic stack, $i:D\to X$ the closed embedding of a normal crossing divisor, nc for short, $\nu:D^\nu\to D$ its normalization; by definition of nc, $D^\nu$ is smooth.\\
The composition $\bar\nu:=i\circ\nu:D^\nu\to X$ is an immersion, that is the differential has maximum rank at every point, hence it has a normal line bundle $\mathcal N$, defined by the exact sequence $$
0\mapsto T_{D^\nu}\to \bar\nu^*(T_X)\to \mathcal N\to 0$$
The vector bundle of logarithmic differentials $T_X(-\log D)$ is defined by the exact sequence $$
0\mapsto T_X(-\log D)\to T_X\to \bar\nu_*(\mathcal N)\to 0$$
Let $\pi:\mathcal U_{g,n}\to \overline{\mathcal M}_{g,n}$ be the universal curve. Let $\mathcal B$ be the boundary of $\overline{\mathcal M}_{g,n}$; it is a normal crossing divisor. Its normalization $\mathcal B^\nu$ is the disjoint union of products $\overline{\mathcal M}_{g_1,n_1+1}\times \overline{\mathcal M}_{g_2,n_2+1}$ with $g_1+g_2=g$ and $n_1+n_2=n$, together with $\overline{\mathcal M}_{g-1,n+2}$ by \cite[Corollary 3.9]{Kn2}. The restriction of $\mathcal N$ to each of these components is $p_1^*\psi^\vee_{n_1+1}\otimes p_2^*\psi^\vee_{n_2+1}$ and $\psi^\vee_{n+1}\otimes\psi^\vee_{n+2}$, respectively.
Let $\Sigma\subset \mathcal U_{g,n}$ be the union of the images of the $n$ sections of $\pi$, this is an nc divisor.

\begin{Lemma}\cite[Lemma 3.1]{Hac} There is an isomorphism $T_{\overline{\mathcal M}_{g,n}}(-\log \mathcal B)\to R^1\pi_*(\omega_\pi(\Sigma)^\vee)$.
\end{Lemma}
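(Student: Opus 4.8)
The plan is to realise both sides as pushforwards from the universal curve and to compare them through the Kodaira--Spencer map. Deformation theory of pointed nodal curves gives a canonical isomorphism of the tangent sheaf $T_{\overline{\mathcal M}_{g,n}}$ with the relative $\mathcal Ext$-sheaf $\Exts^1_\pi(\Omega_\pi(\Sigma),\mathcal O)$, where $\Omega_\pi$ is the sheaf of relative K\"ahler differentials and I write $\mathcal O$ for $\mathcal O_{\mathcal U_{g,n}}$; at a point $[C,p_\bullet]$ this recovers the classical identification of the tangent space with $\Ext^1(\Omega_C(\Sigma),\mathcal O_C)$. I would feed this into the low-degree terms of the relative local-to-global spectral sequence,
\[
0\to R^1\pi_*\,\mathcal{H}om(\Omega_\pi(\Sigma),\mathcal O)\to \Exts^1_\pi(\Omega_\pi(\Sigma),\mathcal O)\to \pi_*\,\Exts^1(\Omega_\pi(\Sigma),\mathcal O)\to 0,
\]
where the term $R^2\pi_*$ vanishes because the fibres are curves.

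The crucial local computation is the identification $\mathcal{H}om(\Omega_\pi(\Sigma),\mathcal O)\cong\omega_\pi(\Sigma)^\vee$. Here it is essential to dualise over the \emph{total space} $\mathcal U_{g,n}$, which is smooth, rather than fibrewise: near a node the family is \'etale-locally $xy=t$ inside a smooth surface, $\Omega_\pi$ is the cokernel of $\mathcal O\xrightarrow{(y,x)}\mathcal O^2$, and since $\mathcal O$ is a regular two-dimensional ring one finds $\mathcal{H}om(\Omega_\pi,\mathcal O)=\mathcal O\cdot(x\partial_x-y\partial_y)$, a line bundle generated by the node-preserving vector field. This is exactly $\omega_\pi^{-1}$; note its restriction to a fibre is the rank-one subsheaf of the non-locally-free fibrewise dual $\mathcal{H}om(\Omega_C,\mathcal O_C)$, which is why working relatively produces a genuine line bundle. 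Twisting by $-\Sigma$ at the disjoint smooth marked sections gives $\mathcal{H}om(\Omega_\pi(\Sigma),\mathcal O)=\omega_\pi^{-1}(-\Sigma)=\omega_\pi(\Sigma)^\vee$. By Riemann--Roch on the fibres one has $h^0(C,\omega_C^{-1}(-\Sigma))=0$ and $h^1=3g-3+n$, constant, so $R^1\pi_*(\omega_\pi(\Sigma)^\vee)$ is locally free of rank $3g-3+n$ and commutes with base change.

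Next I would identify the cokernel. The sheaf $\Exts^1(\Omega_\pi(\Sigma),\mathcal O)=\Exts^1(\Omega_\pi,\mathcal O)$ is supported on the relative singular locus; the same presentation shows it is the skyscraper $\mathcal O/(x,y)$ at each node, whose value is the one-dimensional smoothing space, canonically the tensor product of the tangent lines of the two branches. Pushing forward along $\pi$ this is precisely $\bar\nu_*(\mathcal N)$, with $\mathcal N=p_1^*\psi^\vee_{n_1+1}\otimes p_2^*\psi^\vee_{n_2+1}$ (respectively $\psi^\vee_{n+1}\otimes\psi^\vee_{n+2}$) the normal bundle of the boundary computed above. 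Hence the spectral sequence reads
\[
0\to R^1\pi_*(\omega_\pi(\Sigma)^\vee)\to T_{\overline{\mathcal M}_{g,n}}\to \bar\nu_*(\mathcal N)\to 0,
\]
which I would compare with the defining sequence $0\to T_{\overline{\mathcal M}_{g,n}}(-\log\mathcal B)\to T_{\overline{\mathcal M}_{g,n}}\to\bar\nu_*(\mathcal N)\to 0$.

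The last and most delicate step is to check that the two surjections onto $\bar\nu_*(\mathcal N)$ agree, so that the snake lemma yields the desired isomorphism of kernels. Concretely this means verifying that the Kodaira--Spencer image of a tangent vector, projected to the smoothing space of a node, coincides with its component normal to $\mathcal B$; this holds because a local equation for the branch of $\mathcal B$ along which that node persists is the smoothing parameter $t$, so $\partial_t$ maps to a generator of $\Exts^1$. This compatibility is where I expect the real work to lie. Finally, since every ingredient---nodes, the relative dualizing sheaf, and the one-dimensional smoothing space---is insensitive to the characteristic, the whole argument is valid over $\Spec(\mathbb Z)$ and hence over any field, as the later positive-characteristic applications require.
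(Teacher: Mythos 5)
The paper does not actually prove this lemma---it is quoted verbatim from Hacking with a citation---and your argument is essentially the proof in that cited source: the Kodaira--Spencer identification $T_{\overline{\mathcal M}_{g,n}}\cong\Exts^1_\pi(\Omega_\pi(\Sigma),\mathcal{O})$, the relative local-to-global Ext sequence (with $R^2\pi_*=0$ for relative dimension one), the local computation $\mathcal{H}om(\Omega_\pi(\Sigma),\mathcal{O})\cong\omega_\pi(\Sigma)^\vee$ at a node, the identification of $\pi_*\Exts^1(\Omega_\pi,\mathcal{O})$ with $\bar\nu_*(\mathcal N)$, and the matching of the resulting surjection with the one defining $T_{\overline{\mathcal M}_{g,n}}(-\log\mathcal B)$. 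Your proof is correct as written; the only slip is cosmetic: the local ring at a node of the universal curve is regular of dimension $3g-2+n$, not two, but the Koszul argument only needs $x,y$ to be a regular sequence (equivalently, one can pull back from the versal family $xy=t$ by flat base change), and your characteristic-independence remark is exactly why the lemma can be stated over $\mathbb{Z}$ as the paper requires.
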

\begin{Corollary} For every $k\ge 0$ we have $H^k(\mathcal U_{g,n}\omega_\pi(\Sigma)^\vee)=H^{k-1}(\overline{\mathcal M}_{g,n}, T_{\overline{\mathcal M}_{g,n}}(-\log \mathcal B))$.
\end{Corollary}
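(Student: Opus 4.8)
The plan is to deduce the statement from the preceding Lemma by means of the Leray spectral sequence for the universal curve $\pi\colon \mathcal U_{g,n}\to\overline{\mathcal M}_{g,n}$ applied to the sheaf $\omega_\pi(\Sigma)^\vee$. Concretely I would start from
$$E_2^{p,q}=H^p\bigl(\overline{\mathcal M}_{g,n}, R^q\pi_*(\omega_\pi(\Sigma)^\vee)\bigr)\Longrightarrow H^{p+q}\bigl(\mathcal U_{g,n}, \omega_\pi(\Sigma)^\vee\bigr),$$
so that the whole problem reduces to identifying the higher direct image sheaves $R^q\pi_*(\omega_\pi(\Sigma)^\vee)$ for every $q$.

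Since $\pi$ is proper with one-dimensional fibres, $R^q\pi_*=0$ for all $q\ge 2$ for free, and the Lemma identifies $R^1\pi_*(\omega_\pi(\Sigma)^\vee)$ with $T_{\overline{\mathcal M}_{g,n}}(-\log\mathcal B)$. The crux is therefore the vanishing of $R^0\pi_*(\omega_\pi(\Sigma)^\vee)=\pi_*(\omega_\pi(\Sigma)^\vee)$, which I would check fibrewise. On a geometric fibre, a stable $n$-pointed curve $C$ with marked points $\Sigma_C$, the restriction of $\omega_C(\Sigma_C)$ to each irreducible component $C_i$ has degree $2g_i-2+\#\{\text{nodes and marks on }C_i\}$, which is strictly positive by the very definition of stability; hence $\omega_C(\Sigma_C)^\vee$ has negative degree on every component and $H^0(C,\omega_C(\Sigma_C)^\vee)=0$. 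As this holds on all fibres, $\omega_\pi(\Sigma)^\vee$ is flat over the base (the fibres being Gorenstein, so that the relative dualizing sheaf commutes with base change), and $\overline{\mathcal M}_{g,n}$ is smooth hence reduced, Grauert's theorem forces the locally free sheaf $\pi_*(\omega_\pi(\Sigma)^\vee)$ to have rank zero, i.e. to vanish.

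With only the row $q=1$ surviving, the spectral sequence degenerates at $E_2$ and yields $H^{p+1}(\mathcal U_{g,n},\omega_\pi(\Sigma)^\vee)\cong H^p(\overline{\mathcal M}_{g,n}, T_{\overline{\mathcal M}_{g,n}}(-\log\mathcal B))$; writing $k=p+1$ gives exactly the asserted isomorphism, both sides being understood as zero for $k=0$. The only real obstacle is the vanishing of $\pi_*$, and there the single thing to be careful about is that stability gives strict positivity of the degree on every component, including rational tails and bridges in the most degenerate boundary strata; once that is in hand the rest is formal. I would also emphasise that, in contrast with the Kodaira-type vanishing invoked elsewhere, both the degeneration of the spectral sequence and the fibrewise degree computation are insensitive to the characteristic, so the argument is valid over $\mathbb Z$.
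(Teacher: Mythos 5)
Your proof is correct and follows essentially the same route as the paper: the paper's one-line argument is precisely the Leray spectral sequence for $\pi$ combined with the vanishing of all push-forwards $R^q\pi_*(\omega_\pi(\Sigma)^\vee)$ for $q\neq 1$. You have simply supplied the justifications the paper leaves implicit, namely $R^q\pi_*=0$ for $q\ge 2$ by fibre dimension, and $\pi_*(\omega_\pi(\Sigma)^\vee)=0$ via the stability-driven negative-degree argument on every component together with Grauert's theorem.
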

\begin{proof} Since all other push-forwards of $\omega_{\pi}(\Sigma)^\vee$ vanish, it follows from Leray's spectral sequence.
\end{proof}

\begin{Lemma}\cite[Theorem 4]{Kn2}\label{psi} The pull-back under the natural isomorphism $\overline{\mathcal M}_{g,n+1}\to \mathcal U_{g,n}$ of the line bundle $\omega_\pi(\Sigma)$ is $\psi_{n+1}$.
\end{Lemma}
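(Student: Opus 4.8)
The plan is to prove that the two line bundles agree by isolating the locus where they can differ and then measuring the discrepancy in codimension one. First I would fix the geometric interpretation of both sides under the natural isomorphism $\overline{\mathcal M}_{g,n+1}\cong\mathcal U_{g,n}$: a point of $\overline{\mathcal M}_{g,n+1}$ is a stable curve $(C,x_1,\dots,x_{n+1})$, and its image records the $n$-pointed stabilization together with the position of $x_{n+1}$ on it. Under this identification the sections $\sigma_i$ whose union is $\Sigma$ become the boundary divisors where $x_i$ and $x_{n+1}$ lie on a common rational bubble, while both $\omega_\pi$ and $\psi_{n+1}$ restrict, at a point where $x_{n+1}$ is a smooth point of the curve distinct from $x_1,\dots,x_n$, to the cotangent line of the curve at $x_{n+1}$. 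On that open locus no stabilization occurs, so the natural map identifies $\psi_{n+1}$ with $\omega_\pi$ there; hence the line bundle $L:=\psi_{n+1}\otimes\omega_\pi(\Sigma)^{-1}$ is trivial away from $\Sigma$.

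Since $\overline{\mathcal M}_{g,n+1}$ is smooth, hence locally factorial, $L\cong\mathcal O(D)$ for a divisor $D$ supported on $\Sigma=\bigcup_i\sigma_i$, and it suffices to compute the multiplicity of $D$ along each irreducible component $\sigma_i$. I would read off this multiplicity by restricting to a general one-parameter family meeting $\sigma_i$ transversally: fix a general smooth $n$-pointed curve $(C,x_1,\dots,x_n)$, a local coordinate $w$ on $C$ with $w(x_i)=0$, and let the extra point move as $w=t$ for $t$ in a small disk $\Delta$. For $t\neq 0$ the $(n+1)$-pointed curve is $(C,x_1,\dots,x_n,\{w=t\})$, while at $t=0$ the points $x_i$ and $x_{n+1}$ collide and the stable model acquires a rational bubble; the corresponding universal family is the blow-up $\beta\colon\tilde S\to C\times\Delta$ at the point $(w,t)=(0,0)$, with exceptional curve $E$ the bubble.

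The decisive step is the blow-up computation. Here $\omega_{\tilde S/\Delta}=\beta^{*}\omega_{C\times\Delta/\Delta}\otimes\mathcal O(E)$, and $\psi_{n+1}$ restricted to $\Delta$ is the pull-back of $\omega_{\tilde S/\Delta}$ along the proper transform of the section $\{w=t\}$. Working in the two standard charts I would exhibit a local generator adapted to the bubble and compare it with the generator $dw$ coming from $\omega_\pi$; the comparison shows that $dw$, which trivialises $\omega_\pi$ along $\Delta$, acquires a simple zero at $t=0$ when regarded as a section of $\psi_{n+1}$. Thus $\psi_{n+1}$ exceeds $\omega_\pi$ by exactly one copy of $\sigma_i$ along each component, so $D=0$ and $L$ is trivial, which is the assertion $\psi_{n+1}\cong\omega_\pi(\Sigma)$.

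The hard part will be the bookkeeping of the relative dualizing sheaf through the blow-up and making sure no further discrepancy is hidden at the boundary, for instance when the moving point approaches a node of $C$. I expect to handle this by restricting all multiplicity computations to general points of the $\sigma_i$, where the curve is smooth and the remaining marked points are away, so that the local model above is exact; the remaining loci (nodal collisions, and mutual intersections of the $\sigma_i$) sit in codimension at least two and do not affect the divisor $D$. A subsidiary point to check is that $\omega_\pi$ and the relative dualizing sheaf of the stabilized family genuinely agree off $\Sigma$ and that the dualizing sheaf introduces no spurious twist at the nodes of the fibers, which follows from the compatibility of $\omega$ with the contraction of the bubble.
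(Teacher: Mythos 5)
Your proof is correct, but it necessarily differs from the paper's, because the paper does not prove this lemma at all: it is quoted as \cite[Theorem 4]{Kn2}. Knudsen's argument is of a different kind --- there the isomorphism $\overline{\mathcal M}_{g,n+1}\cong\mathcal U_{g,n}$ and the identification of the line bundle are established together, functorially, by constructing the stabilization and contraction morphisms for families of pointed curves and tracking the relative dualizing sheaf through the contraction of the destabilizing component; that proof is sheaf-theoretic, produces a canonical isomorphism, and is valid over $\Spec(\mathbb{Z})$, which matters for a paper whose whole point is arbitrary characteristic. Your argument is instead the standard divisor-class/test-curve proof: identify $\psi_{n+1}$ with $\omega_\pi$ canonically over the open locus where forgetting $x_{n+1}$ requires no stabilization, use smoothness (hence local factoriality) of $\overline{\mathcal M}_{g,n+1}$ to write $L=\psi_{n+1}\otimes\omega_\pi(\Sigma)^{-1}\cong\mathcal O(\sum_i a_i\sigma_i)$ --- the loci you must excise beyond $\Sigma$ (the moving point approaching a node, i.e.\ rational bridges, and deeper boundary strata) being of codimension two, as you correctly note --- and then kill each $a_i$ on a one-parameter family. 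Your local computation is the right one: in the blow-up $\tilde S\to C\times\Delta$ with node smoothing $t=wu$, one has $dw\equiv t\,dv$ relative to the base, so $dw$ viewed as a section of $\psi_{n+1}|_\Delta$ vanishes simply at $t=0$, and this zero is exactly cancelled by the twist by $\mathcal O(\sigma_i)$, giving $a_i=0$. Two small strengthenings: you do not actually need transversality of the disk to $\sigma_i$, since the canonical section of $L|_\Delta$ has vanishing order $1-1=0$ while the local equation of $\sigma_i$ pulls back with some order $m\ge 1$, so $a_im=0$ already forces $a_i=0$; and, as written, your proof gives an abstract isomorphism in $\Pic$ over an algebraically closed field --- sufficient for the vanishing arguments of Section \ref{g0} --- whereas Knudsen's statement is over $\mathbb{Z}$, so to recover that generality you should add a word of descent (a nonzero section of $L$ over $K$ whose base change trivializes $L_{\overline K}$ is itself nowhere vanishing).
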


Putting these results together, Hacking is able to reduce the rigidity of $\overline{\mathcal M}_{g,n}$ to vanishing of cohomologies of line bundles.

\begin{Proposition}\label{reducetolinebundles} Assume that $H^2(\overline{\mathcal M}_{g,n+1}, \psi_{n+1}^\vee)=0$, and that $H^1(\mathcal B^\nu,\mathcal N)=0$. Then $\overline{\mathcal M}_{g,n}$ is rigid. More generally, if $H^{i+1}(\overline{\mathcal M}_{g,n+1}, \psi_{n+1}^\vee)=H^i(\mathcal B^\nu,\mathcal N)=0$ then $H^i(\overline{\mathcal M}_{g,n}, T_{\overline{\mathcal M}_{g,n}})=0$.
\end{Proposition}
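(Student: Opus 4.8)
The plan is to feed the logarithmic tangent sequence into cohomology and identify both flanking terms via the lemmas already recorded, so that the stated vanishings pinch $H^i(\overline{\mathcal M}_{g,n},T_{\overline{\mathcal M}_{g,n}})$ to zero. Writing $X=\overline{\mathcal M}_{g,n}$ for brevity, the defining sequence of the logarithmic tangent bundle along the boundary $\mathcal B$,
$$0\to T_X(-\log\mathcal B)\to T_X\to \bar\nu_*\mathcal N\to 0,$$
yields the long exact cohomology sequence whose relevant segment is
$$H^i(X,T_X(-\log\mathcal B))\to H^i(X,T_X)\to H^i(X,\bar\nu_*\mathcal N).$$
It then suffices to show that the two outer groups vanish: exactness forces the middle one to be zero as well.

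First I would identify the right-hand term. Since $\bar\nu=i\circ\nu$ is a composition of a closed immersion with a normalization, it is finite, hence affine, so $R^j\bar\nu_*\mathcal N=0$ for $j>0$; the Leray spectral sequence collapses and gives $H^i(X,\bar\nu_*\mathcal N)\cong H^i(\mathcal B^\nu,\mathcal N)$, which is zero by the second hypothesis. Second, I would identify the left-hand term with a cohomology group of a line bundle on $\overline{\mathcal M}_{g,n+1}$. Combining the Corollary above, which reads $H^{i+1}(\mathcal U_{g,n},\omega_\pi(\Sigma)^\vee)\cong H^i(X,T_X(-\log\mathcal B))$, with the natural isomorphism $\overline{\mathcal M}_{g,n+1}\cong\mathcal U_{g,n}$ and Lemma \ref{psi}, under which $\omega_\pi(\Sigma)$ corresponds to $\psi_{n+1}$, produces
$$H^i(X,T_X(-\log\mathcal B))\cong H^{i+1}(\overline{\mathcal M}_{g,n+1},\psi_{n+1}^\vee),$$
which vanishes by the first hypothesis. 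Feeding both vanishings into the segment
$$0=H^i(X,T_X(-\log\mathcal B))\to H^i(X,T_X)\to H^i(\mathcal B^\nu,\mathcal N)=0$$
gives $H^i(X,T_X)=0$. The rigidity statement is exactly the case $i=1$: there $H^{i+1}=H^2$ and $H^i(\mathcal B^\nu,\mathcal N)=H^1(\mathcal B^\nu,\mathcal N)$ match the two displayed hypotheses, and $H^1(X,T_X)=0$ is the rigidity of the smooth stack $X$.

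Within this proposition there is essentially no obstacle: the argument is a formal assembly of the preceding lemmas via one long exact sequence, the only non-formal input being the collapse of the Leray spectral sequence for the finite morphism $\bar\nu$ and the careful bookkeeping of the cohomological shift in the Corollary. The genuine difficulty has been deliberately packaged into the hypotheses, and it is the first one that carries the weight: verifying $H^{i+1}(\overline{\mathcal M}_{g,n+1},\psi_{n+1}^\vee)=0$ is a Kodaira-type vanishing for the (semi)positive line bundle $\psi_{n+1}$, available over $\mathbb C$ but not in general in positive characteristic. This is precisely the point the paper must later confront by hand in genus zero, and it is where any attempt to upgrade the present formal reduction into an unconditional rigidity statement will run into trouble.
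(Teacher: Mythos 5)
Your proof is correct and is exactly the argument the paper intends (the paper states this proposition without proof, as a "putting these results together" summary of Hacking): take the long exact cohomology sequence of the defining sequence $0\to T_X(-\log\mathcal B)\to T_X\to\bar\nu_*\mathcal N\to 0$, identify the left term with $H^{i+1}(\overline{\mathcal M}_{g,n+1},\psi_{n+1}^\vee)$ via the Corollary and Lemma \ref{psi}, identify the right term with $H^i(\mathcal B^\nu,\mathcal N)$ by finiteness of $\bar\nu$, and conclude. Your bookkeeping of the cohomological shift and the case $i=1$ for rigidity matches the statement precisely.
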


\subsection{Characteristic zero part}

By \cite[Theorem 0.4]{Ke} the $\mathbb{Q}$-line bundle $p_{*}\omega_{\pi}(\Sigma)$ is nef and big. In \cite[Theorem 3.2]{Hac} Hacking proves that, over a field $K$ of characteristic zero
\begin{equation}\label{van}
H^{i+1}(\mathcal{U}_{g,n},\omega_{\pi}(\Sigma)^{\vee})\cong H^{i}(\overline{\mathcal{M}}_{g,n},T_{\overline{\mathcal{M}}_{g,n}}(-\logpol(\mathcal{B})) = 0
\end{equation}
for any $i < \dim(\overline{\mathcal{M}}_{g,n})$. Furthermore, by \cite[Corollary 4.4]{Hac} the $\mathbb{Q}$-line bundle on the coarse moduli space of $\mathcal{B}^{\nu}$ defined by $\mathcal{N}^{\vee}$ is nef and big on each component. Then
\begin{equation}\label{van2}
H^{i}(\overline{\mathcal{M}}_{g,n},\nu_{*}\mathcal{N}) = 0
\end{equation}
for $i < \dim(\mathcal{B})$. Finally, combining the vanishings (\ref{van}) and (\ref{van2}) Hacking proves that 
$$H^{i}(\overline{\mathcal{M}}_{g,n},T_{\overline{\mathcal{M}}_{g,n}}) = 0$$ 
for $i < \dim(\overline{\mathcal{M}}_{g,n})-1$. In particular $H^{1}(\overline{\mathcal{M}}_{g,n},T_{\overline{\mathcal{M}}_{g,n}}) = 0$, that is $\overline{\mathcal{M}}_{g,n}$ is rigid. Both the vanishings (\ref{van}) and (\ref{van2}) are consequences of \cite[Theorem A.1]{Hac} which is a version of Kodaira vanishing for proper and smooth Deligne-Mumford stacks. This theorem derives from the Kodaira vanishing theorem for proper normal varieties. Therefore it holds only in characteristic zero. At the best of our knowledge, the closest result to Kodaira vanishing in positive characteristic is the Deligne-Illusie vanishing \cite{DI}. However, such result works for ample line bundle while in positive characteristic the involved psi-classes are just semi-ample \cite{Ke}. 

\section{Rigidity of $\overline{M}_{0,n}$ in positive characteristic}\label{g0}

In this section we study the $g=0$ case; any $n$-pointed stable genus zero curve is automorphisms free, thus the coarse moduli space $\overline{M}_{0,n}$ is smooth and coincides with the stack. In \cite{Ka2} M. Kapranov constructed $\overline{M}_{0,n}$ via a sequence of blow-ups of $\mathbb{P}^{n-3}$ along linear spaces in order of increasing. The blow-up morphism $f:\overline{M}_{0,n}\rightarrow\mathbb{P}^{n-3}$ is induced by a psi-class $\psi_n$.\\ 
In \cite{Ka2} Kapranov works, for sake of simplicity, on an algebraically closed field of characteristic zero. On the other hand Kapranov's arguments are purely algebraic and his description works over $\Spec(\mathbb{Z})$.   

\begin{Theorem}\label{rig}
Let $K$ be a field, $n\ge 3$. Then, except for $i=0$ and $n=4$, we have
$$H^{i}(\overline{M}_{0,n},T_{\overline{M}_{0,n}}) = 0 \quad for \quad all \quad  \textit{i}$$
Hence, $\overline{M}_{0,n}$ is rigid for any $n\geq 3$.
\end{Theorem}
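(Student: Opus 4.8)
The plan is to deduce everything from Proposition~\ref{reducetolinebundles}, reducing the vanishing of $H^i(\overline M_{0,n},T_{\overline M_{0,n}})$ to the vanishing of $H^{i+1}(\overline M_{0,n+1},\psi_{n+1}^\vee)$ and of $H^i(\mathcal B^\nu,\mathcal N)$. Since genus-zero stable curves have no automorphisms, $\overline M_{0,n}=\overline{\mathcal M}_{0,n}$ is a smooth scheme and the proposition applies verbatim. The single new ingredient that replaces Hacking's Kodaira-type input --- and the only place where positive characteristic could cause trouble --- is the following acyclicity, which I would isolate as a lemma: for every $m\ge 4$,
\[
H^i(\overline M_{0,m},\psi_m^\vee)=0 \qquad \text{for all } i.
\]

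To prove the lemma I would use Kapranov's blow-down $f\colon\overline M_{0,m}\to\mathbb P^{m-3}$, under which $\psi_m=f^*\mathcal O(1)$. Since $f$ is a composition of blow-ups of smooth varieties along smooth centers, one has $Rf_*\mathcal O_{\overline M_{0,m}}=\mathcal O_{\mathbb P^{m-3}}$, a statement valid in every characteristic because it only uses the acyclicity of $\mathcal O$ on the projective-space fibers of each blow-up. The projection formula then gives $Rf_*(\psi_m^\vee)=\mathcal O_{\mathbb P^{m-3}}(-1)$, whence $H^i(\overline M_{0,m},\psi_m^\vee)=H^i(\mathbb P^{m-3},\mathcal O(-1))$, and the latter vanishes in all degrees as soon as $m-3\ge 1$. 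This is precisely the point where genus zero is special: $\psi_m$ is not merely semi-ample but the pullback of $\mathcal O(1)$ along an explicit birational morphism to projective space, so the computation is elementary and characteristic-free, sidestepping the failure of Kodaira vanishing discussed in Section~\ref{hacking}.

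With the lemma in hand the rest is bookkeeping. For $n\ge 5$ the first hypothesis of Proposition~\ref{reducetolinebundles} holds for every $i$, since $n+1\ge 4$. For the second, I would compute $H^i(\mathcal B^\nu,\mathcal N)$ componentwise: each component of $\mathcal B^\nu$ is a product $\overline M_{0,n_1+1}\times\overline M_{0,n_2+1}$ with $n_1+n_2=n$ and $n_1,n_2\ge 2$, on which $\mathcal N=p_1^*\psi_{n_1+1}^\vee\otimes p_2^*\psi_{n_2+1}^\vee$. The Künneth formula writes $H^i$ as a sum over $a+b=i$ of $H^a(\overline M_{0,n_1+1},\psi_{n_1+1}^\vee)\otimes H^b(\overline M_{0,n_2+1},\psi_{n_2+1}^\vee)$; since $n\ge 5$ at least one factor has $n_j+1\ge 4$, so by the lemma its whole tensor factor vanishes and every summand dies, giving $H^i(\mathcal B^\nu,\mathcal N)=0$ for all $i$. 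Proposition~\ref{reducetolinebundles} then yields $H^i(\overline M_{0,n},T_{\overline M_{0,n}})=0$ for all $i$ when $n\ge 5$. The base cases are direct: $\overline M_{0,3}$ is a point, while $\overline M_{0,4}\cong\mathbb P^1$ has $H^0(T)=H^0(\mathbb P^1,\mathcal O(2))\neq 0$ --- the genuine exception --- and $H^i(T)=0$ for $i\ge 1$. Collecting these gives the stated vanishing in all $i$ except $i=0$ when $n=4$, and in particular $H^1(\overline M_{0,n},T_{\overline M_{0,n}})=0$, i.e.\ rigidity, for every $n\ge 3$. The main obstacle is concentrated entirely in the lemma; once the acyclicity of $\psi_m^\vee$ is secured in arbitrary characteristic, the Künneth reduction and the base-case checks are routine.
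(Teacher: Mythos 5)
Your proposal is correct and follows essentially the same route as the paper: reduction via Proposition~\ref{reducetolinebundles}, then the vanishing $H^i(\overline M_{0,m},\psi_m^\vee)=0$ for all $i$ via Kapranov's blow-down $f\colon\overline M_{0,m}\to\mathbb P^{m-3}$, the triviality of $Rf_*\mathcal O$, the projection formula, and K\"unneth on the boundary components. If anything, your version is slightly tidier than the paper's, since you treat $n=4$ as an explicit base case (where $\overline M_{0,3}\times\overline M_{0,3}$ is a point and the K\"unneth vanishing genuinely fails in degree zero), a point the paper's blanket claim glosses over.
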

\begin{proof}
The case $n=3$ is obvious since the tangent bundle is zero, hence we may assume $n\ge 4$.
By Proposition \ref{reducetolinebundles} it is enough to prove that:
\begin{itemize}
\item[-] $H^i(\mathcal U_{0,n+1},\omega_\pi(\Sigma)^\vee)=0,$
\item[-] for all $n_1+n_2=n$ with $n_1, n_2\ge 2$, for all $i_1+i_2=i$ one has $$H^{i_1}(\overline M_{0,n_1+1},\psi_{n_1}^\vee)\otimes H^{i_2+1}(\overline M_{0,n_1+1},\psi_{n_2+1}^\vee)=0$$
\end{itemize}
Let us consider the line bundle $\omega_{\pi}(\Sigma)$ over $\mathcal{U}_{0,n}\cong\overline{M}_{0,n+1}$. By Lemma \ref{psi} $\omega_{\pi}(\Sigma)$ identifies with $\psi_{n+1}$. Recall that we have a birational morphism
$$f:\overline{M}_{0,n+1}\rightarrow\mathbb{P}^{n-2}$$
such that $f^{*}\mathcal{O}_{\mathbb{P}^{n-2}}(1)\cong \omega_{\pi}(\Sigma)\cong\psi_{n+1}$ given by the Kapranov's blow-up construction. Now, by \cite[Theorem 5.1]{Har} we have that $H^{i}(\mathbb{P}^{n-2},\mathcal{O}_{\mathbb{P}^{n-2}}(-1)) = 0$ for $\textit{i}< \textit{n}-2$. Furthermore we have $H^{0}(\mathbb{P}^{n-2},\mathcal{O}_{\mathbb{P}^{n-2}}(-1)) = 0$, and by Serre duality $H^{n-2}(\mathbb{P}^{n-2},\mathcal{O}_{\mathbb{P}^{n-2}}(-1)) \cong H^{0}(\mathbb{P}^{n-2},\mathcal{O}_{\mathbb{P}^{n-2}}(2-n)) = 0$ for $n\geq 3$. Then $H^{i}(\mathbb{P}^{n-2},\mathcal{O}_{\mathbb{P}^{n-2}}(-1)) = 0$ for any \textit{i}. By the projection formula we get
$$f_{*}(\psi_{n+1}^{\vee}) = f_{*}(\mathcal{O}_{\overline{M}_{0,n+1}}\otimes f^{*}\mathcal{O}_{\mathbb{P}^{n-2}}(-1))\cong f_{*}(\mathcal{O}_{\overline{M}_{0,n+1}})\otimes\mathcal{O}_{\mathbb{P}^{n-2}}(-1)$$
Since $f$ is a birational morphism between smooth varieties we have $f_{*}(\mathcal{O}_{\overline{M}_{0,n+1}})\cong\mathcal{O}_{\mathbb{P}^{n-2}}$, hence $f_{*}(\psi_{n+1}^{\vee})\cong\mathcal{O}_{\mathbb{P}^{n-2}}(-1)$.\\ 
Moreover since $f$ is a sequence of blow-ups of smooth varieties with smooth centers, we have $R^{i}f_{*}\psi_{n+1}^{\vee} = 0$ for $i > 0$, and by the Leray spectral sequence we get $H^{i}(\overline{M}_{0,n+1},\psi_{n+1}^{\vee})\cong H^{i}(\mathbb{P}^{n-2},f_{*}\psi_{n+1}^{\vee})$ for any $i\geq 0$. Therefore
$$H^{i}(\overline{M}_{0,n+1},\psi_{n+1}^{\vee})\cong H^{i}(\mathbb{P}^{n-2},\mathcal{O}_{\mathbb{P}^{n-2}}(-1)) = 0 \quad \rm for \: any \: \textit{i}$$
Summing up we proved the following vanishings:
\begin{equation}\label{van1}
H^{i+1}(\overline{M}_{0,n+1},\psi_{n+1}^{\vee}) \cong H^{i+1}(\overline{M}_{0,n+1},\omega_{\pi}(\Sigma)^{\vee})\cong H^{i}(\overline{M}_{0,n},T_{\overline{M}_{0,n}}(-\logpol(\mathcal{B})) = 0
\end{equation}
for any $i$. Now, let us consider the dual of the normal bundle $\mathcal{N}^{\vee}$. By \cite[Lemma 4.2]{Hac} the pull-back of $\mathcal{N}^{\vee}$ to $\overline{M}_{0,S_{1}\cup\{n_{1}+1\}}\times\overline{M}_{0,S_{2}\cup\{n_{2}+1\}}$ is identified with $p_1^{*}\psi_{n_{1}+1}\otimes p_2^{*}\psi_{n_{2}+1}$. Note that since $g = 0$ we do not have the boundary divisor parametrizing irreducible nodal curves.\\
Recalling again the Kapranov's construction we have that the pull-back of $\mathcal{N}^{\vee}$ to $\overline{M}_{0,S_{1}\cup\{n_{1}+1\}}\times\overline{M}_{0,S_{2}\cup\{n_{2}+1\}}$ defines a birational morphism
$$g:=f_{n_{1}+1}\times f_{n_{2}+1}:\overline{M}_{0,S_{1}\cup\{n_{1}+1\}}\times\overline{M}_{0,S_{2}\cup\{n_{2}+1\}}\rightarrow\mathbb{P}^{s_1-2}\times\mathbb{P}^{s_2-2}$$
where $s_i = |S_{i}|$ for $i = 1,2$. Then $\mathcal{N} = g^{*}(\mathcal{O}(-1,-1))$, where 
$$\mathcal{O}(-1,-1):=\mathcal{O}_{\mathbb{P}^{s_1-2}}(-1)\boxtimes\mathcal{O}_{\mathbb{P}^{s_2-2}}(-1).$$
By K\"unneth formula we have 
$$H^{i}(\mathbb{P}^{s_1-2}\times\mathbb{P}^{s_2-2},\mathcal{O}(-1,-1)) = \bigoplus_{j+k=i} H^{j}(\mathbb{P}^{s_1-2},\mathcal{O}_{\mathbb{P}^{s_1-2}}(-1))\otimes H^{k}(\mathbb{P}^{s_2-2},\mathcal{O}_{\mathbb{P}^{s_2-2}}(-1))$$
Therefore we get $H^{i}(\mathbb{P}^{s_1-2}\times\mathbb{P}^{s_2-2},\mathcal{O}(-1,-1)) = 0$ for any $i$. Finally, applying to $\mathcal{N}^{\vee}$ the argument we used in the first part of the proof for $\omega_{\pi}(\Sigma)^{\vee}$ we get $H^i(\mathcal B^\nu,\mathcal N)=0$ for any $i$. Now, to conclude it is enough to apply Proposition \ref{reducetolinebundles}.
\end{proof}

As an immediate consequence of the proof of Theorem \ref{rig} we get the following result on the deformations of the pair $(\overline{M}_{0,n},\partial\overline{M}_{0,n})$, see \cite[Section 3.3.3]{Se} for the general theory of deformations of the pair.

\begin{Corollary}\label{loctri}
Let $K$ be any field, $i\ge 0$, $n\ge 3$. Then 
$$H^{i}(\overline{M}_{0,n},T_{\overline{M}_{0,n}}(-\logpol(\mathcal{B}))) = 0$$
In particular, the pair $(\overline{M}_{0,n},\partial\overline{M}_{0,n})$ is rigid for any $n\geq 3$.
\end{Corollary}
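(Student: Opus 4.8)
The plan is to observe that the required vanishing has in fact already been established inside the proof of Theorem \ref{rig}. There the chain of isomorphisms \eqref{van1} was derived, namely
$$H^{i}(\overline{M}_{0,n},T_{\overline{M}_{0,n}}(-\logpol(\mathcal{B}))) \cong H^{i+1}(\overline{M}_{0,n+1},\psi_{n+1}^{\vee}),$$
where the first map comes from Hacking's Lemma \ref{psi} and the Corollary following Lemma 3.1, identifying $\omega_\pi(\Sigma)$ with $\psi_{n+1}$ on $\mathcal U_{0,n}\cong\overline{M}_{0,n+1}$ and shifting cohomological degree by one. The right-hand side was shown to vanish for \emph{every} $i$ by pushing $\psi_{n+1}^{\vee}$ forward along the Kapranov morphism $f:\overline{M}_{0,n+1}\to\mathbb{P}^{n-2}$, using $f_*\psi_{n+1}^\vee\cong\mathcal{O}_{\mathbb{P}^{n-2}}(-1)$, $R^if_*\psi_{n+1}^\vee=0$ for $i>0$, and the classical fact that $H^{j}(\mathbb{P}^{n-2},\mathcal{O}_{\mathbb{P}^{n-2}}(-1))=0$ for all $j$. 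Thus no new computation is needed: the statement for the log tangent sheaf is just \eqref{van1} recorded for all $i\ge 0$.

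For the second assertion I would invoke the deformation theory of the pair $(X,D)$ with $D$ a normal crossing divisor, following \cite[Section 3.3.3]{Se}: first-order deformations of the pair are classified by $H^{1}(X,T_X(-\logpol D))$, with obstructions living in $H^{2}$. Specializing to $X=\overline{M}_{0,n}$ and $D=\partial\overline{M}_{0,n}=\mathcal{B}$, the vanishing $H^{1}(\overline{M}_{0,n},T_{\overline{M}_{0,n}}(-\logpol(\mathcal{B})))=0$ proved above shows that the pair admits no nontrivial first-order deformations, and is therefore rigid. (In fact the vanishing in all degrees also kills the obstruction space, which is consistent with the stronger statement.)

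I do not expect a genuine obstacle here, since the substance already lies in the cohomological vanishing carried out for Theorem \ref{rig}. The only point requiring a little care is the index bookkeeping: the isomorphism \eqref{van1} shifts degree, sending $H^{i+1}$ of the line bundle to $H^{i}$ of the log tangent sheaf, so one must confirm that the line-bundle vanishing covers \emph{all} degrees on the source, including the top degree $H^{n-2}(\mathbb{P}^{n-2},\mathcal{O}(-1))$ treated by Serre duality in the proof of Theorem \ref{rig}. Once this is verified, every $i\ge 0$ on the log-tangent side is accounted for and the corollary follows at once, uniformly over an arbitrary field $K$.
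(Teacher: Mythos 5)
Your proposal is correct and matches the paper's own reasoning: the paper states this corollary as an immediate consequence of the proof of Theorem \ref{rig}, where the vanishing $H^{i}(\overline{M}_{0,n},T_{\overline{M}_{0,n}}(-\logpol(\mathcal{B})))\cong H^{i+1}(\overline{M}_{0,n+1},\psi_{n+1}^{\vee})=0$ for all $i$ was established via the Kapranov morphism, exactly as you recall it, and the rigidity of the pair then follows from the deformation theory of pairs in \cite[Section 3.3.3]{Se}, which is also the reference the paper invokes. Your remark on the degree shift and the top-degree Serre duality check is precisely the bookkeeping the paper's equation (\ref{van1}) encodes, so nothing further is needed.
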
 

\section{Deformations of $\overline{M}_{1,2}$}\label{elliptic}

In this section we prove that the moduli space $\overline{M}_{1,2}$ is not rigid. We work throughout over an algebraically closed field $K$ with $\ch(K)\neq 2,3$.

\subsection{Preliminaries}

Choose $n$ and $a_0,\ldots,a_n$ positive integers. We will denote by $\mathbb P(a_0,\ldots, a_n)$ the associated weighted projective space, defined as the quotient scheme of the action of $K^*$ on $\mathbb A^{n+1}_0:=\mathbb A^n\setminus\{0\}$ defined by 
$$\lambda\cdot(x_0,\ldots,x_n)=(\lambda^{a_0}x_0,\ldots,\lambda^{a_n}x_n)$$
The quotient ${\mathbb P}(a_0,\ldots,a_n)$ is a projective scheme with quotient singularities of type $\frac{1}{a_i}(a_0,...,\hat{a}_i,...,a_n)$ of the $i$-th fundamental point. 

Since we are in characteristic different from $2$ and $3$, $\overline{\mathcal M}_{1,1}$ can be identified, using the Weierstrass form of the elliptic curve, with the weighted projective stack $\mathbb{ P}(4,6)$ and $\overline{\mathcal M}_{1,2}$ is the stack quotient $[X/(K^*\times K^*)]$ where $X\subset \mathbb A^3_0\times \mathbb A^2_0$ is the closed subset defined by the equation 
$$X := Z(zy^{2}-x^{3}-axz^{2}-bz^{3})\subset\mathbb{A}^{3}_{0}\times \mathbb{A}^{2}_{0}$$
and the action is defined by
\begin{equation}\label{act12}
\begin{array}{ccc}
(K^{*}\times K^{*})\times X &  \longrightarrow & X\\
((\lambda,\xi),(x,y,z,a,b)) & \longmapsto & (\xi\lambda^{2}x,\xi\lambda^{3}y,\xi z,\lambda^{4}a,\lambda^{6}b)
\end{array}
\end{equation}
We denote by $E_4$ and $E_6$ the elliptic curves corresponding to the points $(1,0)$ and $(0,1)$ of $\overline{\mathcal M}_{1,1}$.

\begin{Lemma}\label{facile} 
Let $\mu_4$ act on $E_4$ via $\lambda\cdot(x,y,z)=(\lambda^2x, \lambda^3y, z)$; let $p_1=(0,1,0)$ and $p_2=(0,0,1)$ be the fixed points. Then $\lambda\in \mu_4$ acts on $H^1(E_4,T_{E_4}(-p_1))$ by multiplication by $\lambda^2$ and on $T_{p_1}E_4$ and $T_{p_2}E_4$ by multiplication by $\lambda$.\\
Let $\mu_6$ act on $E_6$ via $\lambda\cdot(x,y,z)=(\lambda^2x, \lambda^3y, z)$; let $p_1=(0,1,0)$ be the fixed point, and $p_2=(0,0,1)$ the fixed point of the induced action of $\mu_3$. Then $\mu_6$ acts on $H^1(E_6,T_{E_6}(-p_1))$ by multiplication by $\lambda^2$ and on $T_{p_1}E_6$ by multiplication by $\lambda$. Finally, $\varepsilon\in\mu_3$ acts on $T_{p_2}E_6$ by multiplication by $\varepsilon$.
\end{Lemma}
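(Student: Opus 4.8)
The plan is to observe that each of the three spaces in the statement is one-dimensional, to exhibit an explicit generator, and then to read off the character by which the cyclic group acts on that generator. Throughout I would work with the explicit Weierstrass cubics $E_4=\{zy^2=x^3+xz^2\}$ and $E_6=\{zy^2=x^3+z^3\}$ in $\mathbb P^2$ (the fibres over $(1,0)$ and $(0,1)$), on which $\lambda$ acts as the restriction of the map $[x:y:z]\mapsto[\lambda^2x:\lambda^3y:z]$ coming from $(\ref{act12})$; write $\phi_\lambda$ for this automorphism. The single computation underlying everything is the transformation of the invariant differential $\omega=dx/y$: since $x\mapsto\lambda^2x$ and $y\mapsto\lambda^3y$, one finds $\phi_\lambda^{*}\omega=\lambda^{-1}\omega$. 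Because $\Omega^1_E$ is trivialized by $\omega$ and $T_E$ by the dual invariant vector field, this relation fixes the normalization for all of the characters below.

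For the tangent spaces I would choose a local uniformizer at each fixed point and compute the multiplier of $\lambda$ on it. At $p_1=(0,1,0)$ one must first pass to the chart $y=1$ and rescale the projective representative, which turns the action into $(u,w)\mapsto(\lambda^{-1}u,\lambda^{-3}w)$ with $u=x/y$ and $w=z/y$; since the curve reads $w=u^3+\dots$ near $p_1$, the uniformizer $u$ has multiplier $\lambda^{-1}$. In the chart $z=1$ one treats the remaining points: for $E_4$ the origin $p_2=(0,0)$ has $y$ as a uniformizer (there $x\sim y^2$), giving multiplier $\lambda^3$; for $E_6$ the $\mu_3$-fixed point (where $x=0$) carries $x$ as a uniformizer, giving multiplier $\varepsilon^2$ for $\varepsilon\in\mu_3$. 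Since the stabilizer acts on $T_pE$ through $\phi_{\lambda^{-1}}$, i.e. by the inverse of these multipliers, and $\lambda^{-1}=\lambda^3$ in $\mu_4$ while $\varepsilon^{-2}=\varepsilon$ in $\mu_3$, these become $\lambda$, $\lambda$ and $\varepsilon$ respectively, as in the statement.

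For the cohomology group I would use that the invariant vector field gives an equivariant isomorphism $T_E\cong\mathcal O_E$, hence $T_E(-p_1)\cong\mathcal O_E(-p_1)$ up to the character carried by $\omega$, and then apply Serre duality to get $H^1(E,T_E(-p_1))\cong H^0(E,(\Omega^1_E)^{\otimes 2}(p_1))^\vee$. Because $\deg p_1=1$ on an elliptic curve, $H^0(E,\mathcal O_E(p_1))$ consists only of the constants, so $H^0(E,(\Omega^1_E)^{\otimes 2}(p_1))=K\cdot\omega^{\otimes2}$, on which $\phi_\lambda^{*}$ acts by $\lambda^{-2}$; dualizing yields the character $\lambda^2$ on $H^1$, as claimed. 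Note that for $\mu_4$ the ambiguity between $\lambda^2$ and $\lambda^{-2}$ disappears, while for $\mu_6$ it is resolved by this duality computation.

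The main obstacle is purely bookkeeping: keeping the three characters consistent. Concretely, (i) at $p_1$ one must normalize the projective representative before differentiating, or the multiplier at infinity comes out incorrectly; and (ii) in the $H^1$ computation one must combine the twist coming from $T_E\cong\mathcal O_E$ with the inversion built into Serre duality, and one must fix once and for all the covariant/contravariant convention — that the stabilizer acts on the covariant $T_pE$ via $\phi_{\lambda^{-1}}$ but on the contravariant $H^1$ via $\phi_\lambda^{*}$ — so that all the displayed powers match the normalization in the statement.
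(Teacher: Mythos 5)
Your proposal is correct and takes essentially the same route as the paper's proof: the paper likewise reads off the tangent characters from the local coordinate $x/y$ at $p_1$ (and its analogues at the other fixed points, treating only $E_6$ explicitly and declaring the rest similar), and gets the $H^1$ character by equivariant Serre duality, dualizing the action $\lambda^{-2}$ on quadratic differentials, with the same covariant/contravariant normalization you spell out. The only cosmetic difference is that the paper first invokes the canonical isomorphism $H^1(E,T_E(-p_1))\cong H^1(E,T_E)$ and then dualizes $H^0(E,\Omega_E^{\otimes 2})$, whereas you apply Serre duality directly to $T_E(-p_1)$ and check $H^0(E,\Omega_E^{\otimes 2}(p_1))=K\cdot\omega^{\otimes 2}$ (your computation also quietly works at the correct $\mu_3$-fixed point $[0:1:1]$ on $E_6$, rather than the misprinted $(0,0,1)$ of the statement, which does not lie on $E_6$).
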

\begin{proof} We consider the $E_6$ case. The function $\frac{x}{y}$ is a local coordinate at the point $p_1$, hence the action of $\lambda$ on $\Omega_{p_1}E_6$ is given by multiplication by $\lambda^{-1}$, and therefore the action on the tangent space is given by $\lambda$.  There is a canonical isomorphism $H^1(E_6,T_{E^6}(-p_1))\to H^1(E_6,T_{E^6})$; the latter is $\mu_6$-equivariantly dual to $H^0(E_6,\Omega_{E_6}^{\otimes 2})$ and by what we have just proven the action on one forms is given by $\lambda^{-2}$. The other cases are similar.
\end{proof}

Let $(C,p)$ be a smooth or irreducible nodal elliptic curve, that is a point in $\overline{\mathcal M}_{1,1}$. Since the characteristic is different from $2$ and $3$ we can put $C$ in Weierstrass from, that is there exists $(a,b)\in \mathbb{A}^{2}_0$ such that $(C,p)$ is isomorphic to $(C_{a,b},[0:1:0])$, where
\begin{equation}\label{weier}
C_{a,b} = Z(zy^{2}-x^{3}-axz^{2}-bz^{3})\subset\mathbb{P}^{2}.
\end{equation}
This representation is called \textit{Weierstrass representation} of the elliptic curve. Moreover the set of isomorphisms between $(C_{a,b},[0:1:0])$ and $(C_{\bar a,\bar b},[0:1:0])$ can be identified with $\{\lambda\in K\,|\, (\bar a,\bar b)=(\lambda^4 a,\lambda^6 b)\}$ by associating to each such $\lambda$ the map $\phi_\lambda(x,y,z)=(\lambda^2 x, \lambda^3 y, z)$.
Using the representation (\ref{weier}) we have:
$$E_{4} := \{y^{2}z = x^{3}+xz^{2}\}\subset\mathbb{P}^{2},\quad E_{6} := \{y^{2}z = x^{3}+z^{3}\}\subset\mathbb{P}^{2}.$$
The rational Picard group of $\overline{M}_{1,2}$ is freely generated by the two boundary divisors \cite{Mor}: the divisors $\Delta_{irr}$ parametrizing irreducible nodal curves, and $\Delta_{1}$ parametrizing elliptic tails. These divisors are both smooth, rational curves. The boundary divisor $\Delta_{1}$ has negative self-intersection. In the following we give an explicit description of the contraction of $\Delta_{1}$.

\subsection{$\overline M_{1,2}$ as toric variety}
Assume $n$ is an integer prime to the characteristic of $K$. We denote by $\frac{1}{n}(a_1,a_2)$ any surface singularity which is \'etale locally isomorphic to $\Spec(K[t_1,t_2]^{\mu_n})$ where $\mu_n$ acts by $\lambda\cdot(x_1,x_2)=(\lambda^{a_1}x_1,\lambda^{a_2}x_2)$.

\begin{Proposition}\label{M12}
The moduli space $\overline{M}_{1,2}$ is a surface with four singular points. Two singular points lie in $M_{1,2}$, and are:
\begin{itemize}
\item[-] a singularity of type $\frac{1}{2}(1,1)$ at the image of $(0,0,1,1,0)\in X$,
representing an elliptic curve of Weierstrass representation $E_{4}$ with marked points $[0:1:0]$ and $[0:0:1]$;
\item[-] a singularity of type $\frac{1}{3}(1,2)$ at the image of $(0,1,1,0,1)\in X$,
 representing an elliptic curve of Weierstrass representation $E_{6}$ with marked points $[0:1:0]$ and $[0:1:1]$.
\end{itemize}
The remaining two singular points lie on the boundary divisor $\Delta_{1}$, and are:
\begin{itemize}
\item[-] a singularity of type $\frac{1}{3}(1,1)$ at the image of $(0,1,0,1,0)\in X$,
 representing a reducible curve whose irreducible components are an elliptic curve of type $E_{6}$ and a smooth rational curve connected by a node;
\item[-] a singularity of type $\frac{1}{2}(1,1)$ at the image of $(0,1,0,0,1)\in X$,
 representing a reducible curve whose irreducible components are an elliptic curve of type $E_{4}$ and a smooth rational curve connected by a node.
\end{itemize}
\end{Proposition}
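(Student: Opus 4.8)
The plan is to work with the presentation $\overline{\mathcal M}_{1,2}=[X/(K^*\times K^*)]$ recalled above and to analyse the coarse space $\overline M_{1,2}$ through the induced action of the torus $T:=K^*\times K^*$. First I would verify that $X$ is smooth: the simultaneous vanishing of the partials of $F=zy^2-x^3-axz^2-bz^3$ forces $z=0$ and then $x=y=0$, which is excluded from $\mathbb A^3_0\times\mathbb A^2_0$. Since $\dim X=4$, $\dim T=2$ and the generic stabilizer is trivial, all orbits are two-dimensional with finite stabilizer; hence $[X/T]$ is a smooth Deligne--Mumford stack and $\overline M_{1,2}$ is a surface. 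By the local structure theorem for coarse spaces of smooth stacks (the slice theorem), near an orbit $T\cdot p$ with stabilizer $H$ the space $\overline M_{1,2}$ is \'etale-locally the quotient $N/H$, where $N:=T_pX/T_p(T\cdot p)$ is the slice and $H$ acts linearly on it via the differential. The singular points are therefore the images of the orbits on which $H$ acts with a singular quotient.

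Next I would classify stabilizers by solving $\xi\lambda^2x=x$, $\xi\lambda^3y=y$, $\xi z=z$, $\lambda^4a=a$, $\lambda^6b=b$. In the interior ($z\neq0$, forcing $\xi=1$) a nontrivial $\lambda$ requires the second marked point to be a Weierstrass point ($y=0$); this gives a one-dimensional locus with generic stabilizer $\mu_2=\{\pm1\}$, jumping to $\mu_4$ over $E_4$ with second point $[0:0:1]$ and to $\mu_3$ over $E_6$ with second point $[0:1:1]$. On the boundary ($z=0$, which forces $x=0$), a copy of $\overline M_{1,1}$, the generic stabilizer is again $\mu_2$ and jumps to $\mu_4$ over $E_4$ and $\mu_6$ over $E_6$. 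Here $\ch(K)\neq 2,3$ guarantees that these groups are reduced of the stated orders. The involution $-1$ is the elliptic involution; it fixes a hyperplane of $N$ and so acts as a pseudoreflection, whence the two $\mu_2$-loci give smooth coarse points. Thus only the four special points can be singular.

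At each of the four points I would identify $N$ with a deformation space and read the $H$-weights from Lemma \ref{facile}. For an interior point $(E,p_1,p_2)$, the sequence $0\to T_E(-p_1-p_2)\to T_E(-p_1)\to T_{p_2}E\to0$ together with $H^0(E,T_E(-p_1))=0$ gives $N\cong H^1(E,T_E(-p_1))\oplus T_{p_2}E$; for a boundary point, where $E$ is glued at its origin to a rational tail $R$ carrying the two marks, the rigid $3$-pointed $R$ contributes nothing and the node a smoothing parameter, so $N\cong H^1(E,T_E(-p_1))\oplus(T_{p_1}E\otimes T_qR)$. By Lemma \ref{facile} the stabilizer acts with weight $\lambda^2$ on $H^1(E,T_E(-p_1))$ and with weight $\lambda$ on $T_{p_2}E$, respectively on $T_{p_1}E$; hence in every case $\mu_n$ acts on $N$ with weights $(1,2)$. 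It remains to identify $\frac{1}{n}(1,2)$: for $n=3$ this is $\frac{1}{3}(1,2)$; for $n=4$ and $n=6$ the order-two subgroup acts as a pseudoreflection, and removing it (Chevalley--Shephard--Todd) gives $\frac{1}{4}(1,2)\cong\frac{1}{2}(1,1)$ and $\frac{1}{6}(1,2)\cong\frac{1}{3}(1,1)$. Matching each point to the curve it represents, via its coordinates $(a,b)$ and $[x:y:z]$, then produces the stated list.

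The hard part is this last reduction. The weights read off directly, namely $(1,2)$ modulo $4$ or $6$, are not in normal form and secretly contain a pseudoreflection, so the true singularity is milder than the apparent $\frac{1}{4}$ or $\frac{1}{6}$; obtaining $\frac{1}{2}(1,1)$ and $\frac{1}{3}(1,1)$ requires the invariant-theoretic computation, equivalently normalizing the cyclic-quotient data. Building $N$ correctly on the boundary --- remembering the node-smoothing summand and checking the rigidity of the $3$-pointed rational tail --- and confirming that the two generic $\mu_2$-loci contribute no singularities are the remaining delicate points.
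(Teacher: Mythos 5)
Your proposal is correct, but it takes a genuinely different route from the paper. The paper's proof is a bare-hands coordinate computation on the atlas $X$: on the chart $\{z\neq 0\}$ it sets $\xi=1$, eliminates $b$, and classifies stabilizers and their linearized actions in the coordinates $(x,y,a)$; on the chart $\{y\neq 0\}$ it uses $\xi=\lambda^{-3}$ and coordinates $(x,a,b)$; the types $\frac{1}{2}(1,1)$, $\frac{1}{3}(1,2)$, $\frac{1}{3}(1,1)$ are then read off from the explicit weights (e.g.\ $\frac{1}{4}(2,3)\cong\frac{1}{2}(1,1)$, the pseudoreflection normalization being left implicit). You instead run the slice theorem on the smooth Deligne--Mumford stack $[X/(K^*\times K^*)]$, identify the slice at each special orbit with the deformation space of the corresponding pointed curve ($H^1(E,T_E(-p_1))\oplus T_{p_2}E$ in the interior, $H^1(E,T_E(-p_1))\oplus(T_{p_1}E\otimes T_qR)$ on the boundary), read the weights from Lemma \ref{facile}, and normalize $\frac{1}{4}(1,2)$ and $\frac{1}{6}(1,2)$ by Chevalley--Shephard--Todd; this is more machinery, but it is intrinsic, makes the weights conceptually transparent, and is essentially the method the paper itself deploys later for Proposition \ref{sing}, where no explicit atlas is available, whereas the paper's computation is elementary and hands you the explicit points of $X$ for free. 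Three small repairs to your write-up. First, your claim that ``a nontrivial $\lambda$ requires the second marked point to be a Weierstrass point ($y=0$)'' is contradicted by your own next clause: the point $(E_6,[0:1:1])$ has $y\neq 0$ and stabilizer $\mu_3$ (it is a $3$-torsion point, not $2$-torsion); the interior fixed locus is the curve $\{y=0\}$ (generic stabilizer $\mu_2$, jumping to $\mu_4$ at $(E_4,[0:0:1])$) together with that one isolated $\mu_3$-point. Second, Lemma \ref{facile} covers only $E_4$ and $E_6$, so along the generic points of the two $\mu_2$-loci you need the analogous, identically proved, fact that the elliptic involution acts by $+1$ on $H^1(E,T_E(-p_1))$ and by $-1$ on $T_{p_2}E$, respectively on the node-smoothing line. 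Third, your final matching step, done carefully, actually shows that the coordinates of the two boundary points in the statement are interchanged: $(0,1,0,1,0)$ has $(a,b)=(1,0)$, hence an $E_4$-tail and type $\frac{1}{2}(1,1)$, while $(0,1,0,0,1)$ has $(a,b)=(0,1)$, hence an $E_6$-tail and type $\frac{1}{3}(1,1)$; this agrees with the paper's own proof, which attaches $\frac{1}{3}(1,1)$ to $a=0$, $b\neq 0$ and $\frac{1}{2}(1,1)$ to $a\neq 0$, $b=0$.
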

\begin{proof}
Note that on $X$, $z=0 \Rightarrow x=0 \Rightarrow y\neq 0$. So $X$ is covered by the charts $\{z\neq 0\}$ and $\{y\neq 0\}$. Consider first the chart $\{z\neq 0\}$. Then we may take $\xi = 1$. On this chart $X$ is given by $\{y^{2} = x^{3}+ax+b\}$ so $b = y^{2}-x^{3}-ax$. We can take $(x,y,a)$ as coordinates, and the action of $K^{*}\times K^{*}$ is given by $(\lambda,x,y,a)\mapsto (\lambda^{2}x,\lambda^{3}y,\lambda^{4}a)$. The point $(0,0,0)$ is stabilized by $K^{*}\times K^{*}$, so does not produce any singularity. Since $(2,3) = (3,4) = 1$ the points $(x,y,a)$ such that $xy\neq 0$ or $ya\neq 0$ have trivial stabilizer.\\
If $y = 0$ the action is given by $(\lambda,x,a)\mapsto (\lambda^{2}x,\lambda^{4}a)$. If $x = 0$ then $a\neq 0$, the stabilizer is $\mu_{4}$. So on the chart $a\neq 0$ we have a singularity of type $\frac{1}{2}(1,1)$. Note that $x=y=0$ implies $b = 0$. The singular point corresponds to a smooth elliptic curve of Weierstrass form $E_{4}$ and whose second marked point is $[0:0:1]$. If $x\neq 0$ then the stabilizer is $\mu_{2}$ and on this chart we find points of type $\frac{1}{2}(1,0)$ and these are smooth points. If $y\neq 0$ then $\xi\lambda^{3} = \lambda^3 = 1$ and we get a singularity of type $\frac{1}{3}(1,2)$ at the point $a=x=0$.\\
Consider now the locus $\{z=0\}$. We can take $y = 1$. Then $\xi\lambda^3 = 1$ and $X$ is given by $\{z=x^{3}+axz^{2}+bz^{3}\}$. We are interested in a neighborhood of $x = z = 0$. Let $f(x,z,a,b) = z-x^{3}-axz^{2}-bz^{3}$ be the polynomial defining $X$. Since $\frac{\partial f}{\partial z}_{|z=0} \neq 0$ we can choose $(x,a,b)$ as local coordinates. The action is given by $(\lambda,\xi,x,a,b)\mapsto (\xi\lambda^{2}x,\lambda^{4}a,\lambda^{6}b)$. If $x\neq 0$ the stabilizer is trivial. If $x = 0$ and $ab\neq 0$ the stabilizer is $\mu_{2}$ and does not produce any singularity. If $a = 0, b\neq 0$ then $\lambda^6 = 1$. This yields a singular point of type $\frac{1}{3}(1,1)$. If $a\neq 0, b = 0$ then $\lambda^4 = 1$ and we get a singular point of type $\frac{1}{2}(1,1)$ .
\end{proof}

\subsubsection{Weighted blow-ups}\label{genwbu}
The weighted blow-up of $\mathbb{A}^2$ with weights $(a_1,a_2)$ where $\gcd(a_1,a_2) = 1$ is a projective birational morphism $f:X\rightarrow\mathbb{A}^2$ such that $X$ is covered by two affine charts $U_1 \cong \mathbb{A}^2/\mu_{a_1}$ where the action is given by $\lambda\cdot(y_1,y_2)=(\lambda y_1,\lambda^{-a_2}y_2)$, and $U_2 \cong\mathbb{A}^2/\mu_{a_2}$ where $\lambda\cdot(y_1,y_2)=(\lambda^{-a_1}y_1,\lambda y_2)$. Note that the exceptional divisor $E$ is given in $U_i$ by  $\{y_i=0\}$. Therefore, $X$ has two finite quotient singularities of type 
\begin{equation}\label{singwbu}
\frac{1}{a_1}(1,-a_2) \quad \rm{and} \quad \frac{1}{\textit{a}_2}(-\textit{a}_1,1)
\end{equation}
on the exceptional divisor $E = \mathbb{P}(a_1,a_2)\cong \mathbb{P}^1$ corresponding to the points $[1:0]$, $[0:1]$ of $E$ respectively. By \cite[Section 3]{Hay} $X$ can be constructed as a follows. Let $C = \mathbb{R}_{+}e_1+\mathbb{R}_{+}e_2$ be the cone in $\mathbb{R}^2$ whose rays are $e_1 = (1,0)$, $e_2 = (0,1)$. We divide the cone $C$, in two cones $C_1$ and $C_2$, by adding the $1$-dimensional cone $\mathbb{R}_{+}e_3$, where $e_3 = (a_1,a_2)$. Let $\Sigma_{a_1,a_2}$ be the fan consisting of all the faces of $C_1, C_2$. Then $X$ is the toric variety determined by the lattice $N = \mathbb{Z}e_1+\mathbb{Z}e_2+\mathbb{Z}e_3$ and the fan $\Sigma_{a_1,a_2}$.

\begin{Theorem}\label{wbu}
The moduli space $\overline{M}_{1,2}$ is isomorphic to the weighted blow-up with weights $(2,3)$ of the weighted projective plane $\mathbb{P}(1,2,3)$ in its smooth point $[1:0:0]$. That is $\overline{M}_{1,2}$ is the toric variety associated to toric fan $\Sigma$ in the following picture: 
$$
\includegraphics[scale=0.45]{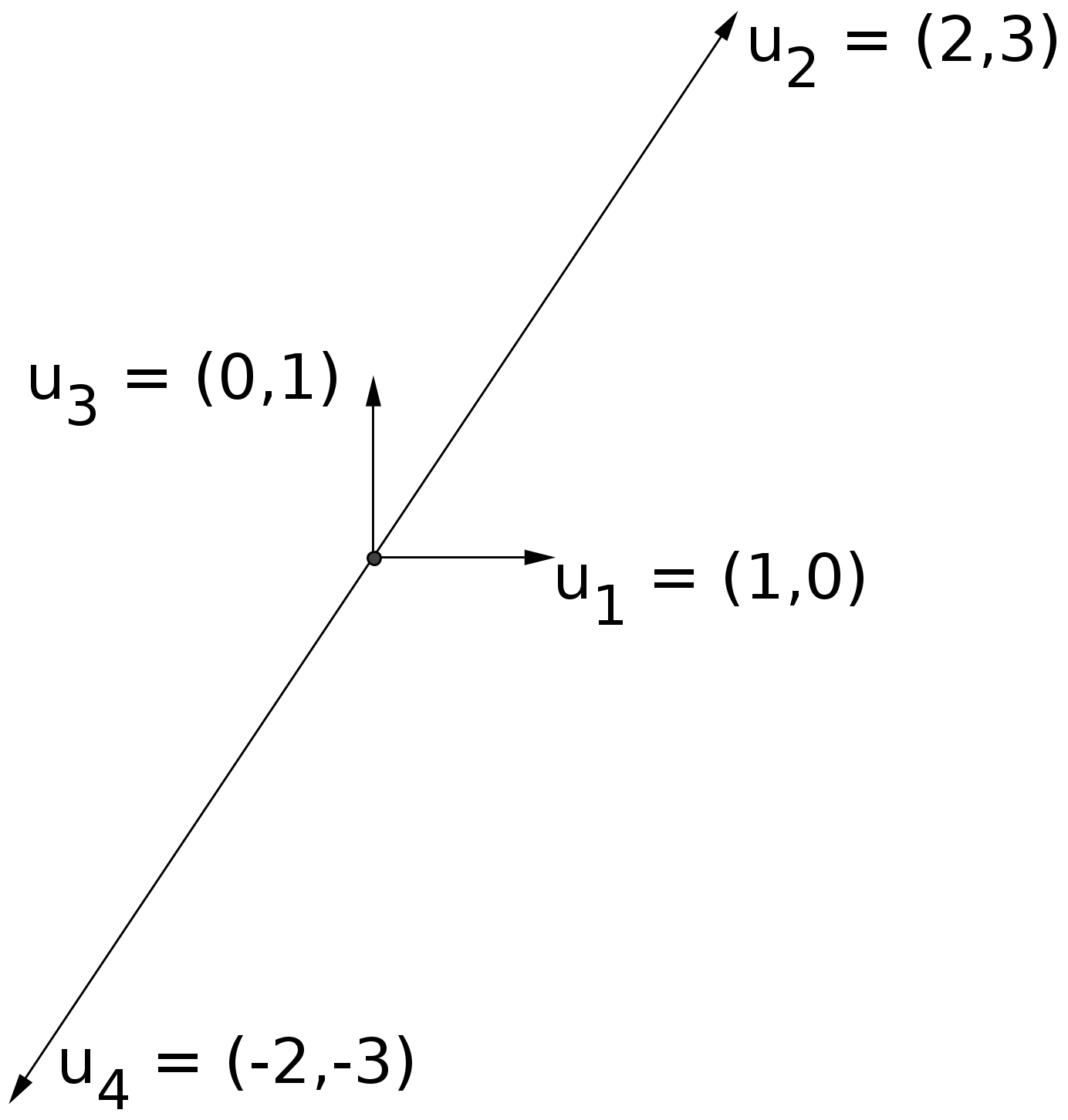} 
$$
\end{Theorem}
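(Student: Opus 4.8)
The plan is to identify $\overline{M}_{1,2}$ with an explicit toric surface and then recognize its fan as a star subdivision of the fan of $\mathbb{P}(1,2,3)$. Concretely, I would first prove that the quotient $\overline{M}_{1,2}=X/(K^*\times K^*)$ is toric, compute its fan $\Sigma$, and verify that $\Sigma$ is obtained from the standard fan of $\mathbb{P}(1,2,3)$ (rays $(-2,-3)$, $(1,0)$, $(0,1)$) by inserting the ray $(2,3)$ into the smooth cone $\langle(1,0),(0,1)\rangle$ corresponding to the fixed point $[1:0:0]$. By the construction recalled in Section \ref{genwbu}, this star subdivision is exactly the weighted blow-up with weights $(2,3)$ at $[1:0:0]$, which yields the statement and produces the fan in the picture.

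To produce the toric structure I would use the affine charts of the quotient found in the proof of Proposition \ref{M12}. On the chart $\{z\neq 0\}$ one sets $z=1$ and is left with the quotient of an open subset of $\mathbb{A}^3_{(x,y,a)}$ by $K^*$ acting with weights $(2,3,4)$; since $\gcd(2,3,4)=1$, the locus $\{xya\neq 0\}$ descends to a dense two-dimensional torus $T\cong (K^*)^3/K^*$ inside $\overline{M}_{1,2}$, and the residual torus $T$ acts on this chart. The key point is that this action extends regularly to all of $\overline{M}_{1,2}$: using the two-chart cover of Proposition \ref{M12} together with normality and separatedness one checks that the action is everywhere defined, so $\overline{M}_{1,2}$ is a normal toric surface with dense torus $T$.

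Next I would read off the fan. A complete toric surface with four rays has exactly four torus-fixed points, and these must be the four quotient singularities listed in Proposition \ref{M12}; the local models $\frac{1}{n}(1,\ast)$ there fix the primitive ray generators and the four two-dimensional cones, of indices $3,3,2,2$. To pin down the \emph{global} fan I would identify the torus-invariant curves geometrically. The divisor $\Delta_1$, which has negative self-intersection and whose contraction is foreshadowed in the Preliminaries, is the exceptional ray $(2,3)$; the curve $\Delta_{irr}$ together with the closures of the loci of curves with extra automorphisms (the $j=0$ and $j=1728$ loci, meeting the interior singular points $E_6$ of type $\frac{1}{3}(1,2)$ and $E_4$ of type $\frac{1}{2}(1,1)$) give the remaining three rays $(-2,-3)$, $(1,0)$, $(0,1)$. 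Contracting $\Delta_1$ deletes the ray $(2,3)$ and merges the cone it subdivides, producing precisely the fan of $\mathbb{P}(1,2,3)$ with $[1:0:0]$ the image of $\Delta_1$; moreover the two singularities $\frac{1}{2}(1,1)$ and $\frac{1}{3}(1,1)$ created on $\Delta_1$ are exactly those predicted by \eqref{singwbu}, confirming the identification with the $(2,3)$-weighted blow-up.

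The main obstacle is the first step: the quotient of the non-toric hypersurface $X$ need not be toric a priori, so I must genuinely establish that the residual torus action is everywhere regular, and then compute the gluing of the two charts correctly. Matching the four singularity types of Proposition \ref{M12} with those of Section \ref{genwbu} is a necessary consistency check but does \emph{not} by itself determine the surface. I would therefore carry out the overlap computation explicitly, realizing each chart as an open subset of a weighted projective space (an honest toric variety) and extracting the primitive ray generators and cones directly from the two local quotient presentations; the orientations and indices then force the fan to be the $\Sigma$ drawn above.
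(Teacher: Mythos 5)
Your overall strategy (make $\overline{M}_{1,2}$ toric intrinsically, then recognize its fan as the star subdivision of the fan of $\mathbb{P}(1,2,3)$ along the ray $(2,3)$) is legitimate and genuinely different from the paper's, but it founders exactly at the point you flag as the ``key point'': the torus you chose is the wrong one, and the extension statement you plan to verify is \emph{false}. Work on the chart $\{z\neq 0\}$ with your coordinates $(x,y,a)$ of weights $(2,3,4)$, so that $b=y^2-x^3-ax$. The points excluded from this chart are those with $(a,b)=(0,0)$, i.e.\ the locus $W=\{a=0,\ y^2=x^3\}$, and $W\setminus\{0\}$ is a single $K^*$-orbit (the orbit of $(1,1,0)$). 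Hence the chart is $\mathbb{P}(2,3,4)\setminus\{[1:1:0]\}$, and the removed point is \emph{not} a fixed point of your torus $T_0$ (the image of $\{xya\neq 0\}$): it lies in the one-dimensional orbit inside the invariant curve $\{a=0\}$. Consequently $T_0$ does not even act on this chart, since translations move $[1:1:0]$ along its orbit and so do not preserve the complement; and its translation action cannot extend regularly to $\overline{M}_{1,2}$ at all. Indeed, any regular extension would leave invariant each irreducible component of $\overline{M}_{1,2}\setminus T_0$, in particular $\Delta_1$, hence would preserve the chart $\overline{M}_{1,2}\setminus\Delta_1$; by separatedness it would agree there with the standard toric action of $T_0$ on $\mathbb{P}(2,3,4)$, which does not preserve $\mathbb{P}(2,3,4)\setminus\{[1:1:0]\}$ --- a contradiction. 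For the same reason your list of rays is wrong: $\Delta_{irr}$ is the discriminant curve $\{4a^3+27b^2=0\}$, which is not a boundary curve of any toric structure on $\overline{M}_{1,2}$; the fourth invariant curve is instead the (non-modular) locus $\{x=0\}$, i.e.\ curves whose second marked point has Weierstrass coordinate $x=0$.

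The repair is to replace the coordinates $(x,y,a)$ by $(x,a,b)$, of weights $(1,2,3)$: the correct dense torus is the image of $\{xab\neq 0\}$, its boundary is $\Delta_1\cup F_4\cup F_6\cup\overline{\{x=0\}}$, and with this identification the chart $\{z\neq0\}$ becomes $\mathbb{P}(1,2,3)\setminus\{[1:0:0]\}$, the complement of a smooth torus-fixed point, which \emph{is} an invariant open toric surface. But justifying that elimination of $y$ induces an isomorphism of the quotient onto $\mathbb{P}(1,2,3)\setminus\{[1:0:0]\}$ is precisely the content of the paper's proof: it constructs the explicit morphism $f(x,y,z,a,b)=(x,az^2,bz^3)$, checks on the chart $\{y\neq0\}$ that $f$ is regular and contracts $\Delta_1$ to $[1:0:0]$, shows $f$ is an isomorphism elsewhere (a point of a fiber is determined by $y$ up to a sign which the group action kills), and then identifies $f$ as a weighted blow-up via \cite[Proposition 6.2.6]{Pr}, the weights $(2,3)$ being forced by the singularities $\frac{1}{2}(1,1)$ and $\frac{1}{3}(1,1)$ on $\Delta_1$ through (\ref{singwbu}) and Proposition \ref{M12}. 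So your plan can be salvaged, but only after importing exactly this computation; as written, the central claim of your first step fails, and --- as you yourself observe --- matching the four singularity types against Section \ref{genwbu} cannot by itself determine the fan, so nothing in the remaining steps compensates for that failure.
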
   
\begin{proof}
Recall the description of $\overline{M}_{1,2}$ given via (\ref{act12}). On the chart $\mathcal{U}_{z} := \{z\neq 0\}$ we define a morphism 
\begin{equation}\label{fuz}
\begin{array}{cccc}
f_{\mathcal{U}_{z}}: & \mathcal{U}_{z} & \longrightarrow & \mathbb{P}(1,2,3)\\
 & (x,y,z,a,b) & \longmapsto & (x,az^{2},bz^{3})
\end{array}
\end{equation}
The action of $K^{*}\times K^{*}$ on this triple is given by $(\xi\lambda^{2},\xi^{2}\lambda^{4},\xi^{3}\lambda^{6})$, and $f_{\mathcal{U}_{z}}$ is indeed a well defined morphism to $\mathbb{P}(1,2,3)$. Note that the morphism $f_{\mathcal{U}_{z}}$ maps the two singular point in $M_{1,2}$ we found in Proposition \ref{M12} in the points $[0:1:0],[0:0:1]\in\mathbb{P}(1,2,3)$, which are the only singularities of the weighted projective plane and of the same type of the singularities on $M_{1,2}$.\\
On $\mathcal{U}_{y}:=\{y \neq 0\}$ the equation of $\overline{M}_{1,2}$ is $z=x^{3}+axz^{2}+bz^{3}$. So, as explained in the proof of Proposition \ref{M12} $x$ is a local parameter near $z=0$. We can consider the morphism 
$$f_{\mathcal{U}_{y}}(x,y,z,a,b) = \left( 1,a\left(\frac{x^{2}+az^{2}}{1-bz^{2}}\right)^{2}, b\left(\frac{x^{2}+az^{2}}{1-bz^{2}}\right)^{3}\right).$$
From this formulation it is clear that $f_{\mathcal{U}_{y}}$ is defined even on the locus $\{x = 0\}$ and the divisor $\Delta_{1} = \{x=z=0\}$ is contracted in the smooth point $[1:0:0]$ of $\mathbb{P}(1,2,3)$. On $\mathcal{U}_{z}\cap\mathcal{U}_{y}$ we have $\frac{z}{x} = \frac{x^{2}+az^{2}}{1-bz^{2}}$ and $f_{\mathcal{U}_{z}} = f_{\mathcal{U}_{y}}$, so $f_{\mathcal{U}_{z}}, f_{\mathcal{U}_{y}}$ glue to a morphism 
$$f:\overline{M}_{1,2}\rightarrow \mathbb{P}(1,2,3).$$
Since, by \cite{Mor} the Picard number of $\overline{M}_{1,2}$ is two, $\Delta_1$ must be the only divisor contracted by $f$. 
Let $\xi\in \mathbb{P}(1,2,3)$ be a point $\xi\neq [1:0:0]$. Since by (\ref{weier}) $z = 0$ forces $x = 0$ we have that $f^{-1}(\xi)$ is contained in the subset $\mathcal{U}_z$ where $z\neq 0$, and we may consider the expression for $f_{\mathcal{U}_z}$ in (\ref{fuz}). By (\ref{weier}) a point in the fiber $f_{\mathcal{U}_z}^{-1}(\xi)$ is determined, up to a sign, by the variable $y$. Therefore, in the quotient (\ref{act12}) such a point in uniquely determined. This means that $f_{|\overline{M}_{1,2}\setminus\Delta_1}:\overline{M}_{1,2}\setminus\Delta_1\rightarrow\mathbb{P}(1,2,3)\setminus\{[1:0:0]\}$ is an isomorphism. We conclude that $f:\overline{M}_{1,2}\rightarrow\mathbb{P}(1,2,3)$ is indeed a birational morphism with exceptional locus $\Delta_1$. Then, by \cite[Proposition 6.2.6]{Pr} $f$ is a weighted blow-up of $\mathbb{P}(1,2,3)$ in $[1:0:0]$ and $\Delta_{1}$ is the corresponding exceptional divisor. By Proposition \ref{M12} we know that $\overline{M}_{1,2}$ has two singularities of type $\frac{1}{2}(1,1)$, $\frac{1}{3}(1,1)$ on $\Delta_1$. By Proposition \ref{M12} we know that the singularities $\frac{1}{2}(1,1),\frac{1}{3}(1,1)\in\Delta_1$ corresponds to $b=0$, $a=0$ respectively. By (\ref{singwbu}) this forces the weights of the blow-up to be $(2,3)$.
\end{proof}

With this description of $\overline{M}_{1,2}$ we can easily compute its automorphism group.

\begin{Proposition}\label{autm12}
Let $K$ be an algebraically closed field with $\ch(K)\neq 2,3$. The automorphism group of $\overline{M}_{1,2}$ is isomorphic to $(K^{*})^{2}$.
\end{Proposition}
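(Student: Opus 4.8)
The goal is to compute $\Aut(\overline{M}_{1,2})$ given the explicit toric description from Theorem \ref{wbu}, namely that $\overline{M}_{1,2}$ is the weighted blow-up of $\mathbb{P}(1,2,3)$ at its smooth torus-fixed point $[1:0:0]$ with weights $(2,3)$. The plan is to exploit this toric structure together with the fact that automorphisms must preserve the distinguished geometry (the boundary divisors and the singularities).

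\begin{proof}[Proof sketch]
The plan is to use the toric presentation from Theorem \ref{wbu}. First I would observe that any automorphism of $\overline{M}_{1,2}$ must preserve the two boundary divisors $\Delta_{irr}$ and $\Delta_1$, since these are intrinsically defined (they carry the modular meaning of nodal and reducible curves, and are distinguished homologically---$\Delta_1$ is the unique contractible $(-1)$-type curve, while $\Delta_{irr}$ is the strict transform of a coordinate divisor of $\mathbb{P}(1,2,3)$). Because $\Delta_1$ is the exceptional divisor of the weighted blow-up $f:\overline{M}_{1,2}\to\mathbb{P}(1,2,3)$, any automorphism descends to an automorphism of $\mathbb{P}(1,2,3)$ fixing the point $[1:0:0]$.

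Next I would compute $\Aut(\mathbb{P}(1,2,3))$. For a well-formed weighted projective space the automorphism group is generated by the torus acting diagonally together with the automorphisms coming from the graded ring: since the weights $1,2,3$ are distinct and the only polynomial relations among coordinates of matching degree are $x_0^2$ in degree $2$ and $x_0^3, x_0 x_1$ in degree $3$, one gets the group of graded automorphisms of $K[x_0,x_1,x_2]$ with $\deg x_i = 1,2,3$. Modulo the obvious scalings this yields upper-triangular transformations $x_1\mapsto x_1+c x_0^2$, $x_2\mapsto x_2 + d x_0 x_1 + e x_0^3$, together with the two-dimensional torus. Imposing that the induced automorphism fix $[1:0:0]$ and preserve the singular locus, and then that it lift to $\overline{M}_{1,2}$ preserving the two distinct singularities $\frac12(1,1)$ and $\frac13(1,1)$ on $\Delta_1$, should cut the group down. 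The constraint that the distinct singularity types be preserved forbids swapping the relevant coordinate directions, and the requirement of compatibility with the blow-up centered at the single point $[1:0:0]$ rules out the unipotent parts, leaving precisely the two-dimensional torus $(K^*)^2$.

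The cleanest way to finish is the toric route: an automorphism preserving the torus-invariant boundary (the union of $\Delta_1$, $\Delta_{irr}$, and the third coordinate divisor) induces an automorphism of the fan $\Sigma$, i.e.\ a lattice automorphism of $N=\mathbb{Z}^2$ permuting the rays. Inspecting the fan in the picture, the four rays have distinct primitive generators and the singularity types attached to the two-dimensional cones are pairwise distinct, so the only fan automorphism is the identity; hence $\Aut(\overline{M}_{1,2})$ is exactly the big torus $(K^*)^2$ acting on the toric variety. I expect the main obstacle to be the verification that there are no extra automorphisms beyond the torus---that is, ruling out the unipotent graded automorphisms of $\mathbb{P}(1,2,3)$ after blow-up and confirming the fan has no nontrivial symmetry; this requires carefully tracking which coordinate subspaces are blown up and matching the resulting singularity types $\frac12(1,1)$ and $\frac13(1,1)$, whose distinctness is precisely what kills any residual symmetry.
\end{proof}
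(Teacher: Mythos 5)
Your proposal follows the same skeleton as the paper's proof (stabilize $\Delta_1$, descend along the weighted blow-up $f$ to $\mathbb{P}(1,2,3)$ fixing $[1:0:0]$, analyze graded automorphisms of $K[x_0,x_1,x_2]$, and use the toric structure to produce the section $(K^*)^2\hookrightarrow\Aut(\overline{M}_{1,2})$), but it has two genuine problems. First, a factual error: $\Delta_{irr}$ is \emph{not} a coordinate divisor, and it is not torus-invariant. Under $f$ it maps to the discriminant curve $\{4x_1^3+27x_2^2=0\}\subset\mathbb{P}(1,2,3)$. The torus-invariant boundary actually consists of four curves: $\Delta_1$ and the strict transforms of the three coordinate divisors, namely the fibers $F_4=f^{-1}\{x_2=0\}$, $F_6=f^{-1}\{x_1=0\}$ of the forgetful map $\pi:\overline{M}_{1,2}\to\overline{M}_{1,1}$ over $[E_4]$, $[E_6]$, and the strict transform of $\{x_0=0\}$. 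So your ``cleanest'' toric finish starts from a misidentification; worse, the hypothesis it needs --- that an arbitrary automorphism preserves the full toric boundary --- is precisely what must be proven, and appeals to ``modular meaning'' are not available here: an automorphism of the abstract variety carries no a priori compatibility with the modular structure (that is the whole point of such theorems).

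Second, and this is the crux, the step killing the unipotent part is asserted rather than proven, and the reason you give is insufficient. After forcing $[1:0:0]$ to be fixed (which kills $c$ and $e$), the residual unipotent automorphism $x_2\mapsto x_2+d\,x_0x_1$ fixes $[1:0:0]$ \emph{and} both singular points $[0:1:0]$, $[0:0:1]$ of $\mathbb{P}(1,2,3)$, so neither fixing the center nor preserving the singular locus excludes it; and ``compatibility with the blow-up'' is not a usable general principle, since for an ordinary blow-up every automorphism fixing the center lifts. What actually eliminates $d$ is one of the following. The paper's route: any automorphism preserves the two extremal rays of the Mori cone, one spanned by $\Delta_1$ and the other by the fiber class of $\pi$, hence permutes the fibers of $\pi$; since it also fixes the four singular points (their types $\frac{1}{2}(1,1)$, $\frac{1}{3}(1,2)$, $\frac{1}{3}(1,1)$, $\frac{1}{2}(1,1)$ being distinguished by type and by lying on or off $\Delta_1$), it must stabilize $F_4$ and $F_6$, so the descended automorphism stabilizes $\{x_2=0\}$ and $\{x_1=0\}$, forcing $d=0$. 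Alternatively, a direct computation: in local coordinates $s=x_1/x_0^2$, $t=x_2/x_0^3$ at $[1:0:0]$, the map $t\mapsto \lambda_2 t+ds$ with $d\neq 0$ does not preserve the valuation $v(s)=2$, $v(t)=3$ defining the $(2,3)$-weighted blow-up, so it admits no biregular lift. Either argument closes your outline, and then the torus section finishes the proof exactly as in the paper; but without one of them the proposal has a gap exactly where the content of the proposition lies.
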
 
\begin{proof}
Let $\pi:\overline{M}_{1,2}\rightarrow \overline{M}_{1,1}$ be a forgetful morphism, and $F_i = \pi^{-1}([E_i])$ for $i = 4,6$. Let $f:\overline{M}_{1,2}\rightarrow\mathbb{P}(1,2,3) = \Proj(K[x_0,x_1,x_2])$ be the weighted blow-up described in Theorem \ref{wbu}, where the grading on $K[x_0,x_1,x_2]$ is given by $\deg(x_i) = i+1$. Any $\phi\in\Aut(\overline{M}_{1,2})$ must stabilize $\Delta_1$, $F_4$ and $F_6$.\\ 
Therefore, we have an injective morphism $\chi:\Aut(\overline{M}_{1,2})\rightarrow G\subset\Aut(\mathbb{P}(1,2,3))$ where $G$ consists of automorphisms of $\mathbb{P}(1,2,3)$ fixing $[1:0:0]$ and stabilizing $f(F_4) = \{x_2 = 0\}$ and $f(F_6) = \{x_1 = 0\}$. To conclude that $G\cong (K^*)^{2}$ it is enough to recall that the automorphisms of $\mathbb{P}(1,2,3)$ are the automorphisms of the graded $K$-algebra $K[x_{0},x_1,x_{2}]$. By Theorem \ref{wbu} $\overline{M}_{1,2}$ is toric. To conclude it is enough to observe that the inclusion $i:(K^*)^2\hookrightarrow\Aut(\overline{M}_{1,2})$ is a section of $\chi$.
\end{proof}

For the rest of this section $K$ will be an algebraically closed field of characteristic zero. The cohomology of a toric invariant divisor $D$ in a toric variety $X_{\Sigma}$ can be computed in terms of combinatorial data encoded in the dual lattice of the variety, and in a polytope $P_{D}$ associated to $D$, see \cite[Chapters 4, 9]{CLS}. This leads us to the following result. 
\begin{Proposition}\label{cwbu}
Let $X_{\Sigma}$ we the weighted blow-up with weights $(2,3)$ of $\mathbb{P}(1,2,3)$ in $[1:0:0]$. Then $h^{0}(X_{\Sigma},T_{X_{\Sigma}})=2$ and $h^{i}(X_{\Sigma},T_{X_{\Sigma}})=0$ for $i =1,2$.
\end{Proposition}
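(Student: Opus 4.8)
The plan is to realize $T_{X_\Sigma}$ as the quotient term of the generalized Euler sequence of the simplicial toric surface $X_\Sigma$, reduce to the cohomology of the torus-invariant prime divisors, and evaluate the latter character by character from the combinatorics of the fan. Throughout I take $T_{X_\Sigma}=(\Omega^1_{X_\Sigma})^{\vee}$, the reflexive tangent sheaf. First I record the fan: by Theorem \ref{wbu}, $X_\Sigma$ is the complete toric surface with rays $u_0=(-2,-3)$, $u_1=(1,0)$, $u_2=(0,1)$, $u_3=(2,3)$ and maximal cones $\langle u_0,u_1\rangle,\langle u_0,u_2\rangle,\langle u_1,u_3\rangle,\langle u_2,u_3\rangle$; around the origin the rays occur in the cyclic order $u_1,u_3,u_2,u_0$. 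Writing $D_0,\dots,D_3$ for the corresponding invariant divisors, and using that $X_\Sigma$ is simplicial with no torus factor and that $\operatorname{Cl}(X_\Sigma)\otimes_{\mathbb Z}\mathcal O_{X_\Sigma}\cong\mathcal O_{X_\Sigma}^{\oplus 2}$ in characteristic zero, I dualize the Euler sequence of \cite[Theorem 8.1.6]{CLS} to obtain
\[
0\to\mathcal O_{X_\Sigma}^{\oplus 2}\to\bigoplus_{i=0}^{3}\mathcal O_{X_\Sigma}(D_i)\to T_{X_\Sigma}\to 0.
\]

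Next I pass to cohomology. Since a complete toric variety has $H^0(\mathcal O_{X_\Sigma})=K$ and $H^{>0}(\mathcal O_{X_\Sigma})=0$, the long exact sequence collapses to $h^0(T_{X_\Sigma})=\sum_i h^0(\mathcal O_{X_\Sigma}(D_i))-2$, together with isomorphisms $H^1(T_{X_\Sigma})\cong\bigoplus_i H^1(\mathcal O_{X_\Sigma}(D_i))$ and $H^2(T_{X_\Sigma})\cong\bigoplus_i H^2(\mathcal O_{X_\Sigma}(D_i))$. Everything thus reduces to the three cohomology groups of each $\mathcal O_{X_\Sigma}(D_i)$, which I compute with the character decomposition
\[
H^p(X_\Sigma,\mathcal O_{X_\Sigma}(D))=\bigoplus_{m\in M}\widetilde H^{p-1}(V_{D,m};K)
\]
of \cite[Chapter 9]{CLS}, where for $D=\sum a_\rho D_\rho$ the set $V_{D,m}\subseteq S^1$ is the subcomplex of the fan spanned by the \emph{negative} rays, namely those $u_\rho$ with $\langle m,u_\rho\rangle<-a_\rho$; for $D=D_i$ one has $a_\rho=\delta_{i\rho}$.

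For $H^0$ the criterion is $V_{D_i,m}=\emptyset$, i.e. $m$ lies in the polytope $P_{D_i}=\{m:\langle m,u_\rho\rangle\ge-\delta_{i\rho}\}$; a direct lattice count against the four inequalities gives $P_{D_i}\cap M=\{0\}$ for each $i$, whence $\sum_i h^0(\mathcal O_{X_\Sigma}(D_i))=4$ and $h^0(T_{X_\Sigma})=2$ (consistently with $\Aut(\overline M_{1,2})\cong (K^*)^2$ from Proposition \ref{autm12}). For $H^2$ the top-degree criterion requires $V_{D_i,m}=S^1$, i.e. $\langle m,u_\rho\rangle<-\delta_{i\rho}$ for all four rays; as the $u_\rho$ positively span $N_{\mathbb R}$ this forces $m=0$, which violates the inequality at $\rho=i$, so $H^2(\mathcal O_{X_\Sigma}(D_i))=0$. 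The one genuinely combinatorial step is $H^1$, where $\widetilde H^0(V_{D_i,m})\neq 0$ precisely when the negative rays form one of the two opposite pairs $\{u_1,u_2\}$ or $\{u_0,u_3\}$ of the $4$-cycle. In the first case $u_1,u_2$ negative force $m_1\le -1$ and $m_2\le -1$, hence $2m_1+3m_2\le -5$, contradicting positivity of $u_3$, which needs $2m_1+3m_2\ge-\delta_{i3}\ge -1$; in the second case $u_0,u_3$ negative demand $2m_1+3m_2>0$ and $2m_1+3m_2<0$ at once. Both are impossible for every $i$, so $H^1(\mathcal O_{X_\Sigma}(D_i))=0$ and therefore $H^1(T_{X_\Sigma})=0$.

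I expect the main difficulty to be bookkeeping rather than depth: one must set up the generalized Euler sequence for the merely simplicial surface so that its dual genuinely computes the reflexive tangent sheaf, and one must apply the character-wise formula with the correct convention $\widetilde H^{-1}(\emptyset)=K$. Once the fan is fixed, the vanishing of $H^1$ and $H^2$ is the finite case-check above, driven entirely by the specific primitive generators $(1,0),(2,3),(0,1),(-2,-3)$.
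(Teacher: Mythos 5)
Your proof is correct and takes essentially the same route as the paper: both dualize the generalized Euler sequence of \cite[Theorem 8.1.6]{CLS} to reduce the cohomology of $T_{X_{\Sigma}}$ to that of the four invariant divisors $\mathcal{O}_{X_{\Sigma}}(D_{\rho})$, and then obtain $h^{0}(\mathcal{O}_{X_{\Sigma}}(D_{\rho}))=1$ and $h^{1}=h^{2}=0$ from the combinatorial description of toric divisor cohomology. The only difference is that the paper simply cites \cite[Propositions 4.3.3 and 9.1.6]{CLS} for these divisor computations, whereas you carry out the lattice-point count and the connectedness check of the negative support character by character.
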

\begin{proof}
Let $D_{\rho}$ be the toric divisor corresponding to the ray $u_{\rho}$ in the fan $\Sigma$ of Theorem \ref{wbu}, and let $\mathcal{O}_{X_{\Sigma}}(D_{\rho})$ be the associated sheaf. The cohomology of $\mathcal{O}_{X_{\Sigma}}(D_{\rho})$ can be computed in terms of lattice objects. Indeed, by \cite[Proposition 4.3.3]{CLS} we have $h^0(X_{\Sigma},\mathcal{O}_{X_{\Sigma}}(D_{\rho})) = 1$ for $\rho = 1,2,3,4$, while by \cite[Proposition 9.1.6]{CLS} we get $h^1(X_{\Sigma},\mathcal{O}_{X_{\Sigma}}(D_{\rho})) = h^2(X_{\Sigma},\mathcal{O}_{X_{\Sigma}}(D_{\rho})) = 0$ for $\rho = 1,2,3,4$.\\
Since the class group of $X_{\Sigma}$ if free of rank two, by \cite[Theorem 8.1.6]{CLS} we have the following exact sequence
$$0\mapsto \Omega_{X_{\Sigma}}\rightarrow \bigoplus_{\rho=1}^4\mathcal{O}_{X_{\Sigma}}(-D_{\rho})\rightarrow \mathcal{O}_{X_{\Sigma}}^{\oplus 2}\mapsto 0$$
and dualizing we get 
$$0\mapsto \mathcal{O}_{X_{\Sigma}}^{\oplus 2}\rightarrow \bigoplus_{\rho=1}^4\mathcal{O}_{X_{\Sigma}}(D_{\rho})\rightarrow T_{X_{\Sigma}}\mapsto 0$$
To conclude it is enough to take cohomology and to use the results on the cohomology of the toric divisors in the first part of the proof. 
\end{proof}

\begin{Remark}
By \cite[Proposition 6.4.4]{CLS} we can compute the intersection matrix of the $D_{\rho}$'s. This is given by
$$
(D_i\cdot D_j)=\left(\begin{array}{cccc}
0 & \frac{1}{3} & 0 & \frac{1}{3}\\ 
\frac{1}{3} & -\frac{1}{6} & \frac{1}{2} & 0\\ 
0 & \frac{1}{2} & 0 & \frac{1}{2} \\ 
\frac{1}{3} & 0 & \frac{1}{2} & \frac{1}{6}
\end{array}\right) 
$$
Therefore $D_{\rho}$ for $\rho = 1,3,4$ is nef, and the vanishing of $H^{i}(X_{\Sigma},\mathcal{O}_{X_{\Sigma}}(D_{\rho}))$ for $i = 1,2$ and $\rho = 1,3,4$ can be deduced also from Demazure's vanishing theorem \cite[Theorem 9.2.3]{CLS}.
\end{Remark}

\begin{Remark}
Note that by Proposition \ref{autm12} $\Aut(\overline{M}_{1,2})\cong (K^{*})^{2}$, and indeed by Proposition \ref{cwbu} we have that $T_{[Id]}\Aut(\overline{M}_{1,2})\cong H^0(\overline{M}_{1,2},T_{\overline{M}_{1,2}})$ is of dimension two.
\end{Remark}

Now, we are ready to compute the Kuranishi family of $\overline{M}_{1,2}$ by computing the deformations of the singularities of $\overline{M}_{1,2}$ described in Proposition \ref{M12}. See \cite{Ar} for a survey on deformations of singularities.

\begin{Theorem}\label{nonrig}
The coarse moduli space $\overline{M}_{1,2}$ does not have locally trivial deformations, while its family of first order infinitesimal deformations is non-singular of dimension six and the general deformation is smooth.
\end{Theorem}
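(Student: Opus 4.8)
The plan is to reduce the global deformation theory of the surface $X:=\overline{M}_{1,2}$ to the local deformation theory of its four quotient singularities, using the cohomology vanishing of Proposition \ref{cwbu}. Write $T_X=\Hom(\Omega_X,\mathcal{O}_X)$ for the tangent sheaf and $\mathcal{T}^q_X=\Exts^q(\Omega_X,\mathcal{O}_X)$ for the local deformation sheaves; for $q\ge 1$ these are skyscrapers supported on the four singular points $p_1,\dots,p_4$ of Proposition \ref{M12}. Since locally trivial deformations are classified by $H^1(X,T_X)$, the first assertion is immediate: Proposition \ref{cwbu} gives $H^1(X,T_X)=0$, and a deformation functor with vanishing tangent space is trivial, so $X$ has no non-trivial locally trivial deformations.

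Next I would invoke the local-to-global spectral sequence $H^p(X,\mathcal{T}^q_X)\Rightarrow \Ext^{p+q}(\Omega_X,\mathcal{O}_X)$, whose low-degree exact sequence reads $0\to H^1(X,T_X)\to \Ext^1(\Omega_X,\mathcal{O}_X)\to H^0(X,\mathcal{T}^1_X)\to H^2(X,T_X)$ (see \cite[Section 3.3.3]{Se}). By Proposition \ref{cwbu} both outer groups vanish, so the space of first order deformations is $\Ext^1(\Omega_X,\mathcal{O}_X)\cong H^0(X,\mathcal{T}^1_X)=\bigoplus_{i=1}^4 T^1_{X,p_i}$, the direct sum of the local Tjurina modules of the four singularities.

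The second step is the local computation. The two singularities of type $\frac{1}{2}(1,1)$ are $A_1$ and the one of type $\frac{1}{3}(1,2)$ is $A_2$; these are rational double points, hence complete intersection (hypersurface) singularities whose $T^1$ has dimension equal to the Tjurina number, namely $1$, $1$ and $2$. The singularity of type $\frac{1}{3}(1,1)$ is the cone over the twisted cubic, a codimension-two Cohen-Macaulay (Hilbert-Burch) singularity cut out by the $2\times 2$ minors of a $2\times 3$ Hankel matrix of linear forms in $\mathbb{A}^4$; I would compute its $T^1$ by perturbing the six matrix entries by constants and dividing out the trivial deformations coming from the four translations of $\mathbb{A}^4$ (all other graded pieces vanishing), which gives $\dim T^1 = 6-4 = 2$. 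Summing the four contributions yields $\dim \Ext^1(\Omega_X,\mathcal{O}_X)=1+1+2+2=6$.

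Finally I would prove unobstructedness and smoothness of the general fibre. Each singularity has $\mathcal{T}^2=0$: the rational double points are complete intersections, and the cone over the twisted cubic is unobstructed because it is a codimension-two Cohen-Macaulay (determinantal) singularity. Feeding $H^0(X,\mathcal{T}^2_X)=\bigoplus_i T^2_{X,p_i}=0$, $H^1(X,\mathcal{T}^1_X)=0$ (skyscraper) and $H^2(X,T_X)=0$ into the spectral sequence gives $\Ext^2(\Omega_X,\mathcal{O}_X)=0$, so the Kuranishi family is non-singular of dimension $\dim\Ext^1(\Omega_X,\mathcal O_X)=6$. Since $H^i(X,T_X)=0$ for $i=1,2$ and the $\mathcal{T}^q_X$ are skyscrapers, the comparison morphism from global deformations of $X$ to the product of local deformations of the $(X,p_i)$ is an isomorphism of smooth functors; as each singularity is smoothable and smoothing is an open dense condition on each local base, the general member of the Kuranishi family smooths all four singularities at once and is therefore smooth. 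The main obstacle is the explicit determination of $T^1$ and the vanishing of $T^2$ for the non-Gorenstein singularity $\frac{1}{3}(1,1)$, together with the justification that the global deformations split as the product of the local ones.
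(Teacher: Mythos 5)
Your proposal is correct, and its global skeleton coincides with the paper's proof: both reduce the theorem to (i) the vanishings $H^{1}(X,T_X)=H^{2}(X,T_X)=0$ for $X=\overline{M}_{1,2}$ coming from the toric description (Proposition \ref{cwbu}), and (ii) local computations at the four quotient singularities of Proposition \ref{M12}, glued together by the local-to-global sequence and spectral sequence for $\Ext$. Where you genuinely diverge is in the local half and in the endgame. The paper computes $\mathcal{E}xt^{1}$ and $\mathcal{E}xt^{2}$ at each singular point by writing explicit free resolutions of $\Omega_R$ (Jacobian and syzygy matrices), obtaining the dimensions $2,1,1,2$ and the vanishing $\Ext^{2}(\Omega_R,R)=0$ by hand; you instead identify the singularities as $A_1$, $A_1$, $A_2$ (Tjurina numbers $1,1,2$) and treat $\frac{1}{3}(1,1)$ via its Hilbert--Burch/determinantal structure, counting constant perturbations of the Hankel matrix modulo translations ($6-4=2$). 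That count is legitimate precisely because in codimension two every deformation is determinantal, but it silently uses the fact that $T^1$ of the cone over the twisted cubic is concentrated in degree $-1$; the paper's resolution needs no such input. For unobstructedness there is one caveat: Schaps' theorem for codimension-two Cohen--Macaulay singularities gives smoothness of the local versal base, not literally $\mathcal{E}xt^{2}(\Omega_R,R)=0$, so your spectral-sequence route to $\Ext^{2}(\Omega_X,\mathcal{O}_X)=0$ has a small gap as stated; the vanishing is true for this particular singularity, but it must be checked independently (as the paper does from the explicit resolution). Your alternative functorial argument closes that gap without needing it: the morphism $\mathrm{Def}_X\rightarrow \prod_i \mathrm{Def}_{(X,p_i)}$ is smooth since $H^2(X,T_X)=0$ and bijective on tangent spaces since $H^1(X,T_X)=0$, and the targets are unobstructed, so the hull of $\mathrm{Def}_X$ is smooth of dimension six. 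Finally, your last step --- each singularity is smoothable, smoothings are dense in each local base, hence the general global deformation is smooth --- is actually more explicit than the paper, which leaves the ``general deformation is smooth'' clause implicit after establishing unobstructedness.
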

\begin{proof}
The first order infinitesimal deformations of the surface $\overline{M}_{1,2}$ are parametrized by the group $\Ext^{1}(\Omega_{\overline{M}_{1,2}},\mathcal{O}_{\overline{M}_{1,2}})$. The sheaf $\mathcal{E}xt^{1}(\Omega_{\overline{M}_{1,2}},\mathcal{O}_{\overline{M}_{1,2}})$ is supported on the singularities and since $\overline{M}_{1,2}$ has isolated singularities $\mathcal{E}xt^{1}(\Omega_{\overline{M}_{1,2}},\mathcal{O}_{\overline{M}_{1,2}})$ can be computed separately for each singular point.\\
Let us consider the singular point $p\in \overline{M}_{1,2}$ of type $\frac{1}{3}(1,2)$. Then, \'etale locally, in a neighborhood of $p$ the surface $\overline{M}_{1,2}$ is isomorphic to $\mathbb{A}^{2}/\mu_{3}$ where the action is given by 
$$
\begin{array}{ccc}
\mu_{3}\times\mathbb{A}^{2} & \longrightarrow & \mathbb{A}^{2}\\
(\epsilon,x_{1},x_{2}) & \longmapsto & (\epsilon x_{1},\epsilon^{2}x_{2})
\end{array}
$$
The invariant polynomials with respect to this action are clearly $x_{1}^{3},x_{2}^{3},x_{1}x_{2}$. Therefore, \'etale locally, in a neighborhood of $p$ the surface $\overline{M}_{1,2}$ is isomorphic to an \'etale neighborhood of the singularity
$$S = \{f(x,y,z) = z^{3}-xy = 0\}\subset\mathbb{A}^{3}$$
Let $R = K[x,y,z]/(z^{3}-xy)$, and let us consider the free resolution
    \[
  \begin{tikzpicture}[xscale=1.5,yscale=-1.2]
    \node (A0_0) at (0, 0) {$0\mapsto R$};
    \node (A0_1) at (1, 0) {$R^{\oplus 3}$};
    \node (A0_2) at (2, 0) {$\Omega_R\mapsto 0$};
    \path (A0_0) edge [->]node [auto] {$\scriptstyle{\psi_J}$} (A0_1);
    \path (A0_1) edge [->]node [auto] {$\scriptstyle{}$} (A0_2);
  \end{tikzpicture}
  \]
of $\Omega_R$, where $\psi_J$ is the matrix of the partial derivatives of $f = z^3-xy$. Therefore, we get 
\begin{equation}\label{ext1}
\Ext^{1}(\Omega_{R},R)\cong R/\im(\psi_J^{t})\cong K[x,y,z]/(z^{3}-xy,-y,-x,3z^{2})\cong K[z]/(z^{2}), \: \Ext^{2}(\Omega_{R},R)=0
\end{equation}
The same argument for the two singularities of type $\frac{1}{2}(1,1)$ shows that in these cases we have
\begin{equation}\label{ext2}
\Ext^{1}(\Omega_{R},R)\cong K[x,y,z]/(z^{2}-xy,-y,-x,2z)\cong K, \: \Ext^{2}(\Omega_{R},R)=0
\end{equation}
Now, let us consider the singularity of type $\frac{1}{3}(1,1)$. In this case the action is given by 
$$
\begin{array}{ccc}
\mu_{3}\times\mathbb{A}^{2} & \longrightarrow & \mathbb{A}^{2}\\
(\epsilon,x_{1},x_{2}) & \longmapsto & (\epsilon x_{1},\epsilon x_{2})
\end{array}
$$
The invariants are clearly $x_{1}^{3},x_{1}^{2}x_2,x_{1}x_{2}^2,x_2^3$. Then \'etale locally, in a neighborhood of this singularity the surface $\overline{M}_{1,2}$ is isomorphic to an \'etale neighborhood of the vertex of a cone in $\mathbb{A}^4$ over a twisted cubic in $\mathbb{P}^3$. The coordinate ring of such a cone is $R = K[x,y,z,w]/(f_1,f_2,f_3)$, where $f_1 = xw-yz$, $f_2 = y^2-xz$ and $f_3 = z^2-yw$. We get a free resolution of the module of differentials $\Omega_R$ as follows
  \[
  \begin{tikzpicture}[xscale=1.5,yscale=-1.2]
    \node (A0_0) at (0, 0) {$R^{\oplus 3}$};
    \node (A0_1) at (1, 0) {$R^{\oplus 2}$};
    \node (A0_2) at (2, 0) {$R^{\oplus 3}$};
    \node (A0_3) at (3, 0) {$R^{\oplus 4}$};
    \node (A0_4) at (4, 0) {$\Omega_R\to 0$};
     \path (A0_3) edge [->]node [auto] {$\scriptstyle{}$} (A0_4);
    \path (A0_2) edge [->]node [auto] {$\scriptstyle{\psi_J}$} (A0_3);
    \path (A0_1) edge [->]node [auto] {$\scriptstyle{\psi_S}$} (A0_2);
    \path (A0_0) edge [->]node [auto] {$\scriptstyle{\psi}$} (A0_1);
    \end{tikzpicture}
  \]
where
$$
\psi = \left(\begin{array}{ccc}
x  & z & y\\ 
-y & -w & -z
\end{array}\right),
\quad
\psi_S = \left(\begin{array}{cc}
z & y \\ 
w & z \\ 
y & x
\end{array}\right),
\quad
\psi_J = \left(\begin{array}{ccc}
w & -z & 0 \\ 
-z & 2y & -w \\ 
-y & -x & 2z \\
x & 0 & -y
\end{array}\right)
$$
Note that $\psi_S$ and $\psi_J$ are the syzygy matrix and the Jacobian matrix of the $f_i$'s respectively. We may compute $\Ext^1(\Omega_R,R)\cong \ker(\psi_S^{t})/\im(\psi_J^{t})$. Now, $\ker(\psi_S^{t})$ has the following six generator: $(-w,z,0)$, $(z,-y,0)$, $(0,y,-w)$, $(y,0,-z)$, $(0,-x,z)$, $(-x,0,y)$, and $\im(\psi_J^{t})$ is generated by $(w,-z,0)$, $(-z,2y,-w)$, $(-y,-x,2z)$, $(x,0,-y)$. Furthermore, $\ker(\psi^{t})$ is generated by $(w,z)$, $(z,y)$, $(y,x)$, which are generators for $\im(\psi_S^{t})$ as well. Therefore 
\begin{equation}\label{ext3}
\dim_{K}\Ext^1(\Omega_R,R) = 2, \: \Ext^{2}(\Omega_{R},R)=0
\end{equation}
This last fact together with (\ref{ext1}) and (\ref{ext2}) implies that 
$$h^{0}(\overline{M}_{1,2},\mathcal{E}xt^{1}(\Omega_{\overline{M}_{1,2}},\mathcal{O}_{\overline{M}_{1,2}}))= 2+1+1+2 = 6$$ 
Now, by Theorem \ref{wbu} the surface $\overline{M}_{1,2}$ is a weighted blow-up of $\mathbb{P}(1,2,3)$ in its smooth point $[1:0:0]$ that is the variety $X_{\Sigma}$ in Proposition \ref{cwbu}. Therefore, Proposition \ref{cwbu} yields $h^0(\overline{M}_{1,2},T_{\overline{M}_{1,2}})=2$ and 
\begin{equation}\label{vanT}
H^{i}(\overline{M}_{1,2},T_{\overline{M}_{1,2}}) = 0 \quad \rm{for} \quad \textit{i}= 1,2
\end{equation}
So the sequence
$$H^{1}(\overline{M}_{1,2},T_{\overline{M}_{1,2}})\rightarrow\Ext^{1}(\Omega_{\overline{M}_{1,2}},\mathcal{O}_{\overline{M}_{1,2}})\rightarrow H^{0}(\overline{M}_{1,2},\mathcal{E}xt^{1}(\Omega_{\overline{M}_{1,2}},\mathcal{O}_{\overline{M}_{1,2}}))\rightarrow H^{2}(\overline{M}_{1,2},T_{\overline{M}_{1,2}})$$
yields $\Ext^{1}(\Omega_{\overline{M}_{1,2}},\mathcal{O}_{\overline{M}_{1,2}})\cong H^{0}(\overline{M}_{1,2},\mathcal{E}xt^{1}(\Omega_{\overline{M}_{1,2}},\mathcal{O}_{\overline{M}_{1,2}}))$.
Finally, to compute the dimension of the obstruction space $\Ext^{2}(\Omega_{\overline{M}_{1,2}},\mathcal{O}_{\overline{M}_{1,2}})$ we use the local-to-global Ext spectral sequence 
$$
H^i(\overline{M}_{1,2},\mathcal{E}xt^j(\Omega_{\overline{M}_{1,2}},\mathcal{O}_{\overline{M}_{1,2}}))\Rightarrow \Ext^{i+j}(\Omega_{\overline{M}_{1,2}},\mathcal{O}_{\overline{M}_{1,2}})
$$
Clearly $H^1(\overline{M}_{1,2},\mathcal{E}xt^1(\Omega_{\overline{M}_{1,2}},\mathcal{O}_{\overline{M}_{1,2}}))=0$ because $\mathcal{E}xt^1(\Omega_{\overline{M}_{1,2}},\mathcal{O}_{\overline{M}_{1,2}})$ is supported on a zero dimensional scheme. Furthermore, by (\ref{vanT}) we have $H^2(\overline{M}_{1,2},\mathcal{E}xt^0(\Omega_{\overline{M}_{1,2}},\mathcal{O}_{\overline{M}_{1,2}}))= H^{2}(\overline{M}_{1,2},T_{\overline{M}_{1,2}}) =0$. Finally (\ref{ext1}), (\ref{ext2}) and (\ref{ext3}) yield $H^0(\overline{M}_{1,2},\mathcal{E}xt^2(\Omega_{\overline{M}_{1,2}},\mathcal{O}_{\overline{M}_{1,2}}))= 0$ as well.
\end{proof}

\section{On the deformations of the coarse moduli space $\overline{M}_{g,n}$}\label{rcms}
In this section we will study the infinitesimal deformations of the coarse moduli space $\overline{M}_{g,n}$, which is a projective normal scheme with finite quotient singularities. If $g = 0$ we have $\overline{M}_{0,n}\cong\overline{\mathcal{M}}_{0,n}$, by \cite[Theorem 2.1]{Hac} in characteristic zero, and Theorem \ref{rig} in positive characteristic we know that $\overline{M}_{0,n}$ is rigid, hence we restrict to the case $g\geq 1$.

\subsection{Deformation theory for varieties with quotient singularities}
Let $X$ be a variety over a field $K$, $A$ an Artinian $K$-algebra with residue field $K$. A deformation $X_A$ of $X$ over $\Spec(A)$ is called {\em trivial} if it is isomorphic to $X\times_K\Spec(A)$; it is {\em locally trivial} if there is an open cover of $X$ by open affines $U$ such that the induced deformation $U_A$ is trivial.\\
We recall some well-known facts about infinitesimal deformations of normal varieties. By \cite{Ill} the tangent and obstruction spaces to deformations of $X$ are given by $\Ext^1(L_X,\mathcal{O}_X)$ and $\Ext^2(L_X,\mathcal O_X)$ where $L_X$ is the cotangent complex; when $X$ is a normal variety, these spaces are actually $\Ext^1(\Omega_X,\mathcal{O}_X)$ and $\Ext^2(\Omega_X,\mathcal{O}_X)$ respectively. Locally trivial infinitesimal deformations have as tangent and obstruction spaces $H^1(X,T_X)$ and $H^2(X,T_X)$, respectively. 

\begin{Remark} By the exact sequence 
$$0\mapsto H^{1}(X,T_{X})\rightarrow\Ext^{1}(\Omega_{X},\mathcal{O}_{X})\rightarrow H^{0}(X,\mathcal{E}xt^{1}(\Omega_{X},\mathcal{O}_{X}))\rightarrow H^{2}(X,T_{X})\to \Ext^2(\Omega_X,\mathcal O_X)$$
induced by the local-to-global spectral sequence for Ext, if $H^0(X,\mathcal{E}xt^1(\Omega_X,\mathcal O_X))=0$ then all deformations are locally trivial, while if $H^1(X,T_X)=0$ then all locally trivial deformations are trivial.
\end{Remark}

\begin{Remark}\label{highcodim} 
The sheaf $\mathcal{E}xt^1(\Omega_X,\mathcal O_X)$ is supported on the singular locus of $X$; if $X$ has quotient singularities, then by \cite[Lemmas 2.4, 2.5]{Fan} the sheaf of cohomology with supports $\mathcal H^0_Z(\mathcal{E}xt^1(\Omega_X,\mathcal O_X))=0$ if $Z\subset X$ is a closed subset of codimension greater or equal than three.
\end{Remark}

\subsection{Locally trivial deformations of $\overline{M}_{g,n}$}
Let $\mathcal{X}$ be a smooth Deligne-Mumford stack, and let $\pi:\mathcal{X}\rightarrow X$ be the structure morphism on its coarse module space. The stack $\mathcal{X}$ is called {\em canonical} if the locus where $\pi$ is not an isomorphism has dimension less or equal than $\dim(\mathcal{X})-2$ \cite[Definition 4.4]{FMN}. Every variety with quotient singularities is the coarse moduli space of a canonical smooth Deligne-Mumford stack, unique up to unique isomorphism \cite[Remark 4.9]{FMN}.

\begin{Lemma}\label{cohcs}
Let $X$ be a variety with finite quotient singularities and let $\mathcal{X}$ be its canonical stack. Then $H^i(X,T_X)\cong H^i(\mathcal{X},T_{\mathcal{X}})$ for any $i\geq 0$. In particular, if $\mathcal{X}$ is rigid then $X$ does not have locally trivial deformations.
\end{Lemma}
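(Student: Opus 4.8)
The plan is to compute $H^i(\mathcal{X},T_{\mathcal{X}})$ by pushing forward along the coarse moduli morphism $\pi:\mathcal{X}\to X$ and then to identify $\pi_* T_{\mathcal{X}}$ with the tangent sheaf $T_X=\mathcal{H}om(\Omega_X,\mathcal{O}_X)$ appearing in the deformation theory of the previous subsection. The whole argument rests on two features of $\pi$: it is cohomologically trivial (so a Leray argument reduces everything to $X$), and it is an isomorphism away from a closed subset of codimension at least two (which lets a reflexivity argument identify the pushforward with $T_X$).

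First I would record the cohomological triviality of $\pi$. Since $X$ has finite quotient singularities, $\mathcal{X}$ is a smooth tame Deligne-Mumford stack (in characteristic zero tameness is automatic), so étale locally $\mathcal{X}=[V/G]$ with $V$ smooth, $|G|$ invertible, and $X=V/G$; on such a chart $\pi_*$ is the formation of $G$-invariants, which is exact because the Reynolds operator $\frac{1}{|G|}\sum_{g}g$ splits the invariants off as an $\mathcal{O}_X$-linear direct summand. Hence $R^j\pi_*\mathcal{F}=0$ for all $j>0$ and all quasicoherent $\mathcal{F}$, and the Leray spectral sequence degenerates to give $H^i(\mathcal{X},T_{\mathcal{X}})\cong H^i(X,\pi_* T_{\mathcal{X}})$ for every $i$.

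The core step is then the identification $\pi_* T_{\mathcal{X}}\cong T_X$. Let $j:U\hookrightarrow X$ be the inclusion of the smooth locus; by the definition of a canonical stack the non-isomorphism locus of $\pi$ has dimension $\le\dim(\mathcal{X})-2$, so $\codim(X\setminus U)\ge 2$ and over $U$ there is a canonical identification $(\pi_*T_{\mathcal{X}})|_U=T_U=T_X|_U$. Both sheaves are reflexive on the normal variety $X$: the sheaf $T_X=\Omega_X^\vee$ as the dual of a coherent sheaf, and $\pi_* T_{\mathcal{X}}$ because in the local model it is $(p_* T_V)^G$, a characteristic-zero direct summand of $p_* T_V$, which is Cohen--Macaulay (hence $S_2$ and, being torsion-free, reflexive) since $T_V$ is locally free on the smooth $V$ and $p:V\to X$ is finite. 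A reflexive sheaf on a normal variety equals $j_*$ of its restriction to any open whose complement has codimension $\ge 2$, so $\pi_* T_{\mathcal{X}}=j_* T_U=T_X$, the canonical isomorphism over $U$ extending uniquely. Combining with the Leray step gives $H^i(X,T_X)\cong H^i(\mathcal{X},T_{\mathcal{X}})$ for all $i$. I expect this identification to be the main obstacle, and specifically the verification that $\pi_*T_{\mathcal{X}}$ is reflexive (the Cohen--Macaulay/direct-summand computation in the local quotient chart), which is what allows the comparison over the codimension-$\ge 2$-complement open set to propagate to all of $X$.

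For the final assertion, rigidity of $\mathcal{X}$ means $H^1(\mathcal{X},T_{\mathcal{X}})=0$, whence $H^1(X,T_X)=0$. By the local-to-global Ext exact sequence recalled in the Remark preceding this lemma, $H^1(X,T_X)$ is precisely the tangent space to locally trivial infinitesimal deformations of $X$, so its vanishing says that $X$ admits no nontrivial locally trivial deformations.
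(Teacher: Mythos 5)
Your proof is correct and follows essentially the same route as the paper: both arguments reduce to showing $\pi_*T_{\mathcal{X}}\cong T_X$ by comparing the two sheaves over the locus where $\pi$ is an isomorphism and extending across the codimension $\ge 2$ complement, then conclude via the Leray spectral sequence using $R^{j}\pi_*=0$ for $j>0$. The only cosmetic difference is that the paper phrases the extension step through vanishing of low-degree cohomology with supports ($\mathcal{H}^i_Z=0$ for $i<2$), whereas you phrase it through reflexivity of $T_X$ and $\pi_*T_{\mathcal{X}}$ --- equivalent depth conditions --- and your justification of the vanishing of higher direct images via tameness and the Reynolds operator is, if anything, more careful than the paper's appeal to finiteness of $\pi$.
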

\begin{proof}
Let $Z = \Sing(X)$ and $U = X\setminus Z$. Since $\mathcal{X}$ is the canonical stack of $X$ we may embed $U$ in $\mathcal{X}$ as well. We get the following commutative diagram 
  \[
  \begin{tikzpicture}[xscale=1.5,yscale=-1.2]
    \node (A0_0) at (0, 0) {$U$};
    \node (A0_1) at (1, 0) {$\mathcal{X}$};
    \node (A1_1) at (1, 1) {$X$};
    \path (A0_0) edge [->]node [auto] {$\scriptstyle{i}$} (A0_1);
    \path (A0_1) edge [->]node [auto] {$\scriptstyle{\pi}$} (A1_1);
    \path (A0_0) edge [->,swap]node [auto] {$\scriptstyle{j}$} (A1_1);
  \end{tikzpicture}
  \]
and $\pi_{*}i_{*}T_{U} = j_{*}T_U$. Furthermore, we have the two following exact sequences:
$$
0\mapsto \mathcal{H}_{Z}^{0}(X,T_X)\rightarrow T_X\rightarrow j_{*}T_{X|U}\rightarrow \mathcal{H}^1_{Z}(X,T_X)$$
$$
0\mapsto \mathcal{H}_{W}^{0}(\mathcal{X},T_{\mathcal{X}})\rightarrow T_{\mathcal{X}}\rightarrow i_{*}T_{\mathcal{X}|U}\rightarrow \mathcal{H}^1_{W}(\mathcal{X},T_{\mathcal{X}})
$$
where $W = \pi^{-1}(Z)$ with the reduced substack structure, and $\mathcal{H}_{Z}^{i}$ is the sheaf of cohomology with supports, see \cite{Gro}. Furthermore, since $\codim_{X}(Z) = \codim_{\mathcal{X}}(W) \geq 2$ we get $\mathcal{H}_{Z}^{i}(X,T_X) = \mathcal{H}_{W}^{i}(\mathcal{X},T_{\mathcal{X}})=0$ for $i< 2$. This yields $\pi_{*}T_{\mathcal{X}}\cong \pi_{*}i_{*}T_U\cong j_{*}T_U\cong T_{X}$. Finally, since $R^{i}\pi_{*}T_{\mathcal{X}} = 0$ for $i\geq 1$, because $\pi$ is finite, we conclude by Leray's spectral sequence. 
\end{proof}

\begin{Lemma}\label{notquitecan} 
Let $c:\overline{\mathcal{M}}_{g,n}\rightarrow\overline{\mathcal{M}}_{g,n}^{can}$ be the structure map on the canonical stack. If $g+n\geq 4$, it is an isomorphism outside the boundary divisor $\Delta_1$ of elliptic tails, that is the image of $\overline{\mathcal{M}}_{1,1}\times \overline{\mathcal{M}}_{g-1,n+1}$ via the gluing map.
\end{Lemma}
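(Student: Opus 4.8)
The plan is to reduce the assertion to a local statement about pseudo-reflections in automorphism groups, and then to classify the divisors of $\overline{\mathcal{M}}_{g,n}$ whose generic point carries a nontrivial automorphism. Étale locally at a point $[C]$ one has $\overline{\mathcal{M}}_{g,n}\cong[D/G]$, where $G=\Aut(C)$ and $D$ is the versal deformation of $C$, a smooth germ with $T_0D\cong\Ext^1(\Omega_C(\sum_i p_i),\mathcal O_C)$. First I would let $H\trianglelefteq G$ be the subgroup generated by the elements acting on $T_0D$ as pseudo-reflections, that is fixing a hyperplane. By the Chevalley--Shephard--Todd theorem $D/H$ is smooth, the canonical stack is locally $[(D/H)/(G/H)]$, and $c$ is the induced map $[D/G]\to[(D/H)/(G/H)]$; hence $c$ is an isomorphism at $[C]$ if and only if $H=1$, that is if and only if $G$ contains no pseudo-reflection. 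Since the fixed locus of a pseudo-reflection is a hyperplane, which descends to a divisor in moduli, the locus where $c$ is not an isomorphism is a union of divisors, each having nontrivial generic automorphism group. It therefore suffices to prove that, for $g+n\ge 4$, the only divisor of $\overline{\mathcal{M}}_{g,n}$ with nontrivial generic stabilizer is $\Delta_1$, where one checks directly that the elliptic involution of the tail acts trivially on equisingular deformations and by $-1$ on the single node-smoothing line, hence is a pseudo-reflection.

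Next I would treat the boundary divisors. Writing $T_0D$ as an extension of the node-smoothing lines $\bigoplus_q (T'_qC\otimes T''_qC)$, one line per node $q$, by the equisingular deformations $H^1(C,T_C(-\sum_i p_i))$, an automorphism acts on each smoothing line through its action on the two branch tangents and on the equisingular part through its action on the normalized components. The generic point of $\Delta_{irr}$, or of a boundary divisor $\Delta_{i,S}$, is a one-nodal curve whose components are general pointed curves; such a component admits a universal (generic) automorphism only when it is a one-pointed genus-one curve, in which case the automorphism is the elliptic involution. This forces the genus-one component to carry no marked point besides the node, that is $[C]\in\Delta_1$; every other boundary divisor has trivial generic automorphism group.

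The main work, and the step I expect to be the main obstacle, is to rule out interior divisors of smooth curves carrying a generic automorphism once $g+n\ge 4$. Here I would take $C$ smooth with $\sigma\in\Aut(C)$ of order $m>1$; since the labelled marked points are fixed by $\sigma$, each $p_i$ is a totally ramified point of the quotient map $C\to C/\langle\sigma\rangle$, and distinct marked points lie over distinct branch points. Writing $h$ for the genus of $C/\langle\sigma\rangle$ and $r$ for the number of its branch points, the corresponding locus in $\overline{\mathcal{M}}_{g,n}$ has dimension $3h-3+r$ with $r\ge n$, so its codimension is $3(g-h)+n-r$. A Riemann--Hurwitz estimate then bounds this codimension below by $g+h+n-2\ge g+n-2\ge 2$, the extremal case being that of an involution with rational quotient. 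Hence no such locus is a divisor, and the only configurations producing codimension one are $(g,n)=(3,0)$, the hyperelliptic locus, and $(g,n)=(2,1)$, the Weierstrass divisor, both of which have $g+n=3$ and are excluded by hypothesis. Combining this with the boundary analysis identifies the non-isomorphism locus of $c$ with $\Delta_1$, and in particular shows that $c$ is an isomorphism over $\overline{\mathcal{M}}_{g,n}\setminus\Delta_1$.
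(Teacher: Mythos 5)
Your proof is correct in the range $g+n\geq 4$ and follows the same overall strategy as the paper's: everything is reduced to showing that $\Delta_1$ is the only divisor of $\overline{\mathcal{M}}_{g,n}$ whose generic point carries a nontrivial automorphism. But your write-up is genuinely more complete than the paper's own proof in two respects, and differs in a third. (i) The paper's proof records only two facts: the automorphism locus meets $M_{g,n}$ in codimension at least two (citing \cite[Corollary 1]{Co}), and the generic points of $\Delta_{irr}$ and of the reducible-curve divisors are automorphism-free unless there is an unmarked elliptic tail. It never says why these facts imply the lemma, i.e.\ why the non-isomorphism locus of $c$ cannot have components of codimension $\geq 2$ sitting inside the automorphism locus but away from $\Delta_1$. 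Your Chevalley--Shephard--Todd reduction supplies exactly this missing step: the non-isomorphism locus is swept out by images of fixed hyperplanes of pseudo-reflections, hence is pure of codimension one, so only generic points of divisors matter. (ii) Where the paper cites Cornalba for the interior bound, you reprove it via the Riemann--Hurwitz estimate $\codim\geq g+h+n-2\geq g+n-2\geq 2$; the estimate is right and buys self-containedness (your parenthetical list of extremal cases omits the bielliptic divisor in $M_2$, but that is harmless since $g+n=2$ there). (iii) You also check that the elliptic involution acts as a pseudo-reflection along $\Delta_1$ — the computation is essentially the paper's Lemma \ref{facile} — which is what one needs to know that the non-isomorphism locus is \emph{exactly} $\Delta_1$, as the paper asserts when it applies this lemma in Theorem \ref{loctriv}.

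One genuine imprecision you must fix: your local criterion ``$c$ is an isomorphism at $[C]$ if and only if $H=1$,'' and the identification of the canonical stack with $[(D/H)/(G/H)]$, presuppose that $G=\Aut(C)$ acts \emph{faithfully} on the versal deformation space $D$. If the action has a kernel $K\neq 1$, then $c$ must also kill $K$, so $c$ fails to be an isomorphism even though $H=1$; this is what happens at every point of $\overline{\mathcal{M}}_{1,1}$ and of $\overline{\mathcal{M}}_2$, where the elliptic (resp.\ hyperelliptic) involution acts trivially on $D$ — for $\overline{\mathcal{M}}_{1,1}$ this is precisely the computation in Lemma \ref{facile}. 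Under the hypothesis $g+n\geq 4$ faithfulness is automatic: a kernel element acts fiberwise on the versal family, so every curve in a neighborhood of $[C]$ would have a nontrivial automorphism, contradicting the density of automorphism-free curves. This is a one-sentence repair, but without it the ``if and only if'' on which your whole reduction rests is false as stated.
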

\begin{proof}
By \cite[Corollary 1]{Co} if $g+n\geq 4$ any component of the locus parametrizing curves in $M_{g,n}$ with a non-trivial automorphism has codimension at least two.\\
Let us consider a general point $[C,x_1,...,x_n]$ of the boundary divisor $\Delta_{irr}$ parametrizing irreducible nodal curves. The normalization $\nu:\widetilde{C}\rightarrow C$ of $C$ is a smooth curve of genus $g-1$. Under our numerical hypothesis $[\widetilde{C},\nu^{-1}(x_1),...,\nu^{-1}(x_n)]$ is automorphism-free. Therefore, $[C,x_1,...,x_n]$ does not have non-trivial automorphisms as well.\\
The remaining boundary divisors parametrize reducible curves $[C_1\cup C_2,x_1,...,x_n]$ where $C_1$, $C_2$ are smooth curves of genus $g_1$, $g_2$ with $n_1$, $n_2$ marked points respectively intersecting just in one node $p = C_1\cap C_2$, and such that $g_1+g_2 = g$ and $n_1+n_2 = n$. Recalling that a general stable curve of the form $[C_i,x_{i_1},...,x_{i_{n_1}},p]$ has a non-trivial automorphism if and only if $g_i = 1$ and $n_i = 0$, we conclude that a general stable curve of the form $[C_1\cup C_2,x_1,...,x_n]$ admits a non-trivial automorphism if and only either $C_1$ or $C_2$ is an elliptic tail without marked points.
\end{proof}

The following statement should be compared to \cite[Theorem 2.3]{Hac}.
\begin{Theorem}\label{loctriv}
If $g+n\geq 4$, then the coarse moduli space $\overline{M}_{g,n}$ does not have non-trivial locally trivial deformations.
\end{Theorem}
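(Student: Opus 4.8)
Recall from the Remark above that if $H^{1}(X,T_{X})=0$ then every locally trivial deformation of $X$ is trivial; so it is enough to prove $H^{1}(\overline{M}_{g,n},T_{\overline{M}_{g,n}})=0$. The case $g=0$ is already covered by Theorem \ref{rig}, where $\overline{M}_{0,n}$ is smooth and rigid, so the plan is to assume $g\ge 1$. Writing $\overline{\mathcal M}_{g,n}^{can}$ for the canonical stack of $\overline{M}_{g,n}$, Lemma \ref{cohcs} gives $H^{1}(\overline{M}_{g,n},T_{\overline{M}_{g,n}})\cong H^{1}(\overline{\mathcal M}_{g,n}^{can},T_{\overline{\mathcal M}_{g,n}^{can}})$, so the whole problem becomes the vanishing of $H^{1}$ of the tangent sheaf of the canonical stack, which I will deduce from Hacking's rigidity of the full stack $\overline{\mathcal M}_{g,n}$.

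The comparison between the two stacks goes through the contraction $c:\overline{\mathcal M}_{g,n}\to\overline{\mathcal M}_{g,n}^{can}$, which by Lemma \ref{notquitecan} is an isomorphism away from the elliptic-tails divisor $\Delta_{1}$. Along the generic point of $\Delta_{1}$ the only extra automorphism is the elliptic involution of the tail, acting by $-1$ on the smoothing parameter of the node and trivially in the tangential directions, so \'etale-locally $c$ is modelled on the coarse map $[\mathbb A^{1}_{t}/\mu_{2}]\to\mathbb A^{1}_{s}$ with $s=t^{2}$. I would push the differential of $c$ forward and assemble the exact sequence
$$0\to c_{*}T_{\overline{\mathcal M}_{g,n}}\to T_{\overline{\mathcal M}_{g,n}^{can}}\to Q\to 0,$$
where $Q$ is a coherent sheaf supported on $\Delta_{1}$; in the local model the invariant vector fields give $c_{*}T=s\,k[s]\,\partial_{s}\subset k[s]\,\partial_{s}$, with one-dimensional cokernel. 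Since $c$ is finite and, in characteristic zero, the formation of $\mu_{2}$-invariants is exact, $R^{i}c_{*}=0$ for $i>0$, and Leray gives $H^{1}(\overline{\mathcal M}_{g,n}^{can},c_{*}T_{\overline{\mathcal M}_{g,n}})\cong H^{1}(\overline{\mathcal M}_{g,n},T_{\overline{\mathcal M}_{g,n}})=0$ by Hacking's rigidity \cite[Theorem 2.1]{Hac}.

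It then remains to prove $H^{1}(\Delta_{1},Q)=0$. The generic computation identifies $Q$ with the normal bundle $N_{\Delta_{1}/\overline{\mathcal M}_{g,n}^{can}}$, which the relation $s=t^{2}$ exhibits as the square $\mathcal N^{\otimes 2}$ of the normal bundle $\mathcal N$ of $\Delta_{1}$ inside the full stack. By \cite[Corollary 4.4]{Hac} the dual $\mathcal N^{\vee}$, hence $(\mathcal N^{\otimes 2})^{\vee}$, is nef and big on the relevant component $\overline{\mathcal M}_{1,1}\times\overline{\mathcal M}_{g-1,n+1}$, so the Kodaira-type vanishing behind (\ref{van2}) yields $H^{i}(\Delta_{1},Q)=0$ for $i<\dim\Delta_{1}=3g-4+n$; the hypothesis $g+n\ge 4$, $g\ge 1$ guarantees $3g-4+n>1$, so in particular $H^{1}(\Delta_{1},Q)=0$. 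Feeding $H^{1}(c_{*}T_{\overline{\mathcal M}_{g,n}})=0$ and $H^{1}(Q)=0$ into the long exact sequence of the displayed short exact sequence squeezes $H^{1}(\overline{\mathcal M}_{g,n}^{can},T_{\overline{\mathcal M}_{g,n}^{can}})$ between two zeros, and the chain of reductions is complete.

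The step I expect to be the \emph{main obstacle} is the precise identification of the quotient $Q$ as a sheaf on all of $\Delta_{1}$, not merely at its generic point. The clean model $[\mathbb A^{1}/\mu_{2}]$ breaks down along the strata of $\Delta_{1}$ of codimension $\ge 2$ where the tail acquires extra automorphisms (the curves $E_{4},E_{6}$) or meets the other boundary divisors, and one must verify that these deeper strata neither destroy the nef-and-big input nor contribute to $H^{1}(\Delta_{1},Q)$. Reflexivity of $T_{\overline{\mathcal M}_{g,n}^{can}}$ and of $c_{*}T_{\overline{\mathcal M}_{g,n}}$, together with the codimension estimate of Remark \ref{highcodim}, should bring this under control, but it is the delicate point; checking that doubling a nef-and-big $\mathbb Q$-line bundle keeps it inside the hypotheses of Hacking's vanishing theorem is by contrast immediate.
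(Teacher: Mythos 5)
Your proposal is correct and follows essentially the same route as the paper: both reduce to the canonical stack via Lemma \ref{cohcs}, compare $T_{\overline{\mathcal M}_{g,n}^{can}}$ with $c_*T_{\overline{\mathcal M}_{g,n}}$ through a short exact sequence whose cokernel is (the push-forward of) the square of the normal line bundle $\psi_1^\vee\boxtimes\psi_{n+1}^\vee$ along the elliptic-tails divisor, and kill its cohomology by Hacking's Kodaira-type vanishing for nef and big psi-classes before invoking rigidity of the stack $\overline{\mathcal M}_{g,n}$. The paper writes your sheaf $Q$ as $c_*f_*L^{\otimes 2}$ with $L\cong\psi_1^\vee\boxtimes\psi_{n+1}^\vee$, obtained from the same order-two ramification of $c$ along $\Delta_1$ that your local model $[\mathbb{A}^1_t/\mu_2]\to\mathbb{A}^1_s$ encodes, so even the delicate global identification you flag is handled (equally briefly) in the same way there.
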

\begin{proof}
Let $c:\overline{\mathcal{M}}_{g,n}\rightarrow\overline{\mathcal{M}}_{g,n}^{can}$ the structure map on the canonical stack. Let $f:Y = \overline{M}_{1,1}\times\overline{M}_{g-1,n+1}\rightarrow \overline{M}_{g,n}$ be the natural attaching morphism. Since $g+n\geq 4$, by Lemma \ref{notquitecan} we have that $f(Y)$ is the locus in $\overline{M}_{g,n}$ where $c:\overline{\mathcal{M}}_{g,n}\rightarrow\overline{\mathcal{M}}_{g,n}^{can}$ is not an isomorphism. Now, on $Y$ we have an exact sequence
$$0\mapsto T_Y\rightarrow f^{*}T_{\overline{\mathcal{M}}_{g,n}}\rightarrow L\mapsto 0$$
where $L\cong \psi_{1}^{\vee}\boxtimes\psi_{n+1}^{\vee}$. Since the general curve parametrized by $Y$ has two automorphisms the line bundle $L$ fits in the following exact sequence as well
$$0\mapsto c_{*}T_{\overline{\mathcal{M}}_{g,n}}\rightarrow T_{\overline{\mathcal{M}}_{g,n}^{can}}\rightarrow c_{*}f_{*}L^{\otimes 2}\mapsto 0$$
Since $L^{\vee}\cong \psi_{1}\boxtimes\psi_{n+1}$ is big and nef \cite[Theorem A.1]{Hac} yields 
$$H^{0}(\overline{\mathcal{M}}_{g,n}^{can},c_{*}f_{*}L^{\otimes 2}) = H^{1}(\overline{\mathcal{M}}_{g,n}^{can},c_{*}f_{*}L^{\otimes 2})=0$$
which in turns implies $H^{1}(\overline{\mathcal{M}}_{g,n}^{can},T_{\overline{\mathcal{M}}_{g,n}^{can}})\cong H^{1}(\overline{\mathcal{M}}_{g,n}^{can}, c_{*}T_{\overline{\mathcal{M}}_{g,n}})\cong H^{1}(\overline{\mathcal{M}}_{g,n},T_{\overline{\mathcal{M}}_{g,n}})$. To conclude it is enough to recall that the stack $\overline{\mathcal{M}}_{g,n}$ is rigid and to apply Lemma \ref{cohcs} with $\mathcal{X} = \overline{\mathcal{M}}_{g,n}^{can}$ and $X = \overline{M}_{g,n}$.
\end{proof} 

\subsection{Singularities of $\overline{M}_{g,n}$}
We denote again by $\Delta_{1}$ the image in $\overline{M}_{g,n}$ of $\overline{M}_{1,1}\times \overline{M}_{g-1,n+1}$, the divisor parametrizing curves with elliptic tails. We introduce the following notation for codimension two, that is of maximal dimension, irreducible components of the singular locus of $\overline{M}_{g,n}$:
\begin{itemize}
\item[-] $Z_i$ the image of $[E_i]\times \overline{M}_{g-1,n+1}\subset \Delta_{1}$ for $i = 4,6$, the codimension two loci where the elliptic tail has four and six automorphisms respectively;
\item[-] $Y$ the locus parametrizing reducible curves $E\cup C$ where $E$ is an elliptic curve with a marked point which is fixed by the elliptic involution, and $C$ is a curve of genus $g-1$ with $n-1$ marked points;
\item[-] $W$ the locus parametrizing reducible curves $C_1\cup C_2$ where $C_1$ and $C_2$ are of genus two and $g-2$ respectively, the marked points are on $C_2$, and $C_1\cap C_2$ is a fixed point of the hyperelliptic involution on $C_1$.
\end{itemize}

\begin{Definition}\label{transing}
A variety $X$ which, \'etale locally at a general point of its reduced singular locus $Z$, has type $\frac{1}{n}(a_1,a_2,0,\ldots,0)$ will be said to have a transversal $\frac{1}{n}(a_1,a_2)$ singularity along $Z$.
\end{Definition}

\begin{Proposition}\label{sing}
If $g+n > 4$, then the only codimension two irreducible components of $\Sing(\overline{M}_{g,n})$ are $Z_4, Z_6, Y$ and $W$. Each component contains dense open subsets, denoted by a superscript zero, with complement of codimension at least two such that $\overline{M}_{g,n}$ has transversal $A_1$ singularities along $Z^0_4, Y^0$ and $W^0$, and transversal $\frac{1}{3}(1,1)$ singularities along $Z^0_6$. 
\end{Proposition}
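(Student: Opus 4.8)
The plan is to exploit the étale-local description of the coarse space. At a point $[C]=[C,x_1,\dots,x_n]$ the scheme $\overline{M}_{g,n}$ is étale locally the quotient $\operatorname{Def}(C)/\Aut(C)$, where $\operatorname{Def}(C)$ is the smooth, $(3g-3+n)$-dimensional versal deformation space; linearizing the action, write $T$ for the tangent representation of $\Aut(C)$. By the Chevalley--Shephard--Todd theorem the quotient is smooth at the image of the origin if and only if $\Aut(C)$ acts as a group generated by pseudoreflections, so $[C]\in\Sing(\overline{M}_{g,n})$ precisely when $\Aut(C)$ is not so generated. Moreover, for $\phi\in\Aut(C)$ the codimension of its fixed locus in $\overline{M}_{g,n}$ equals the dimension of the moving part $T/T^{\phi}$, and a component of the singular locus has codimension two exactly when a generic stabilizer along it has two-dimensional moving part and fails to be generated by reflections.

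To compute moving parts I would first decompose $T$ along the nodes of $C$: there is an $\Aut(C)$-equivariant exact sequence presenting $\operatorname{Def}(C)$ as an extension of the node-smoothing spaces $\bigoplus_{p} T_p C'\otimes T_p C''$ by the deformations of the pointed normalization. This reduces everything to two local facts: on the smoothing parameter of a node an involution fixing one branch acts by $-1$; and on the deformation space of a marked component an elliptic or hyperelliptic involution acts trivially on the coarse moduli directions (by Lemma \ref{facile} for elliptic tails, and the analogous statement that the hyperelliptic involution acts trivially on $H^1(C_1,T_{C_1})$ of a genus-two curve $C_1$) but by $-1$ on the direction displacing a fixed marked point or node.

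Next I would show that every codimension-two component of $\Sing(\overline{M}_{g,n})$ lies in the boundary. For a smooth curve an involution $\tau$ with quotient of genus $h$ has moving part of dimension $g+h-2$ on $H^1(C,T_C)$, so a two-dimensional moving part forces $g+h=4$ on an unmarked curve; as soon as a marked point is present it is generically not fixed by $\tau$, so the involution survives only in codimension $\geq 3$. Combined with \cite[Corollary 1]{Co}, this shows that the interior $M_{g,n}$ contributes no codimension-two singular component when $g+n>4$. On the boundary I would run through the one- and two-node strata: a stratum whose only automorphisms come from elliptic tails yields reflections (acting by $-1$ only on the corresponding node-smoothing), hence smooth points, and the only ways to produce a two-dimensional non-reflection moving part in codimension two are to impose one extra condition on a one-node stratum — specializing an elliptic tail to $E_4$ or $E_6$ (giving $Z_4,Z_6$), forcing the marked point of an elliptic tail to be a $2$-torsion point fixed by the involution (giving $Y$), or attaching a genus-two tail at a Weierstrass point so that the hyperelliptic involution survives (giving $W$). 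On dense open subsets $Z^0_i,Y^0,W^0$ the generic stabilizer is exactly the expected cyclic group, the remaining locus (where $C$ acquires further automorphisms or two conditions collide) having codimension $\geq 2$ inside the component.

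Finally I would read off the transversal type from the weights of the stabilizer on the two-dimensional normal space. For $Y$ and $W$ the stabilizer is generated by an involution acting by $-1$ on both the node-smoothing direction and the displacement direction, giving $\tfrac12(1,1)=A_1$. For $Z_4$ and $Z_6$, Lemma \ref{facile} gives that $\lambda$ acts on the pair (deformation of $E_i$, node-smoothing) with weights $(2,1)$; quotienting first by the reflection subgroup generated by the order-two element, which acts by $(1,-1)$, leaves a residual $\mu_2$ resp.\ $\mu_3$ acting diagonally, yielding $A_1$ for $Z_4$ and $\tfrac13(1,1)$ for $Z_6$. I expect the main obstacle to be the completeness of the enumeration in the third step: ruling out any further codimension-two singular component requires simultaneously controlling the interior automorphism loci through the moving-part/codimension estimate and analysing the generic automorphism group of every boundary stratum.
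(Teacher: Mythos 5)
Your proposal follows essentially the same architecture as the paper's proof: singularity detected via the Chevalley--Shephard--Todd criterion on $\operatorname{Def}(C)/\Aut(C)$, exclusion of the interior by a codimension bound, enumeration of the boundary strata whose generic automorphism group is not generated by quasi-reflections (specializing an elliptic tail to $E_4$ or $E_6$, a $2$-torsion marked point on an elliptic tail, a genus-two tail attached at a Weierstrass point), and identification of the transversal type from the stabilizer weights on the two normal directions. One place where you diverge, to your advantage: for $Z_4^0$ and $Z_6^0$ you compute the type directly --- $\mu_4$ (resp.\ $\mu_6$) acts with weights $(2,1)$ by Lemma \ref{facile}, the order-two element $(1,-1)$ is a quasi-reflection, and the residual $\mu_2$ (resp.\ $\mu_3$) acts diagonally, giving $A_1$ and $\tfrac{1}{3}(1,1)$. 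This is correct and more self-contained than the paper, which instead transports the answer from the explicit description of the singularities of $\overline{M}_{1,2}$ (Proposition \ref{M12}), using that the transversal structure along $Z_i^0$ does not depend on $(g,n)$.

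The genuine gap is in your exclusion of the interior. Your moving-part formula $g+h-2$ (augmented by $n$ for fixed marked points) handles involutions only, and the appeal to \cite[Corollary 1]{Co} cannot close the argument: that result gives codimension at least two for the locus of curves \emph{with automorphisms}, whereas what is needed is codimension at least three for $\Sing(M_{g,n})$. A codimension-two singular component in the interior could a priori come from a cyclic stabilizer of higher order --- for instance, order-three automorphisms of genus-two curves sweep out a codimension-two fixed locus in $M_2$ whose generic stabilizer is not generated by quasi-reflections (this sits outside the range $g+n>4$, but nothing in your argument rules out analogous loci in general). Note also that the property of being generated by quasi-reflections is not detected by prime-order elements alone: in $\mu_4$ acting with weights $(2,1)$ the unique involution \emph{is} a quasi-reflection, yet the full group is not generated by quasi-reflections; so the enumeration must run over all cyclic stabilizers, not just involutions. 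The paper bypasses this entirely by citing \cite[Proposition 1]{Pa}, which is exactly the statement that for $g+n>4$ every component of $\Sing(M_{g,n})$ has codimension at least three; you should either cite it or carry out the moving-part estimate for arbitrary orders. You correctly flagged this completeness issue as the main obstacle, and the rest of your argument --- boundary enumeration, density of $Z^0_i, Y^0, W^0$ with complement of codimension at least two, and the weight computations --- matches the paper's proof.
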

\begin{proof}
By \cite[Proposition 1]{Pa} under our numerical hypothesis any component of $\Sing(M_{g,n})$ has codimension at least three in $M_{g,n}$. By Lemma \ref{notquitecan} the divisor $\Delta_1$ is the only boundary divisor whose general point corresponds to a curve with non-trivial automorphisms. There are two ways of producing loci of codimension two in $\overline{M}_{g,n}$ parametrizing curves with extra automorphisms in $\Delta_{1}$. Namely,
\begin{itemize}
\item[(\textit{a})] we may add another elliptic tail,
\item[(\textit{b})] we consider the loci $Z_i = [E_i]\times \overline{M}_{g-1,n+1}$ for $i = 4,6$.
\end{itemize} 
Now, note that the automorphism group of the general curve parametrized by (\textit{a}) is generated by pseudo-reflections. Therefore, this locus is not singular for $\overline{M}_{g,n}$.\\ 
Any other component must be properly contained in a different boundary divisor, hence must be induced by a codimension one stratum with non-trivial automorphisms in some ${M}_{g_1,n_1+1}$ with $(g_1,n_1)\ne (1,0)$. There are exactly two such strata, for $(g_1,n_1+1)=(1,2)$ and $(2,1)$, yielding the components $Y$ and $W$.\\
The automorphism groups of a general curve parametrized by $Z_4,Z_6,Y$ and $W$ are not generated by pseudo-reflections. We conclude that the codimension two components of $\Sing(\overline{M}_{g,n})$ are exactly $Z_4,Z_6,Y$ and $W$.\\
Furthermore, there is just one way to produce a divisor in $Z_i$ parametrizing curves with extra automorphisms, namely adding another elliptic tail;  this however generates a pseudo-reflection. We conclude the locus in $Z_i$ where the singularities are different from the generic point has codimension at least two. The same argument applies to $Y$ and $W$.\\
The finite quotient singularities in $Y^0$ and $W^0$ are both produced by automorphisms of order two. Therefore, these singularities are forced to be of type $A_1$.\\
The type of the singularities along the $Z_i^{0}$'s is the same for any $g\geq 1$, $n\geq 2$. Therefore, by Lemma \ref{facile} and the second part of Proposition \ref{M12} we have that $Z_4^{0}$ is a singularity of type $\frac{1}{2}(1,1,0,...,0)$, or transversal $A_1$, and $Z_6^{0}$ is a singularity of type $\frac{1}{3}(1,1,0,...,0)$ or transversal $\frac{1}{3}(1,1)$.
\end{proof}

\subsection{Deformation of varieties with transversal $A_1$ singularities}

In this section we assume that the characteristic of the ground field is different from two. Let $\mathcal{X}$ be a smooth Deligne-Mumford stack, and assume that its inertia stack is the disjoint union of $\mathcal{X}$ and of a smooth stack $\mathcal{Z}$, mapping isomorphically to its image in $\mathcal{X}$, which we also denote by $\mathcal{Z}$. Let $\eps:\mathcal{X}\to X$ be the morphism to the coarse moduli space, which we assume is a scheme. Let $\bar \eps:\mathcal{Z}\to Z$ be the morphism to the coarse moduli space, and assume that $\bar \eps$ is a $\mu_2$-gerbe.\\ 
An example can be constructed as follows: let $Z$ be a non-singular variety, and $F$ a rank two vector bundle on $Z$. Let $\mu_2$ act on $F$ by scalar multiplication. Then $\mathcal{X}=[F/\mu_2]$ and $\mathcal{Z}=Z\times B\mu_2$ satisfy the assumptions.\\
In this case, $X$ is a variety with transversal $A_1$, or transversal $\frac{1}{2}(1,1)$ singularities along $Z$, that is \'etale locally, the inclusion $i:Z\to X$ looks like $Z=Z\times\{0\}\to Z\times \Spec K[x,y,z]/(xy-z^2)$. Conversely, if $X$ has transversal $A_1$ singularities along $Z$, then such an $\eps:\mathcal{X}\to X$ exists, it is the canonical stack, and is unique up to unique isomorphism, while $\mathcal{Z}=\eps^{-1}(Z)_{red}$.\\
Let $N:=N_{\mathcal{Z}/\mathcal{X}}$ be the normal bundle of $\mathcal{Z}$ in $\mathcal{X}$; there exist a, unique up to isomorphism, rank three vector bundle $V$ on $Z$ and a, unique up to isomorphism, line bundle $L$ on $Z$, such that $\bar \eps^*(E)=\Sym^2N$ and $\bar \eps^*(L)=\det N$.
Let $\pi:V\to Z$ be the projection, and let  $\bar L:=\pi^*L$. Let $C$ be the coarse moduli space of $N$; $C$ has transversal $A_1$ singularities along $Z$, it is easy to see that $C=C_{Z/X}$, the normal cone of $Z$ in $X$, but we will not need this.

\begin{Lemma}\label{bar1}
There is a natural embedding of $C$ in $V$ as the scheme-theoretic zero section of $\bar L^{\otimes 2}$.
\end{Lemma}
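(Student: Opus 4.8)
We want to embed $C$, the coarse moduli space of the normal bundle $N = N_{\mathcal Z/\mathcal X}$, into the total space of $V$ (the rank-three bundle with $\bar\eps^*V = \Sym^2 N$) as the zero locus of a section of $\bar L^{\otimes 2} = \pi^*(L^{\otimes 2})$. The guiding idea is that $C$ is the coarse space of the stack $[N/\mu_2]$, where $\mu_2$ acts on $N$ by scalar multiplication, and that the ring of $\mu_2$-invariant functions on the fibers of $N\to Z$ is generated by the quadratic monomials — precisely the sections of $\Sym^2 N = \bar\eps^* V$. So the natural map sending a point of $N$ to its image under the squaring/symmetric-square map should realize $C$ as a conic bundle inside $V$.

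\textbf{Key steps, in order.} First I would construct the natural morphism $C \to V$ over $Z$. Working locally over an open $U\subseteq Z$ trivializing $N\cong \mathcal O_U^{\oplus 2}$ with fiber coordinates $(s,t)$, the $\mu_2$-invariants are spanned by $s^2, st, t^2$, which are exactly the fiber coordinates $(u,v,w)$ of $V\cong\Sym^2 N$. This gives $C = \Spec(\text{invariants}) \to V$ locally, and the gluing is forced by functoriality of $\Sym^2$, so the map is well defined globally and is a closed immersion (it is finite and the invariant subring is generated in the stated way). Second, I would identify the image: the invariants satisfy the single relation $uw - v^2 = 0$, which is the defining equation of the transversal $A_1$ singularity, matching the local model $Z\times\Spec K[x,y,z]/(xy-z^2)$ recorded just before the statement. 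Third, I would interpret $uw - v^2$ invariantly. The expression $uw - v^2$ is, up to sign and a unit, the discriminant of the symmetric bilinear form associated to a point of $\Sym^2 N$; equivalently it is the image of the tautological section under the natural contraction $\Sym^2 N\otimes\Sym^2 N \to (\det N)^{\otimes 2}$. Pulled back to $V$, this produces a canonical section of $\pi^*(\det N)^{\otimes 2}$, and since $\bar\eps^* L = \det N$ descends to $L$ on $Z$, this section descends to a section of $\bar L^{\otimes 2} = \pi^*(L^{\otimes 2})$ whose zero scheme is exactly $C$.

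\textbf{The main obstacle.} The computation over a trivializing chart is routine; the delicate point is descent, i.e. checking that the locally defined equation $uw - v^2$ glues to a global section of $\bar L^{\otimes 2}$ rather than merely of $\pi^*(\det N)^{\otimes 2}$ up to the $\mu_2$-twist. Concretely, a change of trivialization of $N$ by $g\in GL_2(\mathcal O_U)$ multiplies the discriminant $uw-v^2$ by $(\det g)^2$, so the section is well defined as a section of the line bundle $(\det N)^{\otimes 2}$ on $Z$, and the $\mu_2$-gerbe structure of $\bar\eps:\mathcal Z\to Z$ is exactly what guarantees that $\det N$ has a well-defined descent to the line bundle $L$ with $\bar\eps^*L = \det N$. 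Thus the squared discriminant is insensitive to the order-two ambiguity and descends to $L^{\otimes 2}$; this is where the hypothesis $\operatorname{char} K\neq 2$ and the $\mu_2$-gerbe assumption enter, and verifying this compatibility carefully is the real content of the proof.
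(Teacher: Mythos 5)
Your proposal is correct and matches the paper's argument in essence: both identify $C$ with the locus of rank-one tensors in $V$, cut out by the discriminant $x_{11}x_{22}-x_{12}^2$, and both show this discriminant yields a global section of $\bar L^{\otimes 2}$ because it transforms by $(\det g)^2$, which is exactly compensated by $(\det N)^{\otimes 2}$ and is insensitive to the $\mu_2$-ambiguity. The only difference is bookkeeping: the paper performs the descent in one step by pulling back to the principal $GL(2)$-bundle $P\to\mathcal Z$ and checking that $f\,(n_1\wedge n_2)^{\otimes 2}$ is $GL(2)$-invariant, whereas you check cocycle compatibility on local trivializations --- two presentations of the same invariance argument.
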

\begin{proof} Let us consider the principal $GL(2)$ bundle $P\to\mathcal{Z}$  associated to $N$, and denote by $N_P$ the pull-back of $N$ to $P$, which is canonically trivial as each point of $P$ corresponds to a basis of a fiber of $N$. A basis $n_1$, $n_2$ of $N_{z}$ induces a basis $n_{11}, n_{12}, n_{22}$ of $E_P$ hence a trivialization, with coordinates $x_{11}, x_{12}, x_{22}$, $x_{ij}=n_in_j$.  The rank one tensors are given by the equation $f=x_{11}x_{22}-x_{12}^2$. It is easy to see that the section $f(n_1\wedge n_2)^{\otimes 2}$ of $L_P$ is $GL(2)$-invariant.
\end{proof}

\begin{Theorem}\label{211}
In our assumptions we have $\Exts^1(\Omega_X,\mathcal{O}_X)=i_*\bar L^{\otimes 2}$, where $i:Z\to X$ is the natural embedding.
\end{Theorem}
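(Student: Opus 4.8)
The plan is to compute the sheaf $\Exts^1(\Omega_X, \mathcal O_X)$ locally, exploiting that the question is \'etale-local along $Z$ and that $X$ has transversal $A_1$ singularities. Since $\Exts^1(\Omega_X,\mathcal O_X)$ is supported on $Z=\Sing(X)$, I may work \'etale-locally, where $X$ looks like $Z\times \Spec K[x,y,z]/(xy-z^2)$. The first step is therefore to reduce the computation to the well-known case of the single surface singularity $S=\Spec K[x,y,z]/(xy-z^2)$: as in the proof of Theorem \ref{nonrig}, writing $R=K[x,y,z]/(xy-z^2)$ and resolving $\Omega_R$ by the Jacobian of $f=xy-z^2$, one gets $\Exts^1(\Omega_R,R)\cong R/\im(\psi_J^t)\cong K[x,y,z]/(xy-z^2,\,y,\,x,\,2z)\cong K$ (here is where $\ch K\neq 2$ is used). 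Thus the sheaf $\Exts^1(\Omega_X,\mathcal O_X)$ is a line bundle supported on $Z$, i.e. of the form $i_*\mathcal L$ for some line bundle $\mathcal L$ on $Z$, and the content of the theorem is the identification $\mathcal L\cong \bar L^{\otimes 2}$.

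Next I would pin down $\mathcal L$ canonically. The generator of $\Ext^1$ over the local ring is the class of the first-order deformation $xy-z^2=t$ of the $A_1$ singularity; globally along $Z$ the choice of such a deformation parameter is only well-defined up to the action of the structure group, so $\mathcal L$ is the line bundle recording how this generator transforms under change of trivialization of the normal data. The key is to carry out this transformation law on the $GL(2)$-bundle $P\to\mathcal Z$ introduced in Lemma \ref{bar1}. A change of basis $g\in GL(2)$ of a fiber $N_z$ acts on the rank-one tensors, and on the smoothing parameter of the equation $f=x_{11}x_{22}-x_{12}^2$; the point of Lemma \ref{bar1} is that the invariant section is $f(n_1\wedge n_2)^{\otimes 2}$, so the deformation parameter $t$ transforms by $\det(g)^{-2}$, i.e. by the square of the determinant character. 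Since $\det N$ descends to $L$ on $Z$ (i.e. $\bar\eps^*L=\det N$), this transformation law identifies $\mathcal L$ with $\bar L^{\otimes 2}$, and hence $\Exts^1(\Omega_X,\mathcal O_X)\cong i_*\bar L^{\otimes 2}$.

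Concretely I would assemble this as follows. Working on $P$ with the trivialization of $N_P$, the versal deformation of the transversal $A_1$ singularity has its one-dimensional $\Ext^1$ spanned by the image of the smoothing of $f$. Then I track the $GL(2)$-equivariant structure: the tautological identifications give $\Sym^2 N_P$ and $\det N_P$ as the relevant associated bundles, and the weight computation shows the smoothing class lives in the $(\det N)^{\otimes 2}$ isotypic piece. Descending from $P$ back to $\mathcal Z$ and then, via the $\mu_2$-gerbe $\bar\eps:\mathcal Z\to Z$, back to $Z$, I use $\bar\eps^*L=\det N$ to conclude $\mathcal L=L^{\otimes 2}$ and hence, pulling back by $\pi$, the stated formula with $\bar L^{\otimes 2}$.

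**The main obstacle** I expect is the global/equivariant bookkeeping rather than the local algebra, which is routine. Precisely, I must be careful that the one-dimensional $\Ext^1$ does not merely give an \emph{abstract} line bundle but the specific one $\bar L^{\otimes 2}$; this requires correctly matching the character by which $GL(2)$ acts on the smoothing parameter with the determinant of the normal bundle, and correctly handling the $\mu_2$-gerbe so that the $\mu_2$-weight of the smoothing class is even (hence descends to $Z$, consistent with $\ch K\neq 2$). Verifying that $f(n_1\wedge n_2)^{\otimes 2}$ is the genuinely invariant object — as asserted in Lemma \ref{bar1} — and that its vanishing cuts out $C$ with the correct scheme structure is the delicate point that makes the identification canonical rather than merely up to a scalar ambiguity.
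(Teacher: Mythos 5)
Your overall strategy is sound and your local computations are correct: the Jacobian computation showing that $\Exts^1(\Omega_X,\mathcal{O}_X)$ is \'etale-locally isomorphic to $i_*\mathcal{O}_Z$ (hence equals $i_*\mathcal{L}$ for some line bundle $\mathcal{L}$ on $Z$) matches the first reduction in the paper's proof, and your weight computation (the equation $f=x_{11}x_{22}-x_{12}^2$ rescaling by $\det(g)^{-2}$, so that the $T^1$-direction carries the character $\det^2$) is exactly the content of Lemma \ref{bar1}, which in the paper yields the theorem in the special case $\mathcal{X}=N$, $X=C$ via the conormal sequence $0\to \mathcal{O}_V(-C)_{|C}\to\Omega_{V|C}\to\Omega_C\to 0$.

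However, there is a genuine gap in your globalization step, and it is precisely the point the paper's proof is designed to handle. You assert that ``$\mathcal{L}$ is the line bundle recording how this generator transforms under change of trivialization of the normal data,'' and you then carry out the cocycle computation on the principal $GL(2)$-bundle $P\to\mathcal{Z}$ associated to $N$. But the \'etale-local isomorphisms $X\simeq Z\times\Spec K[x,y,z]/(xy-z^2)$ that produce your local trivializations of $\mathcal{L}$ have transition maps which are arbitrary automorphisms of the germ of $X$ along $Z$, not elements of $GL(2)$; a priori the resulting line bundle $\mathcal{L}$ could depend on higher-order data of the embedding $Z\subset X$ and not only on the normal bundle $N$. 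Your argument, as written, only computes $\mathcal{L}$ when $X$ is globally the cone $C$ over $Z$, i.e.\ it reproves the special case. The paper bridges this gap by degenerating the inclusion $\mathcal{Z}\to\mathcal{X}$ to the normal cone: this produces a one-parameter family of line bundles on $Z$ whose general member is the one coming from $\Exts^1(\Omega_X,\mathcal{O}_X)$ and whose special member is $\bar{L}^{\otimes 2}$, and separatedness of the Picard scheme of $Z$ forces them to coincide. Alternatively, your route can be completed without the degeneration by proving that the character through which a germ automorphism $\phi$ acts on the one-dimensional fiber of $T^1$ depends only on its linear part: lifting $\phi$ to $\Phi$ with $\Phi(f)=uf$, the action on the fiber of $T^1$ at a point $z\in Z$ is multiplication by $u(z)^{\pm1}$, and $u|_Z$ is determined by the linear part of $\Phi$ because $f$ is the lowest-order (quadratic) term. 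Some such argument is indispensable; without it the identification of $\mathcal{L}$ with $\bar{L}^{\otimes 2}$ does not follow from the $GL(2)$-equivariant bookkeeping alone, and this issue is not among the obstacles you flagged.
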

\begin{proof} Assume first that $\mathcal{X}=N$, hence $X=C$; then the result immediately follows from Lemma \ref{bar1}, by applying the functor $\mathcal{H}om(-, \mathcal{O}_X)$ to the exact sequence 
$$0\mapsto \mathcal{O}_{V}(-C)_{|C}\to \Omega_{V|C}\to \Omega_C\mapsto 0$$
For the general case, we first prove that the sheaf is the push-forward of a line bundle on $Z$, since this statement is \'etale local hence we reduce to the previous case. To identify the line bundle, we use the degeneration to the normal cone of the inclusion $\mathcal{Z}\to \mathcal{X}$; we get a one-parameter family of line bundles on $Z$ where the general one is $\Exts^1(\Omega_X,\mathcal{O}_X)$ and the special one is $\bar{L}^{\otimes 2}$; we conclude that they are equal since the Picard scheme of $Z$ is separated.
\end{proof}

\subsection{Deformation of varieties with transversal $\frac{1}{3}(1,1)$ singularity}

In this section we assume the characteristic to be different from two and three. Again we start with a special case. Assume that $Z$ is a smooth variety, and $F$ the total space of a rank $2$ vector bundle on $Z$. Let the group $\mu_3$ act on $F$ by scalars, and let $\mathcal{X}=[F/\mu_3]$ and $\eps:\mathcal{X}\to X$ be the morphism to the coarse moduli space, which is an affine variety, and indeed a cone over $Z$.\\
Assume now that $F=L_1\oplus L_2$ with $L_i$ line bundles on $Z$. Write $L_{ij}:=L_1^{\otimes i}\otimes L_2^{\otimes j}\in \Pic (Z)$. Let $V=\Sym^3F=L_{03}\oplus L_{12}\oplus L_{21}\oplus L_{30}$. Let $\pi:V\to Z$ be the projection, and write $\bar L_{ij}:=\pi^*(L_{ij})$. Finally, let $\sigma$ be the tautological section of $\pi^*V$, and write $\sigma=(s_0,s_1,s_2,s_3)$ with $s_i$ a section of $\bar L_{i,3-i}$.\\
There is a natural embedding $X\to V$, where $X$ parametrizes rank one tensors in $\Sym^3(F)$. Write $\widetilde L_{ij}:=\bar L_{ij|X}$.

\begin{Lemma}\label{bar2}
There is an exact sequence of coherent sheaves on $X$ 
$$\widetilde L_{54}^\vee\oplus \widetilde L_{45}^\vee\to \widetilde L_{24}^\vee\oplus \widetilde L_{33}^\vee \oplus \widetilde L_{42}^\vee\to \widetilde L_{03}^{\vee}\oplus \widetilde L_{12}^\vee \oplus \widetilde L_{21}^\vee \oplus \widetilde L_{03}^\vee\oplus \pi^*\Omega_{Z|X}\to \Omega_{X}\mapsto 0$$
where the first map $\alpha$ is given by the matrix $$
\left(\begin{array}{cc}
s_3 & s_2\\ 
s_2 & s_1\\ 
s_1 & s_0
\end{array}\right)$$
and the second map $\beta$ is given by the matrix $$
\left(\begin{array}{ccc}
s_2 & -s_3 & 0\\
-2s_1 & s_2 & s_3\\
s_0 & s_1 & -2s_2 \\
0 & -s_1 & s_0\\
\end{array}\right)$$
\end{Lemma}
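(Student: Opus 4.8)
The plan is to obtain the displayed complex by splicing together two standard exact sequences attached to the closed embedding $j\colon X\hookrightarrow V$, where $V$ is the total space of $\Sym^3F$ and $\mathcal I\subset\mathcal O_V$ is the ideal sheaf of $X$. The first is the conormal (second fundamental) sequence
$$\mathcal I/\mathcal I^2\xrightarrow{\;\beta\;}\Omega_{V}|_X\to\Omega_X\to 0,$$
and the second is a presentation of the conormal sheaf $\mathcal I/\mathcal I^2$ as an $\mathcal O_X$-module. The key geometric input is that, since $X$ parametrizes rank one tensors in $\Sym^3F$, it is fibrewise over $Z$ the affine cone over the twisted cubic: concretely $\mathcal I$ is generated by the three $2\times 2$ minors $f_1,f_2,f_3$ of the matrix $\bigl(\begin{smallmatrix}s_3&s_2&s_1\\ s_2&s_1&s_0\end{smallmatrix}\bigr)$, and these are exactly the equations $f_1=xw-yz$, $f_2=y^2-xz$, $f_3=z^2-yw$ already encountered in the proof of Theorem \ref{nonrig} under the substitution $(x,y,z,w)=(s_3,s_2,s_1,s_0)$.

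First I would identify the three nonzero terms together with their twists. The cotangent sequence of $\pi\colon V\to Z$ identifies the right-hand term with $\Omega_{V}|_X$, having the four line bundle summands $\widetilde L_{i,3-i}^\vee$ ($i=0,\dots,3$) generated by $ds_0,\dots,ds_3$ and the last summand $\pi^*\Omega_Z|_X$ (so the fourth line bundle summand is $\widetilde L_{30}^\vee$). Because $s_i$ is a section of $\bar L_{i,3-i}$, the minors satisfy $f_1\in\Gamma(\bar L_{33})$, $f_2\in\Gamma(\bar L_{42})$, $f_3\in\Gamma(\bar L_{24})$, so the three generators of $\mathcal I/\mathcal I^2$ give the middle term $\widetilde L_{24}^\vee\oplus\widetilde L_{33}^\vee\oplus\widetilde L_{42}^\vee$, and $\beta$ is the map $f_i\mapsto df_i$, i.e. the Jacobian of $(f_1,f_2,f_3)$ with respect to $(s_0,s_1,s_2,s_3)$ (the $\pi^*\Omega_Z$-component of each $df_i$ being zero, as the $f_i$ are fibrewise). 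To present $\mathcal I/\mathcal I^2$ I would invoke the Hilbert--Burch theorem: the two syzygies among the maximal minors of a $2\times 3$ matrix are the two rows of that matrix, giving $\alpha=\bigl(\begin{smallmatrix}s_3&s_2\\ s_2&s_1\\ s_1&s_0\end{smallmatrix}\bigr)$ and hence a presentation $R^{\oplus2}\xrightarrow{\alpha}R^{\oplus3}\to\mathcal I/\mathcal I^2\to 0$ after reducing the syzygy matrix modulo $\mathcal I$. A short weight count shows the two syzygies live in $\bar L_{54}$ and $\bar L_{45}$, producing the left-hand term $\widetilde L_{54}^\vee\oplus\widetilde L_{45}^\vee$; this twist bookkeeping is the analogue, in the split case $F=L_1\oplus L_2$, of the $GL(2)$-equivariant trivialization used in Lemma \ref{bar1}.

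It then remains to prove exactness of the spliced complex
$$\widetilde L_{54}^\vee\oplus\widetilde L_{45}^\vee\xrightarrow{\;\alpha\;}\widetilde L_{24}^\vee\oplus\widetilde L_{33}^\vee\oplus\widetilde L_{42}^\vee\xrightarrow{\;\beta\;}\Omega_{V}|_X\to\Omega_X\to 0 .$$
Exactness at $\Omega_X$ and at $\Omega_V|_X$ is immediate from the conormal sequence, and $\beta\circ\alpha=0$ follows by differentiating each syzygy relation $\sum_j M_{ij}f_j=0$ and restricting to $X$. Since exactness of a complex of coherent sheaves is étale-local, I would verify the remaining exactness at the middle term on an étale chart where all the $L_{ij}$ are trivial; there the complex becomes precisely the tail $R^{\oplus2}\xrightarrow{\psi_S}R^{\oplus3}\xrightarrow{\psi_J}R^{\oplus4}\to\Omega_R\to 0$ of the free resolution of $\Omega_R$ for $R=K[x,y,z,w]/(f_1,f_2,f_3)$ written down in the proof of Theorem \ref{nonrig}. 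The main obstacle is exactly this equality $\ker\beta=\operatorname{im}\alpha$: it amounts to showing that every relation $\sum_i r_i\,df_i=0$ holding on $X$ already comes from a Hilbert--Burch syzygy, i.e. is a combination of the rows of $M$, so that the twisted cubic cone carries no unexpected second-syzygy differentials. I expect to settle this by the explicit local resolution of that determinantal ideal (equivalently, the Buchsbaum--Rim/Eagon--Northcott description), after which the twists determined above upgrade the local statement to the asserted global one.
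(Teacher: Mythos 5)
Your proposal is correct and takes essentially the same route as the paper: the paper's entire proof consists of noting that $X$ is the scheme-theoretic zero locus in $V$ of the section $(s_0s_2-s_1^2,\ s_1s_2-s_0s_3,\ s_1s_3-s_2^2)$ of $\bar L_{24}\oplus\bar L_{33}\oplus\bar L_{42}$ and declaring the rest ``elementary homological algebra'', which is exactly the conormal-sequence-plus-Hilbert--Burch unpacking (with exactness at the middle term checked \'etale-locally against the twisted-cubic-cone resolution appearing in the proof of Theorem \ref{nonrig}) that you give. As a side benefit, your Jacobian computation produces $(0,-s_0,s_1)$ as the last row of $\beta$ and $\widetilde L_{30}^\vee$ as the fourth summand of the third term, which shows the printed $(0,-s_1,s_0)$ and repeated $\widetilde L_{03}^\vee$ are typos.
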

\begin{proof} Note that $X$ is the scheme-theoretic zero locus in $V$ of the section $(s_0s_2-s_1^2, s_1s_2-s_0s_3, s_1s_3-s_2^2)$ of $\bar L_{24}\oplus \bar L_{33}\oplus \bar L_{42}$. The rest of the statement is elementary homological algebra.
\end{proof}

\begin{Proposition}\label{bar3} 
The sheaf $\Exts^1(\Omega_X,{\mathcal O}_X)$ is isomorphic to $i_*(L_{21}\oplus L_{12})=i_*(F)\otimes\det F$.
\end{Proposition}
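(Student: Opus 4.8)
The plan is to dualize the locally free presentation of $\Omega_X$ furnished by Lemma \ref{bar2} and to extract $\Exts^1(\Omega_X,\mathcal O_X)$ as its middle cohomology. Write that presentation as $E_2\xrightarrow{\alpha}E_1\xrightarrow{\beta}E_0\to\Omega_X\to 0$, with $E_1=\widetilde L_{24}^\vee\oplus\widetilde L_{33}^\vee\oplus\widetilde L_{42}^\vee$, $E_2=\widetilde L_{54}^\vee\oplus\widetilde L_{45}^\vee$, and $E_0=(\pi^*V^\vee)_{|X}\oplus\pi^*\Omega_{Z|X}$. All three terms are locally free, so $\mathcal{H}om(-,\mathcal O_X)$ turns the presentation into a complex computing $\Exts^\bullet(\Omega_X,\mathcal O_X)$, and since $\beta\circ\alpha=0$ one obtains $\Exts^1(\Omega_X,\mathcal O_X)=\ker(\alpha^\vee)/\im(\beta^\vee)$, where $\alpha^\vee$ and $\beta^\vee$ are the transposes of the two explicit matrices. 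Because $\beta$ vanishes on the $\pi^*\Omega_{Z|X}$ summand, this factor contributes only to $\Exts^0=\mathcal{H}om(\Omega_X,\mathcal O_X)=T_X$; hence $\im(\beta^\vee)$ is the image of $(\pi^*V)_{|X}=\widetilde L_{03}\oplus\widetilde L_{12}\oplus\widetilde L_{21}\oplus\widetilde L_{30}$ inside $E_1^\vee=\widetilde L_{24}\oplus\widetilde L_{33}\oplus\widetilde L_{42}$, and the whole computation takes place inside $E_1^\vee$.

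Next I would determine the support and the rank. Over the complement of the zero section $Z=\{s_0=\cdots=s_3=0\}$ the scheme $X$ is smooth, since the punctured cone over the twisted cubic is nonsingular; therefore $\Exts^1(\Omega_X,\mathcal O_X)$ is supported on $Z$ and equals $i_*G$ for some coherent sheaf $G$ on $Z$. Restricting to a fibre, that is working \'etale locally on $Z$, identifies $G$ with $\Ext^1(\Omega_R,R)$ for the twisted cubic cone $R=K[s_0,s_1,s_2,s_3]/(s_0s_2-s_1^2,\,s_1s_2-s_0s_3,\,s_1s_3-s_2^2)$. This is precisely the singularity and the computation of (\ref{ext3}) in the proof of Theorem \ref{nonrig}, where $\dim_K\Ext^1(\Omega_R,R)=2$ and $\Ext^2(\Omega_R,R)=0$. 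Thus $G$ is locally free of rank two on $Z$.

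It remains to identify $G$, which is the heart of the matter. The construction is equivariant for the structure group of $F$: for $F=L_1\oplus L_2$ this is the torus that grades everything by $\Pic(Z)$, each $s_i$ being a section of $\bar L_{i,3-i}$ and both $\alpha$ and $\beta$ being homogeneous. I would carry this grading through the subquotient $\ker(\alpha^\vee)/\im(\beta^\vee)$ and read off the two characters attached to the two surviving generators; these turn out to be those of $L_{21}$ and $L_{12}$. The conceptual reason is representation-theoretic: as a $GL(F)$-representation the fibre $\Ext^1(\Omega_R,R)$ of the transversal $\frac13(1,1)$ singularity is the standard representation twisted by the determinant, whose torus weights are exactly $(2,1)$ and $(1,2)$. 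Globalizing yields $G\cong L_{21}\oplus L_{12}$, that is $\Exts^1(\Omega_X,\mathcal O_X)\cong i_*(L_{21}\oplus L_{12})=i_*(F)\otimes\det F$, the rank two analogue of the identity $\Exts^1=i_*(\det N)^{\otimes 2}$ established for transversal $A_1$ singularities in Theorem \ref{211}.

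The step I expect to be the main obstacle is precisely this identification of the twist: showing that the two weights are those of $L_{21}$ and $L_{12}$, and not some other rank two combination with the same total weight. The fibrewise count of (\ref{ext3}) fixes only the rank, so one must genuinely track the $\Pic(Z)$-grading through the explicit kernel of $\alpha^\vee$ and image of $\beta^\vee$; this is the analogue, in the $\mu_3$ setting, of the separatedness of the Picard scheme used to pin down the line bundle in the proof of Theorem \ref{211}.
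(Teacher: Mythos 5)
Your skeleton is the same as the paper's: dualize the presentation of Lemma \ref{bar2}, so that $\Exts^1(\Omega_X,\mathcal O_X)=\ker(\alpha^\vee)/\im(\beta^\vee)$, and your preliminary reduction (smoothness of $X$ off the zero section plus the local product structure $X|_U\cong U\times\Spec(R)$, so that $\Exts^1(\Omega_X,\mathcal O_X)=i_*G$ with $G$ locally free of rank two by the count (\ref{ext3})) is correct and even fills in a detail the paper leaves implicit. But the proof stops exactly where the content begins. The identification of the two characters is asserted rather than proven: you write that they ``turn out to be'' those of $L_{21}$ and $L_{12}$, and the ``conceptual reason'' you offer --- that the fibre $\Ext^1(\Omega_R,R)$ is the standard representation of $GL(F)$ twisted by the determinant --- is, in the local model, precisely the statement of Proposition \ref{bar3}; you neither derive it nor cite an independent source, and (\ref{ext3}) pins down only the dimension. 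As you yourself concede in your final paragraph, what is actually established is that $G$ is locally free of rank two and decomposes into two characters; any other pair of characters would be equally consistent with your argument as written. This is a genuine gap, not a routine verification left to the reader.

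The paper closes exactly this gap by a syzygy computation: it exhibits the map
$$\gamma=\left(\begin{array}{cccccc} s_0 & s_1 & s_2 & 0 & 0 & 0\\ -s_1 & -s_2 & -s_3 & s_0 & s_1 & s_2\\ 0 & 0 & 0 & -s_1 & -s_2 & -s_3 \end{array}\right)$$
from a sum of line bundles of weights $L_{21},L_{12},L_{03},L_{30},L_{21},L_{12}$ to $\widetilde L_{24}\oplus\widetilde L_{33}\oplus\widetilde L_{42}$, whose image is $\ker(\alpha^\vee)$ (exactness uses the equations $s_0s_2-s_1^2=s_1s_2-s_0s_3=s_1s_3-s_2^2=0$ of $X$). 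Comparing with the columns of $\beta^\vee$ one finds $\beta^\vee_1=\gamma_3$, $\beta^\vee_4=\pm\gamma_4$, $\beta^\vee_2=-2\gamma_2-\gamma_6$ and $\beta^\vee_3=\gamma_1+2\gamma_5$; hence modulo $\im(\beta^\vee)$ the generators $\gamma_3,\gamma_4$ die, $\gamma_1$ and $\gamma_6$ become multiples of $\gamma_5$ and $\gamma_2$ (characteristic $\neq 2$), and the quotient is generated by one class of weight $L_{21}$ and one of weight $L_{12}$. Since $\mathcal I_{Z/X}$ kills the quotient, this gives a surjection $(L_{21}\oplus L_{12})|_Z\to G$, which your rank-two observation upgrades to an isomorphism. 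To complete your proposal you must either carry out this kernel computation while tracking the bigrading, as you propose but do not do, or independently prove the equivariant identification of $T^1$ of the cone over the twisted cubic (e.g.\ via Pinkham's theory of deformations of cones with $\mathbb G_m$-action); neither appears in your text.
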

\begin{proof} Let $\alpha$ be the map in Lemma \ref{bar2}. Then $\alpha^\vee$ is the second map in an exact sequence $$
\widetilde L_{21}\oplus \widetilde L_{12}\oplus \widetilde L_{03}\oplus \widetilde L_{30}\oplus \widetilde L_{21}\oplus \widetilde L_{11} \to \widetilde L_{24}\oplus \widetilde L_{33}\oplus \widetilde L_{42}\to \widetilde L_{54}\oplus\widetilde L_{45},$$
where the first map $\gamma$ is given by the matrix 
$$\left(\begin{array}{cccccc}
s_0 & s_1 & s_2 & 0 & 0 & 0\\
-s_1 & -s_2 & -s_3 &s_0 & s_1 & s_2 \\
 0 & 0 & 0 & -s_1 & -s_2 & -s_3\\
\end{array}\right)$$
We conclude by observing that ${\mathcal I}_{Z/X}\cdot \Exts^1(\Omega_X, {\mathcal O}_X)=0$ and by comparing the two exact sequences.
\end{proof}

Recall that by Definition \ref{transing} a variety $X$ which, \'etale locally near each point of its reduced singular locus $Z$, has type $\frac{1}{3}(1,1,0,\ldots,0)$ will be said to have a transversal $\frac{1}{3}(1,1)$ singularity along $Z$. We denote by $\eps:\mathcal{X}\to X$ the associated canonical stack. Let $\mathcal{Z}:=\eps^{-1}(Z)_{red}$ and $\bar\eps:=\eps|_{\mathcal{Z}}:\mathcal{Z}\to Z$, which is a $B\mu_3$-gerbe. 

\begin{Corollary}\label{311}
Assume that $X$ has a transversal $\frac{1}{3}(1,1)$ singularity along $Z$, and moreover that there exist line bundles $L_1, L_2$ on $\mathcal Z$ such that $N:=N_{\mathcal{Z}/\mathcal{X}}$ fits into an exact sequence 
$$
0\mapsto \bar L_1\to N\to \bar L_2\mapsto 0
$$
Write $ \bar L_{ij}:=\bar L_1^{\otimes i}\otimes \bar L_2^{\otimes j}$; when $i+j$ is a multiple of three, write $L_{ij}$ for the line bundle on $Z$ whose pull-back to $\mathcal Z$ is $\bar L_{ij}$. If $H^0(Z,L_{12})=H^0(Z,L_{21})=0$ then we have $H^0(X,\Exts^1(\Omega_X,{\mathcal O}_X))=0$.
\end{Corollary}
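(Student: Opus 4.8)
The plan is to show that $\Exts^1(\Omega_X,{\mathcal O}_X)$ is the pushforward to $X$ of a rank two locally free sheaf $\mathcal F$ on $Z$ which fits into a short exact sequence with sub and quotient $L_{21}$ and $L_{12}$, and then to read off the vanishing of $H^0$ directly from the hypotheses. The identification of $\mathcal F$ will be the heart of the matter; the cohomological conclusion is then immediate.

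First I would localize on $Z$. Since $X$ is smooth away from its singular locus $Z$, the sheaf $\Exts^1(\Omega_X,{\mathcal O}_X)$ is supported on $Z$. The transversal $\frac{1}{3}(1,1)$ condition is \'etale local, and \'etale locally on $Z$ the normal bundle $N$ splits as a sum of two line bundles, so \'etale locally $X$ is exactly the split cone of Proposition \ref{bar3}. Hence $\Exts^1(\Omega_X,{\mathcal O}_X)\cong i_*\mathcal F$ with $\mathcal F$ a rank two locally free sheaf on $Z$, \'etale locally isomorphic to $L_{21}\oplus L_{12}$.

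Next I would identify $\mathcal F$ globally, imitating the proof of Theorem \ref{211}. The assignment $F\mapsto F\otimes\det F$ computing the answer of Proposition \ref{bar3} is functorial, so tensoring the given filtration $0\to \bar L_1\to N\to \bar L_2\to 0$ by the line bundle $\det N=\bar L_1\otimes\bar L_2$ yields an exact sequence $0\to \bar L_{21}\to N\otimes\det N\to \bar L_{12}\to 0$ on $\mathcal Z$; every term has trivial $\mu_3$-weight and therefore descends to $Z$. I would then use the degeneration to the normal cone of $\mathcal Z\subset\mathcal X$: over ${\mathbb A}^1\setminus\{0\}$ the family is a product, so its relative $\Exts^1$ is the constant sheaf $\mathcal F$, while the central fibre is the cone associated to $N$, whose $\Exts^1$ is \'etale locally the descent of $N\otimes\det N$. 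Passing to the sub and quotient line bundles of the descended filtration and invoking the separatedness of $\Pic(Z)$, exactly as in Theorem \ref{211}, I would conclude that $\mathcal F$ is the descent of $N\otimes\det N$, so that there is an exact sequence on $Z$
$$0\to L_{21}\to \mathcal F\to L_{12}\to 0.$$

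Finally, taking global sections is left exact, so the hypotheses $H^0(Z,L_{21})=H^0(Z,L_{12})=0$ force $H^0(Z,\mathcal F)=0$; since $i$ is a closed immersion this gives $H^0(X,\Exts^1(\Omega_X,{\mathcal O}_X))=H^0(Z,\mathcal F)=0$, as required. The main obstacle is the global identification of $\mathcal F$: the separatedness of $\Pic(Z)$ used in Theorem \ref{211} controls only line bundles, whereas $\mathcal F$ has rank two. I would therefore apply the degeneration argument not to $\mathcal F$ directly but to the sub and quotient line bundles $L_{21}$ and $L_{12}$, relying on the functoriality of $F\mapsto F\otimes\det F$ to ensure that the filtration coming from $0\to\bar L_1\to N\to\bar L_2\to 0$ is preserved in the limit and thus transports to the claimed exact sequence for $\mathcal F$.
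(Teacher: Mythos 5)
Your overall strategy (reduce \'etale locally to Proposition \ref{bar3}, then use the degeneration to the normal cone) is the same as the paper's, but where the paper settles for an inequality on $h^0$ via semi-continuity, you insist on an exact global identification of $\mathcal F$ together with a filtration $0\to L_{21}\to\mathcal F\to L_{12}\to 0$, and this is where the argument breaks. Two steps are unjustified. First, Proposition \ref{bar3} computes $\Exts^1(\Omega_X,\mathcal O_X)$ only when the rank two bundle is split, $F=L_1\oplus L_2$; the ``functoriality of $F\mapsto F\otimes\det F$'' is a statement about the shape of the answer, not about the computation, and it does not by itself show that the cone over a \emph{non-split} $N$ has $\Exts^1$ equal to the descent of $N\otimes\det N$ (that would require redoing Lemma \ref{bar2} and Proposition \ref{bar3} relative to the extension, which you do not do). Second, and more seriously, your transport of the filtration goes in the wrong direction. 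In the degeneration to the normal cone, $\mathcal F$ sits on the \emph{general} fibres and the (conjecturally filtered) bundle sits on the \emph{special} fibre. Separatedness of $\Pic(Z)$, as used in Theorem \ref{211}, identifies the special fibre of a family of \emph{line} bundles that is constant away from $0$; it determines the limit from the general fibre, never the general fibre from the limit, and it says nothing in rank two: moduli of rank two bundles are non-separated, and the very mechanism in play here --- an extension degenerating to the split bundle --- shows that a family with constant general fibre can jump at $0$. Nor does applying the argument ``to the sub and quotient line bundles separately'' repair this: a sub-line-bundle of the special fibre need not deform to a sub-line-bundle of nearby fibres (that lifting problem is obstructed), so the filtration on the central fibre does not transport to $\mathcal F$.

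The conclusion is nonetheless within reach of your own ingredients once the intermediate claim is weakened. Upper semi-continuity of $h^0$ in the flat family of $\Exts^1$ sheaves gives $h^0(Z,\mathcal F)\le h^0$ of the special fibre; a second degeneration, of $N$ to $\bar\eps^*L_1\oplus\bar\eps^*L_2$, reduces that special fibre to the split situation where Proposition \ref{bar3} applies verbatim, and there $h^0(Z,L_{12}\oplus L_{21})=0$ by hypothesis. This two-step semi-continuity argument --- which never identifies $\mathcal F$ and never produces a filtration on it --- is exactly the paper's proof.
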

\begin{proof}
Since \'etale locally we are in the situation of the Proposition \ref{bar3}, we have $\Exts^1(\Omega_X,{\mathcal O}_X)=i_*E$ where $E$ is a rank two vector bundle on $Z$ and $i:Z\to X$ is the inclusion. We can again degenerate the inclusion $\mathcal{Z}\to \mathcal{X}$ to the normal cone, this degenerates $E$ to some other line bundle $E'$, and by semi-continuity it will be enough to show that $E'$ does not have global sections on $Z$. We are now assuming that $\mathcal{X}=N$; again we can degenerate $N$ to $\bar\eps^*L_1\oplus\bar\eps^*L_2$, hence we are now in the set-up of Proposition \ref{bar3}, and $E'$ degenerates to $L_{12}\oplus L_{21}$. By our assumptions the latter has no non-zero global sections.
\end{proof}

We are ready to prove the main result of this section.

\begin{Theorem}\label{rigcms}
If $g+n> 4$, over an algebraically closed field of characteristic zero, the coarse moduli space $\overline{M}_{g,n}$ is rigid.
\end{Theorem}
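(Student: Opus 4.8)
Write $X:=\overline{M}_{g,n}$; the goal is to show $\Ext^1(\Omega_X,\mathcal O_X)=0$. The plan is to feed the local-to-global spectral sequence for $\Ext$ with the two ingredients already prepared in this section. That spectral sequence gives an exact sequence
$$0\to H^1(X,T_X)\to \Ext^1(\Omega_X,\mathcal O_X)\to H^0(X,\mathcal{E}xt^1(\Omega_X,\mathcal O_X)),$$
so it is enough to prove the two vanishings $H^1(X,T_X)=0$ and $H^0(X,\mathcal{E}xt^1(\Omega_X,\mathcal O_X))=0$. The first is exactly Theorem \ref{loctriv}, which applies because $g+n>4\ge 4$ and says that $X$ has no non-trivial locally trivial deformations; so the whole content is the vanishing of the global sections of the local $\Ext$ sheaf.

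For that vanishing I would first do the codimension bookkeeping. The sheaf $\mathcal{E}xt^1(\Omega_X,\mathcal O_X)$ is supported on $\Sing(X)$, and by Remark \ref{highcodim} it has no nonzero sections supported on a closed subset of codimension $\ge 3$. By Proposition \ref{sing} the codimension-two part of $\Sing(X)$ is $Z_4\cup Z_6\cup Y\cup W$, each $Z_i$ containing a dense open $Z_i^0$ with $Z_i\setminus Z_i^0$ of codimension $\ge 2$ in $Z_i$ (hence $\ge 4$ in $X$), along which $X$ has a transversal $A_1$ singularity for $Z_4,Y,W$ and a transversal $\tfrac13(1,1)$ singularity for $Z_6$. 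Letting $\Sigma'$ be the union of the $Z_i\setminus Z_i^0$ with all components of $\Sing(X)$ of codimension $\ge 3$, the set $\Sigma'$ has codimension $\ge 3$, so Remark \ref{highcodim} makes the restriction
$$H^0(X,\mathcal{E}xt^1(\Omega_X,\mathcal O_X))\hookrightarrow H^0\big(X\setminus\Sigma',\,\mathcal{E}xt^1(\Omega_X,\mathcal O_X)\big)$$
injective, and on $X\setminus\Sigma'$ the singular locus is the disjoint union of the $Z_i^0$.

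Next I would run the two local computations on the pieces around the $Z_i^0$. Along $Z_4^0,Y^0,W^0$ Theorem \ref{211} gives $\mathcal{E}xt^1(\Omega_X,\mathcal O_X)=i_*\bar L^{\otimes 2}$ with $\bar\eps^*L=\det N$, and along $Z_6^0$ Corollary \ref{311} applies once $N=N_{\mathcal Z_6/\mathcal X}$ is written as an extension of line bundles (it in fact splits into its two weight spaces). The substance is identifying these bundles, and here the equivariant input of Lemma \ref{facile} and the local models of Proposition \ref{M12} are used to read the stabilizer weights on the two normal directions — the node-smoothing direction $T_pE\otimes T_qC$ and the modulus/attaching direction; passing to the canonical stack removes the pseudo-reflection coming from the elliptic (resp. hyperelliptic) involution and, for $Z_4$ and $Z_6$, replaces the smoothing coordinate by its square. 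The upshot should be that all the descended bundles (the $L^{\otimes 2}$ of Theorem \ref{211} and the $L_{12},L_{21}$ of Corollary \ref{311}) are negative tensor powers of $\psi$-classes: e.g. $\det N\cong(\psi_{n+1}^\vee)^{\otimes 2}$ along $Z_4^0\cong\overline M_{g-1,n+1}$, the classes $L_{12},L_{21}$ are $(\psi_{n+1}^\vee)^{\otimes 4}$ and $(\psi_{n+1}^\vee)^{\otimes 2}$ along $Z_6^0$, and along $Y^0\cong\overline M_{1,1}\times\overline M_{g-1,n}$ and $W^0$ (a genus-two factor times a moduli space) one K\"unneth factor is a negative power of an ample (on $\overline M_{1,1}$) or nef and big $\psi$-class. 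Since each $Z_i$ is a normal projective product of moduli spaces and $Z_i\setminus Z_i^0$ has codimension $\ge 2$ in $Z_i$, Hartogs' principle gives $H^0(Z_i^0,-)=H^0(Z_i,-)$, and every such group vanishes because the bundle is dual to a nef and big $\psi$-class (this also verifies the hypotheses $H^0(Z_6,L_{12})=H^0(Z_6,L_{21})=0$ of Corollary \ref{311}). Hence $H^0(X\setminus\Sigma',\mathcal{E}xt^1(\Omega_X,\mathcal O_X))=0$, so $H^0(X,\mathcal{E}xt^1(\Omega_X,\mathcal O_X))=0$ and $\Ext^1(\Omega_X,\mathcal O_X)=0$.

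The main obstacle is this third step: correctly identifying each normal bundle $N_{\mathcal Z_i/\mathcal X}$ as a $\mu_n$-equivariant bundle from Lemma \ref{facile}, carrying out the canonical-stack reduction — separating the pseudo-reflection from the genuine part of the stabilizer and accounting for the resulting squaring of the node-smoothing coordinate — and then recognizing the descended line bundles as negative powers of nef and big $\psi$-classes so that their cohomology vanishes. By contrast, the spectral-sequence reduction, the codimension estimates, and the appeals to Theorems \ref{loctriv}, \ref{211} and Corollary \ref{311} are formal.
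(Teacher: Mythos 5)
Your proposal is correct and is essentially the paper's own proof: Theorem \ref{loctriv} together with the local-to-global Ext sequence reduces the claim to $H^0(\overline M_{g,n},\mathcal{E}xt^1(\Omega_{\overline M_{g,n}},\mathcal O_{\overline M_{g,n}}))=0$; Remark \ref{highcodim} and Proposition \ref{sing} localize this on $Z_4^0,Z_6^0,Y^0,W^0$; and Theorem \ref{211}, Corollary \ref{311}, and the identification of the descended bundles as negative powers of nef and big $\psi$-classes give the vanishing (the paper carries out that identification via an explicit diagram comparing $\overline{\mathcal M}_{g,n}$ with its canonical stack, where the order-two ramification of $\overline{\mathcal M}_{g,n}\to\overline{\mathcal M}_{g,n}^{can}$ along $\Delta_1$ produces exactly the squaring of the smoothing direction you describe, and then uses Hartogs plus Kodaira vanishing, just as you do). Your explicit bundle identifications agree with the paper's, and the minor imprecisions in your sketch --- the first factor of $Y^0$ is the coarse space of the two-torsion locus $A\subset\overline{\mathcal M}_{1,2}$ rather than $\overline M_{1,1}$, and the involutions along $Y$ and $W$ are genuine transversal $A_1$ automorphisms, not pseudo-reflections removed by the canonical stack --- are harmless, since the singularity types are quoted from Proposition \ref{sing} and only the $\overline M_{g-1,n}$-type K\"unneth factor is needed for the vanishing.
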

\begin{proof}
By Theorem \ref{loctriv} we know that $\overline{M}_{g,n}$ does not have locally trivial deformations. To conclude it is enough to prove that $H^{0}(\overline{M}_{g,n},\mathcal{E}xt^{1}(\Omega_{\overline{M}_{g,n}},\mathcal{O}_{\overline{M}_{g,n}}))=0$.\\
Now, $\mathcal{E}xt^{1}(\Omega_{\overline{M}_{g,n}},\mathcal{O}_{\overline{M}_{g,n}})$ is a coherent sheaf supported on $\Sing(\overline{M}_{g,n})$. Let $S_3\subset \Sing(\overline M_{g,n})$ be the union of all irreducible components which have codimension greater or equal to three. By Remark \ref{highcodim} there are no sections of $\mathcal{E}xt^{1}(\Omega_{\overline{M}_{g,n}},\mathcal{O}_{\overline{M}_{g,n}})$ supported on $S_3$. By Proposition \ref{sing} we know the codimension two components of $\overline{M}_{g,n}$ are $Z_6$, $Z_4$, $Y$, and $W$, and again there are no sections supported on $Z_6\setminus Z_6^0$, since by Proposition \ref{sing} $Z_6^0$ has codimension at least two in $Z_6$, and similarly for $Z_4$, $Y$, and $W$. Thus it will be enough to show $H^0(Z_6^0, \mathcal{E}xt^{1}(\Omega_{\overline{M}_{g,n}},\mathcal{O}_{\overline{M}_{g,n}})_{|Z_6^0})=0$ and similarly for $Z_4^0$, $Y^0$ and $W^0$. We will give in detail the proof for $Z_6^0$ and sketch the other cases.\\ 
Let $\mathcal Z_6$ be the image in $\overline{\mathcal M}_{g,n}$ of $[E_6]\times \overline{\mathcal M}_{g-1,n+1}=B\mu_6\times \overline{\mathcal M}_{g-1,n+1}$; let $\mathcal Z_6^{can}$ be its image in $\overline{\mathcal M}^{can}_{g,n}$; $\mathcal Z_6^{can}$ is the inverse image of $Z_6$ in the canonical stack, with the reduced induced structure. Denote as usual by $\Delta_1\subset \overline{\mathcal M}_{g,n}$ the divisor of rational tails, and let $\Delta_1^{can}$ be its image in $\overline{\mathcal M}^{can}_{g,n}$. Consider the commutative diagram
  \[
  \begin{tikzpicture}[xscale=3.5,yscale=-1.2]
    \node (A0_0) at (0, 0) {$B\mu_6\times\overline{\mathcal{M}}_{g-1,n+1}$};
    \node (A0_1) at (1, 0) {$\overline{\mathcal{M}}_{1,1}\times\overline{\mathcal{M}}_{g-1,n+1}$};
    \node (A1_0) at (0, 1) {$\mathcal{Z}_6$};
    \node (A1_1) at (1, 1) {$\Delta_1$};
    \node (A1_2) at (2, 1) {$\overline{\mathcal{M}}_{g,n}$};
    \node (A2_0) at (0, 2) {$\mathcal{Z}_6^{can}$};
    \node (A2_1) at (1, 2) {$\Delta_1^{can}$};
    \node (A2_2) at (2, 2) {$\overline{\mathcal{M}}_{g,n}^{can}$};
    \path (A0_1) edge [->]node [auto] {$\scriptstyle{p_1}$} (A1_1);
    \path (A0_0) edge [->]node [auto] {$\scriptstyle{i}$} (A0_1);
    \path (A2_0) edge [->]node [auto] {$\scriptstyle{i_2}$} (A2_1);
    \path (A1_0) edge [->]node [auto] {$\scriptstyle{i_1}$} (A1_1);
    \path (A1_1) edge [->]node [auto] {$\scriptstyle{j_1}$} (A1_2);
    \path (A1_0) edge [->,swap]node [auto] {$\scriptstyle{f_2}$} (A2_0);
    \path (A1_1) edge [->]node [auto] {$\scriptstyle{p_2}$} (A2_1);
    \path (A1_2) edge [->]node [auto] {$\scriptstyle{q}$} (A2_2);
    \path (A0_0) edge [->,swap]node [auto] {$\scriptstyle{f_1}$} (A1_0);
    \path (A0_0) edge [->,bend left=20,swap]node [auto] {$\scriptstyle{f}$} (A2_0);
    \path (A2_1) edge [->]node [auto] {$\scriptstyle{j_2}$} (A2_2);
    \path (A1_2) edge [->]node [auto] {$\scriptstyle{}$} (A2_2);
  \end{tikzpicture}
  \]
where the horizontal arrows are closed embeddings, and the vertical ones are proper, finite morphisms. Let $U$ be the inverse image in $B\mu_6\times \overline{\mathcal M}_{g-1,n+1}$ of $Z_6^0$; at all points of $U$, and their images, all the stacks of the diagram are smooth and all vertical maps are bijective on points; we have that $N_{i|U}=f^*N_{i_2|U}$ while $f^*i_2^*N_{j_2|U}=f_1^*i_1^*N_{j_1|U}^{\otimes 2}$ since the morphism $p_1$ is ramified along $\Delta_1$ of order two. Now we have that $N_i=pr_1^*T_{\overline{M}_{1,1}|B\mu_6}$ while $p_1^*N_{j_1}=pr_1^*\psi_1^\vee\otimes pr_2^*\psi_{n+1}^\vee$. To prove that $H^0(Z_6^0, \mathcal{E}xt^{1}(\Omega_{\overline{M}_{g,n}},\mathcal{O}_{\overline{M}_{g,n}})_{|Z_6^0}))=0$ is equivalent, in view of Corollary \ref{311} together with the fact that $f$ over $Z_6^0$ is finite and bijective on points, to show that $$
H^0(U, N_i^{\otimes 2}\otimes (pr_1^*\psi_1^\vee\otimes pr_2^*\psi_{n+1}^\vee)^{\otimes 2}_{|U})=
H^0(U, N_i\otimes (pr_1^*\psi_1^\vee\otimes pr_2^*\psi_{n+1}^\vee)^{\otimes 4}_{|U})=0$$
Since the complement of $U$ in $B\mu_6\times \overline{\mathcal M}_{g-1,n+1}$ has codimension two, it is enough to prove that $$
H^0(B\mu_6\times \overline{\mathcal M}_{g-1,n+1}, N_i^{\otimes 2}\otimes (pr_1^*\psi_1^\vee\otimes pr_2^*\psi_{n+1}^\vee)^{\otimes 2})=
H^0(B\mu_6\times \overline{\mathcal M}_{g-1,n+1}, N_i\otimes (pr_1^*\psi_1^\vee\otimes pr_2^*\psi_{n+1}^\vee)^{\otimes 4})$$
are zero. By \cite[Sections 2, 3]{Hac} psi-classes are nef and big. To conclude it is enough to use Kodaira vanishing \cite[Theorem A.1]{Hac}.\\
For the components $Z_4^0,Y^0,W^0$ the relevant commuting diagram is obtained in the same way (by taking images in $\overline{\mathcal M}_{g,n}$ in the second row and in $\overline{\mathcal M}^{can}_{g,n}$ in the third row), but taking as first row the following embeddings:
\begin{itemize}
\item[-] for $Z_4^0$, $B\mu_4\times \overline{\mathcal M}_{g-1,n+1}\to \overline{\mathcal M}_{1,1}\times \overline{\mathcal M}_{g-1,n+1}$ where $B\mu_4$ maps to the point $[E_4]$;
\item[-] for $Y^0$, $A\times \overline{\mathcal M}_{g-1,n}\to \overline{\mathcal M}_{1,2}\times \overline{\mathcal M}_{g-1,n+2}$ where $A\subset \overline{\mathcal M}_{1,2}$ is the closure of the locus parametrizing triples $(E,p_1,p_2)$ where $(E,p_1)$ is a smooth elliptic curve and $p_2$ is a $2-$torsion point in $E$;
\item[-] for $W^0$, $A'\times \overline{\mathcal M}_{g-2,n+1}\to \overline{\mathcal M}_{2,1}\times \overline{\mathcal M}_{g-2,n+1}$ where $A'\subset \overline{\mathcal M}_{2,1}$ is the closure of the locus parametrizing triples $(C,p_1)$ where $C$ is a smooth genus $2$ curve and $p_1$ is apoint fixed by the hyperellitpic involution.
\end{itemize}
We argue in an analogous way and use Theorem \ref{211} instead of Corollary \ref{311}, and conclude as before using the fact that $\psi$ classes are nef and big.
\end{proof}

\section{Automorphisms of $\overline{M}_{g,n}$ over an arbitrary field}\label{aaf}

In this section we will use the main results of Sections \ref{autrig} and \ref{g0} to compute the automorphism groups of $\overline{M}_{g,n}$ over an arbitrary field. In order to do this we will apply the theory developed in Section \ref{autrig} taking $A = W(K)$, that is the ring of Witt vectors over $K$, see \cite{Wi} for details.\\ 
For our purposes it is enough to keep in mind that $W(K)$ is a discrete valuation ring with a closed point $x\in\Spec(W(K))$ with residue field $K$, and a generic point $\xi\in\Spec(W(K))$ with residue field of characteristic zero.

\begin{Proposition}\label{rigidautoscheme} Assume that $\Aut(\overline{M}^{\overline K}_{g,n})\cong S_n$ where $K$ is any field of characteristic zero. Then $\Aut(\overline{M}^{K}_{g,n})\cong S_n$. Furthermore, if $H^{0}(\overline{M}^{K}_{g,n},T_{\overline{M}^{K}_{g,n}})=0$ and $\overline{M}^{K_p}_{g,n}$ is rigid for some field $K_p$ of characteristic $p$ then $\Aut(\overline{M}^{K_p}_{g,n})\cong S_n$.
\end{Proposition}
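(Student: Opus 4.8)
The plan is to treat the two assertions separately, since the first concerns descent from $\overline K$ to $K$ in characteristic zero, while the second is a genuinely mixed-characteristic lifting statement that is the heart of the proposition.

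For the characteristic zero statement, I would argue as follows. We always have the symmetric group $S_n$ sitting inside $\Aut(\overline M_{g,n}^K)$ by permutation of marked points, so it suffices to prove the reverse inclusion. Applying Lemma \ref{c1} to the extension $K\subseteq \overline K$ gives an injection $\chi:\Aut(\overline M_{g,n}^K)\to\Aut(\overline M_{g,n}^{\overline K})\cong S_n$. Composing with this injection, every $K$-automorphism lands in $S_n$; but the $S_n$ action is already defined over $K$ (indeed over $\mathbb Z$) and $\chi$ is the identity on these permutation automorphisms, so $\chi$ restricted to $S_n\subseteq\Aut(\overline M_{g,n}^K)$ is the obvious inclusion into the target $S_n$. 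Since $\chi$ is injective and its image contains all of $S_n$, which is the whole target, $\chi$ is an isomorphism and $\Aut(\overline M_{g,n}^K)\cong S_n$.

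For the positive characteristic statement, I would set $A=W(K_p)$, the ring of Witt vectors, which by the discussion preceding the proposition is a complete discrete valuation ring with residue field $K_p$ (the closed point, special fiber) and fraction field $F$ of characteristic zero (the generic point $\xi$). Take $X=\overline M_{g,n}^A$ obtained by base change from $\overline{\mathcal M}_{g,n}$ over $\mathbb Z$; this is proper and separated over $\Spec(A)$, with special fiber $X_0=\overline M_{g,n}^{K_p}$ and generic fiber $X_\xi=\overline M_{g,n}^{F}$. The hypotheses are exactly what Theorem \ref{lift} needs: $X_0$ is normal, hence generically reduced, and rigid by assumption (as $\overline M_{g,n}^{K_p}$ is rigid), and $H^0(X_\xi,T_{X_\xi})=0$. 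The latter vanishing is what the assumption $H^0(\overline M_{g,n}^K,T_{\overline M_{g,n}^K})=0$ secures: since $X_\xi$ is a base change of $\overline M_{g,n}^K$ from a characteristic zero field to the larger field $F$, flat base change for coherent cohomology gives $H^0(X_\xi,T_{X_\xi})=H^0(\overline M_{g,n}^K,T_{\overline M_{g,n}^K})\otimes F=0$. Theorem \ref{lift} then yields an injection $\chi:\Aut(X_0)=\Aut(\overline M_{g,n}^{K_p})\hookrightarrow\Aut(X_\xi)$.

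It remains to identify the target and pin down the image. The generic fiber $X_\xi=\overline M_{g,n}^F$ sits over a characteristic zero field, so by the first part of the proposition (applied over $F$) we have $\Aut(X_\xi)\cong S_n$. As in the characteristic zero case, the permutation subgroup $S_n\subseteq\Aut(\overline M_{g,n}^{K_p})$ maps, under $\chi$, to the permutation subgroup of $\Aut(X_\xi)$, because the construction of $\chi$ in Theorem \ref{lift} is by lifting through the Witt vector family, along which the $S_n$ action extends (it is defined over $\mathbb Z$). Thus $\chi$ carries $S_n$ isomorphically onto all of $\Aut(X_\xi)\cong S_n$. Since $\chi$ is injective with image already equal to $S_n$, the subgroup $S_n\subseteq\Aut(\overline M_{g,n}^{K_p})$ must in fact be the whole group, giving $\Aut(\overline M_{g,n}^{K_p})\cong S_n$. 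The step I expect to be the main obstacle is verifying carefully that $\chi$ sends the ambient permutations to the corresponding permutations on the generic fiber — i.e.\ that the lifting procedure of Theorem \ref{lift} is compatible with the $\mathbb Z$-defined $S_n$ action — since everything hinges on the image of $\chi$ already containing $S_n$; the cohomological vanishings themselves are handed to us by hypothesis and by flat base change.
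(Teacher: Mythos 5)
Your proof is correct and follows essentially the same route as the paper: Lemma \ref{c1} for the characteristic zero descent, then Theorem \ref{lift} applied to the family $\overline{M}^{W(K_p)}_{g,n}$ over the Witt vectors, with the first part re-applied over the fraction field to identify the target, and the standard permutation subgroup forcing the injections to be isomorphisms. The only imprecision is that the fraction field $F$ of $W(K_p)$ need not contain the field $K$ of the hypothesis, so the base-change argument for $H^{0}(X_\xi,T_{X_\xi})=0$ should pass through $\mathbb{Q}$ (over which $\overline{M}_{g,n}$ is defined, so vanishing of $H^0$ over one characteristic zero field implies it over all); this is immediate, and the paper leaves the same point implicit.
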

\begin{proof} 
By Lemma \ref{c1} there is an injective morphism of groups
$$\chi:\Aut(\overline M^{K}_{g,n})\rightarrow \Aut(\overline M^{\overline K}_{g,n})$$
The hypothesis $\Aut(\overline M^{\overline K}_{g,n})\cong S_n$ forces $\chi$ to be an isomorphism. Let $A$ be a ring of Witt vectors $W(K_p)$ of $K_p$ and $K$ its quotient field. Recall that $\ch(K)=0$. By Theorem \ref{lift} we have an injective morphism of groups
$$\chi_p:\Aut(\overline{M}^{K_p}_{g,n})\rightarrow \Aut(\overline M^{K}_{g,n})$$
By the first part of the proof $\Aut(\overline M^{K}_{g,n})\cong S_n$, and $\chi_p$ is an isomorphism.
\end{proof}

Finally, we can prove analogue of Theorem \ref{rigidautoscheme} for the stack $\overline{\mathcal{M}}_{g,n}$.

\begin{Proposition}\label{rigidautostack} Assume that $\Aut(\overline{M}^{\overline K}_{g,n})\cong S_n$ and $H^{0}(\overline{\mathcal{M}}^{K}_{g,n},T_{\overline{\mathcal{M}}^{K}_{g,n}})=0$ for $K$ any field of characteristic zero. If for some field $K_p$ of characteristic $p$ the stack $\overline{\mathcal{M}}^{K_p}_{g,n}$ is rigid then $\Aut(\overline{\mathcal{M}}^{K_p}_{g,n})\cong S_n$.
\end{Proposition}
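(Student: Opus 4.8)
The plan is to run the lifting argument of Theorem \ref{lift} directly on the stack, transport automorphisms to the generic fibre where the characteristic is zero, and there compare the stack with its coarse space. Concretely, set $A=W(K_p)$, with fraction field $K$ of characteristic zero and residue field $K_p$, and let $\mathcal X:=\overline{\mathcal M}_{g,n}\times_{\Spec\mathbb Z}\Spec A$; this is a proper smooth Deligne-Mumford stack over $\Spec A$, with special fibre $\mathcal X_0=\overline{\mathcal M}^{K_p}_{g,n}$ and generic fibre $\mathcal X_\xi=\overline{\mathcal M}^{K}_{g,n}$.

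First I would secure the two vanishings needed to lift. By hypothesis $\mathcal X_0$ is rigid, i.e. $H^1(\mathcal X_0,T_{\mathcal X_0})=0$; then, exactly as in the opening line of the proof of Theorem \ref{lift}, cohomology and base change over the discrete valuation ring $A$ shows that the pushforward of $T_{\mathcal X/A}$ to $\Spec A$ is locally free and commutes with base change. Since its generic fibre is $H^0(\mathcal X_\xi,T_{\mathcal X_\xi})=0$ by assumption, this pushforward is the zero module, and hence $H^0(\mathcal X_0,T_{\mathcal X_0})=0$ as well.

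Next I would observe that the whole proof of Theorem \ref{lift} carries over to the stack $\mathcal X$: Lemma \ref{liftaut} is already stated for Deligne-Mumford stacks, so from $H^0(\mathcal X_0,T_{\mathcal X_0})=H^1(\mathcal X_0,T_{\mathcal X_0})=0$ every automorphism of $\mathcal X_n$ lifts uniquely to $\mathcal X_{n+1}$; the formal gluing step \cite[Section 8.1.5]{FAG} and the algebraization step \cite[Corollary 8.4.7]{FAG} apply to the proper Deligne-Mumford stack $\mathcal X$ over the complete ring $A$, and separatedness yields injectivity just as before. This produces an injective homomorphism $\chi_p:\Aut(\overline{\mathcal M}^{K_p}_{g,n})\to\Aut(\overline{\mathcal M}^{K}_{g,n})$; moreover, since the $S_n$-action is defined over $A$ and the lifts are unique, $\chi_p$ sends each permutation automorphism of $\mathcal X_0$ to the corresponding permutation automorphism of $\mathcal X_\xi$.

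It remains to identify the target and conclude. Over the characteristic zero field $K$ the stack is a separated orbifold, so Proposition \ref{autstack} gives an injection $\Aut(\overline{\mathcal M}^{K}_{g,n})\hookrightarrow\Aut(\overline{M}^{K}_{g,n})$, while the first part of Proposition \ref{rigidautoscheme} gives $\Aut(\overline{M}^{K}_{g,n})\cong S_n$; as the permutation action on the stack is compatible with $\chi$, the inclusions $S_n\subseteq\Aut(\overline{\mathcal M}^{K}_{g,n})\hookrightarrow S_n$ compose to the identity, whence $\Aut(\overline{\mathcal M}^{K}_{g,n})\cong S_n$. Feeding this back, I obtain $S_n\subseteq\Aut(\overline{\mathcal M}^{K_p}_{g,n})\xrightarrow{\chi_p}\Aut(\overline{\mathcal M}^{K}_{g,n})\cong S_n$ with composite the identity of the finite group $S_n$, forcing $\chi_p$ to be an isomorphism and hence $\Aut(\overline{\mathcal M}^{K_p}_{g,n})\cong S_n$. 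The hard part will be the algebraization step: one must invoke Grothendieck's existence theorem, in the form used in Theorem \ref{lift}, for the proper Deligne-Mumford stack $\mathcal X$ over $A$ rather than for a scheme; everything else is formal once this is in hand.
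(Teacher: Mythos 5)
Your proposal follows the same overall strategy as the paper's proof---Witt vectors, the vanishing $H^0(\mathcal X_0,T_{\mathcal X_0})=0$ deduced from rigidity plus cohomology and base change, unique infinitesimal lifting via Lemma \ref{liftaut}, passage to the characteristic-zero generic fibre, and the finite-group trick with the $S_n$-action to force the injection to be an isomorphism---but it diverges at the algebraization step, and that divergence is exactly the point the paper is careful about. You algebraize the compatible family of automorphisms directly on the stack, invoking \cite[Section 8.1.5]{FAG} and \cite[Corollary 8.4.7]{FAG} for the proper Deligne--Mumford stack $\mathcal X$ over $A$; but those results are stated for (formal) schemes, and, as you yourself concede in your last sentence, what you actually need is a Grothendieck existence theorem for proper Deligne--Mumford stacks. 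Such a theorem does exist in the literature (formal GAGA for stacks, e.g.\ Conrad, or Olsson--Starr), but it is not available from the paper's references. The paper's proof is engineered to avoid needing it: after lifting $\phi$ to $\phi_i\in\Aut(\overline{\mathcal M}^{A_i}_{g,n}/A_i)$ at each finite level, it applies Proposition \ref{autstack} over each Artinian ring $A_i$ to push the $\phi_i$ down to a compatible system $\tilde\phi_i\in\Aut(\overline{M}^{A_i}_{g,n}/A_i)$ of automorphisms of the coarse moduli \emph{schemes}, and only then algebraizes, so that the scheme-level machinery of Theorem \ref{lift} suffices; the resulting map $\chi:\Aut(\overline{\mathcal M}^{K_p}_{g,n})\rightarrow\Aut(\overline{M}^{K}_{g,n})$ lands directly in the coarse-space automorphism group in characteristic zero. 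Your route buys a cleaner intermediate statement (an injection between stack automorphism groups, with the comparison to the coarse space deferred entirely to characteristic zero) at the cost of a strictly stronger input; the paper's route looks more roundabout but is self-contained. If you replace your algebraization step by ``push down to coarse spaces level-by-level via Proposition \ref{autstack}, then algebraize the scheme automorphisms as in Theorem \ref{lift}'', your argument becomes precisely the paper's proof; alternatively, keep your route but cite a formal GAGA theorem for proper Deligne--Mumford stacks rather than \cite{FAG}. Everything else in your write-up (the base-change argument for the vanishing, the $S_n$-equivariance of the unique lifts, and the endgame $S_n\subseteq\Aut(\overline{\mathcal M}^{K_p}_{g,n})\hookrightarrow S_n$) matches the paper and is correct.
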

\begin{proof} Let again $A$ be a ring of Witt vectors $W(K)$ of $K_p$ with residue field $K$. Let $\phi\in \Aut(\overline {\mathcal M}^{K_p}_{g,n})$ be an automorphism. As in the proof of Theorem \ref{rigidautoscheme}, the rigidity assumption and semi-continuity imply that $H^0(\overline{\mathcal{M}}_{g,n}^{K_p},T_{\overline{\mathcal{M}}_{g,n}^{K_p}})=0$ hence by Lemma \ref{liftaut} $\phi$ is the restriction of a unique $\phi_i\in \Aut(\overline{\mathcal{M}}_{g,n}^{A_i}/A_i)$ for every $i\ge 1$. By Proposition \ref{autstack} the automorphism $\phi_i$ induces an automorphism $\tilde{\phi}_i\in \Aut (\overline{M}_{g,n}^{A_i}/A_i)$ such that the restriction of $\tilde{\phi}_{i+1}$ to $\overline{M}_{g,n}^{A_i}$ is $\tilde{\phi}_i$. Thus, reasoning as in Theorem \ref{lift} there exists a unique automorphism $\tilde{\phi}_A\in \Aut(\overline M_{g,n}^A/A)$ inducing $\tilde{\phi}_i$ for any $i\geq 0$. Let $\tilde{\phi}_{|\overline M_{g,n}^K}$ be the restriction of $\tilde{\phi}$ to $\overline M_{g,n}^K$. This construction yields and injective morphism of groups
$$
\begin{array}{cccc}
\chi: & \Aut(\overline{\mathcal{M}}_{g,n}^{K_p}) & \longrightarrow & \Aut(\overline{M}_{g,n}^{K})\\
 & \phi & \longmapsto & \tilde{\phi}_{|\overline{M}_{g,n}^{K}}
\end{array}
$$
Now, to conclude it is enough to argue as in the proof of Proposition \ref{rigidautoscheme}.
\end{proof}

Now, we are ready to prove the main result of this section.

\begin{Theorem}\label{aut1}
Let $K$ be any field then 
$$\Aut(\overline{M}_{0,n})\cong S_{n}$$
for any $n\geq 5$. Furthermore, if $K$ is any field with $\ch(K)\neq 2$ then
$$\Aut(\overline{\mathcal{M}}_{g,n})\cong\Aut(\overline{M}_{g,n})\cong S_{n}$$
for any $(g,n)\neq (2,1)$ such that $2g-2+n\geq 3$ and $n\geq 1$. Finally, over any field of characteristic zero we have that $\Aut(\overline{\mathcal{M}}_{2,1})\cong\Aut(\overline{M}_{2,1})$ is trivial, and $\Aut(\overline{\mathcal{M}}_{g})\cong\Aut(\overline{M}_{g})$ is trivial for any $g\geq 2$.
\end{Theorem}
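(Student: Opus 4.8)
The plan is to reduce every assertion to the characteristic-zero computations of the automorphism group (Bruno--Mella and the second author, collected in Appendix \ref{app}) and then transport them to an arbitrary field, using the field-extension injection of Lemma \ref{c1} and the generic-to-special lifting over the Witt vectors packaged in Propositions \ref{rigidautoscheme} and \ref{rigidautostack}. The one genuinely new ingredient each proposition demands is rigidity of the special fiber: for $g=0$ this is provided unconditionally by Theorem \ref{rig}, while for $g\ge 1$ it is the rigidity of $\overline{\mathcal M}_{g,n}$ in characteristic $p$ that carries the argument.

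First I would settle genus zero, where $\overline{\mathcal M}_{0,n}=\overline M_{0,n}$. Over an algebraically closed field of characteristic zero one has $\Aut(\overline M^{\overline K}_{0,n})\cong S_n$ for $n\ge 5$, and Theorem \ref{rig} shows that $\overline M_{0,n}$ is rigid over every field and that $H^0(\overline M_{0,n},T_{\overline M_{0,n}})=0$ once $n\ge 5$. Plugging these into Proposition \ref{rigidautoscheme}, its first half gives $\Aut(\overline M^K_{0,n})\cong S_n$ for every field $K$ of characteristic zero, and its second half, fed with the positive-characteristic rigidity and the $H^0$-vanishing of Theorem \ref{rig}, gives the same conclusion in characteristic $p$. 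This proves the first assertion and, because $2g-2+n\ge 3$ with $g=0$ forces $n\ge 5$ and the stack there coincides with its coarse space, also the $g=0$ instances of the second assertion.

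For $g\ge 1$ with $\ch(K)\neq 2$ I would argue separately in the two characteristics. In characteristic zero, $\Aut(\overline{\mathcal M}_{g,n})\cong S_n$ and $\Aut(\overline M_{g,n})\cong S_n$ are recorded in Appendix \ref{app}; moreover Hacking's rigidity \cite{Hac} (and finiteness of $S_n$) yields $H^0(\overline{\mathcal M}^K_{g,n},T)=H^0(\overline M^K_{g,n},T)=0$ over any characteristic-zero field $K$. In characteristic $p$ I would then apply Proposition \ref{rigidautostack} to the stack and Proposition \ref{rigidautoscheme} to the coarse space: each takes as input the characteristic-zero isomorphism with $S_n$ and the matching $H^0$-vanishing just noted, together with rigidity of the special fiber ($\overline{\mathcal M}^{K_p}_{g,n}$ and $\overline M^{K_p}_{g,n}$ respectively). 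Granting that rigidity, both propositions return the isomorphism with $S_n$; the injection $\Aut(\overline{\mathcal M}^{K_p}_{g,n})\hookrightarrow\Aut(\overline M^{K_p}_{g,n})$ of Proposition \ref{autstack}, compatible with the tautological action of $S_n$ on the marked points, then identifies the two copies of $S_n$ and gives $\Aut(\overline{\mathcal M}^{K_p}_{g,n})\cong\Aut(\overline M^{K_p}_{g,n})\cong S_n$.

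The remaining statements, over characteristic-zero fields only, concern $n\le 1$, where $S_n$ is trivial: here I would quote from Appendix \ref{app} (equivalently \cite{GKM}, \cite{Ma}) that $\Aut(\overline M_{2,1})$ and $\Aut(\overline M_g)$ are trivial for $g\ge 2$, and then use the injection $\Aut(\overline{\mathcal M})\hookrightarrow\Aut(\overline M)$ of Proposition \ref{autstack} to conclude that the corresponding stack groups are trivial as well, whence the asserted isomorphisms. The main obstacle is precisely the rigidity input needed in the positive-genus, positive-characteristic case: since the relevant $\psi$-classes are only semi-ample and no replacement for Kodaira vanishing is available in characteristic $p$, the rigidity of $\overline{\mathcal M}_{g,n}$ and $\overline M_{g,n}$ cannot be obtained by the methods reviewed in Section \ref{hacking}, so this part of the statement rests on that rigidity; genus zero escapes the difficulty only because Kapranov's explicit iterated blow-up lets Theorem \ref{rig} reduce the needed vanishing to the cohomology of $\mathcal O(-1)$ on a projective space.
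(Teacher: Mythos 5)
Your genus-zero argument and your treatment of the characteristic-zero cases agree with the paper. The gap is in the case $g\ge 1$, $\ch(K)=p>2$: there you feed Propositions \ref{rigidautoscheme} and \ref{rigidautostack} with ``rigidity of the special fiber'', i.e.\ with the rigidity of $\overline{\mathcal M}^{K_p}_{g,n}$ and $\overline M^{K_p}_{g,n}$ in characteristic $p$. That rigidity is not proven anywhere in the paper (nor elsewhere): Section \ref{hacking} explains precisely why Hacking's argument breaks down in positive characteristic, and the only unconditional positive-characteristic rigidity available is Theorem \ref{rig}, which is for $g=0$. So your argument, as you yourself concede (``granting that rigidity\dots this part of the statement rests on that rigidity''), establishes only a conditional statement for $g\ge 1$ --- which is the content of Propositions \ref{rigidautoscheme} and \ref{rigidautostack} themselves, and of the conditional claim in the abstract --- whereas Theorem \ref{aut1} is unconditional. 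A proof that assumes an unestablished hypothesis in its main case is a genuine gap, not a presentational difference.

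The ingredient you missed is that for $g\ge 1$ no transfer from characteristic zero is needed at all. Theorem \ref{alex} in Appendix \ref{app} is stated, and holds, over \emph{any} algebraically closed field of characteristic $\neq 2$: the point made there is that \cite[Theorem 0.9]{GKM} and the proof of \cite[Theorem 3.10]{Ma} work in characteristic $\neq 2$, not only over $\mathbb{C}$. Hence for $g\ge 1$, $2g-2+n\ge 3$, $n\ge 1$, $(g,n)\neq(2,1)$ the paper argues directly in the given characteristic: Theorem \ref{alex} gives $\Aut(\overline M^{\overline K}_{g,n})\cong S_n$ over the algebraic closure $\overline K$ of $K$; the injection of Lemma \ref{c1} (the first half of the proof of Proposition \ref{rigidautoscheme}), together with the fact that $S_n$ already acts on $\overline M^{K}_{g,n}$, forces $\Aut(\overline M^{K}_{g,n})\cong S_n$; and Proposition \ref{autstack} sandwiches $S_n\subseteq \Aut(\overline{\mathcal M}^K_{g,n})\hookrightarrow \Aut(\overline M^K_{g,n})\cong S_n$ to settle the stack statement. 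The Witt-vector lifting of Section \ref{autrig}, which does require rigidity of the special fiber, is needed only in genus zero, where Theorem \ref{mella} of Bruno--Mella is genuinely a characteristic-zero theorem and Theorem \ref{rig} supplies the required rigidity; in that case your argument coincides with the paper's.
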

\begin{proof}
Let us begin with the genus zero case. If $K$ is any field of characteristic zero the statement follows from Theorem \ref{mella} and the first part of Proposition \ref{rigidautoscheme}. If $K$ is any field of positive characteristic then by Theorem \ref{rig} $\overline{M}_{0,n}^K$ is rigid and $H^0(\overline{M}_{0,n}^K,T_{\overline{M}_{0,n}^K})$ for $n\geq 5$. Now, to conclude it is enough to apply the second part of Proposition \ref{rigidautoscheme}.\\ 
The statement for $(g,n)\neq (2,1)$, $2g-2+n\geq 3$ and $n\geq 1$ follows by Propositions \ref{rigidautoscheme}, \ref{autstack} together with Theorem \ref{alex}. Finally, Propositions \ref{rigidautoscheme}, \ref{autstack} and Remark \ref{n0} yield the result for $(g,n)= (2,1)$ and $g\geq 2, n =0$.
\end{proof}

By \cite[Proposition 4.2.8]{Pe} any group scheme over a field of characteristic zero is reduced. However, in positive characteristic a group scheme is not necessarily reduced. For instance in \cite{Li} there are examples of non-reduced Picard schemes. By Theorem \ref{aut1} we know that over any field of positive characteristic $\Aut(\overline{M}_{0,n})\cong S_{n}$ for any $n\geq 5$. A priori this is not enough to conclude that $\Aut(\overline{M}_{0,n})$ is reduced. Anyway we have the following proposition.

\begin{Proposition}\label{propred}
Let $K$ be a field of positive characteristic. Then $\Aut(\overline{M}_{0,n})$ is reduced.
\end{Proposition}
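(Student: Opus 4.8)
The plan is to realise $\Aut(\overline{M}_{0,n})$ as a group scheme $G:=\underline{\Aut}(\overline{M}_{0,n})$, locally of finite type over $K$ (the automorphism functor of the projective scheme $\overline{M}_{0,n}$ is representable, by an open subscheme of the Hilbert scheme of the graphs in $\overline{M}_{0,n}\times\overline{M}_{0,n}$), and to deduce reducedness from the vanishing of its Lie algebra. The guiding principle is that for a group scheme locally of finite type over a field, reducedness is controlled entirely by the behaviour at the identity section, since the group acts on itself by translations.

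First I would identify the Zariski tangent space at the identity $e\in G(K)$ with $H^0(\overline{M}_{0,n},T_{\overline{M}_{0,n}})$: an element of $T_eG$ is an automorphism of $\overline{M}_{0,n}\times_K\Spec K[\eps]/(\eps^2)$ restricting to the identity on the special fibre, and such automorphisms are classified by global vector fields. By Theorem \ref{rig}, for $n\ge 5$ we have $H^0(\overline{M}_{0,n},T_{\overline{M}_{0,n}})=0$ (the case $i=0$ is excluded only when $n=4$). Hence $T_eG=0$, so if $\mathfrak m_e\subset\mathcal O_{G,e}$ denotes the maximal ideal of the local ring at $e$, then $\mathfrak m_e/\mathfrak m_e^2=0$; Nakayama's lemma gives $\mathfrak m_e=0$, so $\mathcal O_{G,e}=K$ is a field and $G$ is reduced at the identity.

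It then remains to propagate reducedness from the identity to all of $G$. Since $H^0$ commutes with the flat base change $K\subseteq\overline K$, the tangent space of $G_{\overline K}=\underline{\Aut}(\overline{M}^{\overline K}_{0,n})$ at its identity is again zero, so $G_{\overline K}$ is reduced at $e$. Now every closed point of $G_{\overline K}$ is a $\overline K$-point, and translation by such a point is an automorphism of $G_{\overline K}$ carrying $e$ onto it; by homogeneity the local ring of $G_{\overline K}$ at every closed point is therefore a field. Hence $G_{\overline K}$ is reduced, that is $G$ is geometrically reduced, and in particular reduced over $K$.

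The one genuine subtlety—and the step I would handle most carefully—is this last propagation: over an imperfect field reducedness need not be preserved under base change, so one cannot invoke homogeneity directly over $K$. Passing to $\overline K$ and using that $T_eG$ remains zero after base change is precisely what makes the translation argument legitimate, and it simultaneously upgrades the conclusion to geometric reducedness. Everything else—the representability of $\underline{\Aut}$ for a projective scheme, the identification of $T_eG$ with global vector fields, and the vanishing supplied by Theorem \ref{rig}—is standard or already available.
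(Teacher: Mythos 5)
For $n\geq 5$ your argument is essentially the paper's own proof: the paper likewise identifies $T_{[\id]}\Aut(\overline{M}_{0,n})$ with $H^{0}(\overline{M}_{0,n},T_{\overline{M}_{0,n}})$ and invokes Theorem \ref{rig}. Your treatment is in fact more careful than the paper's one-line conclusion ``So $\Aut(\overline{M}_{0,n})$ is reduced'': spelling out representability, the Nakayama step at the identity, and in particular passing to $\overline{K}$ (where closed points are rational) before using translations, which correctly avoids the trap that over an imperfect field reducedness is not automatically homogeneous. All of that is sound.

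There is, however, a genuine gap: the proposition carries no restriction on $n$, and your proof silently drops $n=4$ --- exactly the case your own parenthesis flags as excluded from Theorem \ref{rig}. For $n=4$ the argument fails at its first real step: $\overline{M}_{0,4}\cong\mathbb{P}^{1}$, so $H^{0}(\overline{M}_{0,4},T_{\overline{M}_{0,4}})$ is three-dimensional and the tangent space at the identity does not vanish, so no conclusion about reducedness can be drawn this way. The paper closes this case in one line: $\Aut(\overline{M}_{0,4})\cong PGL(2)$, which is a smooth group scheme, hence reduced. (The case $n=3$ also falls formally outside your ``$n\geq 5$'', but there $T_{\overline{M}_{0,3}}=0$ and your argument applies verbatim.) Adding the $PGL(2)$ observation for $n=4$ would make your proof complete.
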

\begin{proof}
The tangent space of $\Aut(\overline{M}_{0,n})$ at the identity is given by
$$T_{[Id]}\Aut(\overline{M}_{0,n})\cong H^{0}(\overline{M}_{0,n},T_{\overline{M}_{0,n}})$$
If $n\geq 5$, by Theorem \ref{rig} we have $\dim(T_{[Id]}\Aut(\overline{M}_{0,n})) = h^{0}(\overline{M}_{0,n},T_{\overline{M}_{0,n}}) = 0$. So $\Aut(\overline{M}_{0,n})$ is reduced. If $n = 4$ then $\Aut(\overline{M}_{0,4}) \cong PGL(2)$ which is reduced as well.
\end{proof}

\appendix
\section{Automorphisms of $\overline{M}_{g,n}$ over algebraically closed fields}\label{app}
In this appendix we essentially extend the main result of \cite{Ma} on the automorphisms of $\overline{M}_{g,n}$ and $\overline{\mathcal{M}}_{g,n}$ working on an algebraically closed field $K$ with $\ch(K)\neq 2$.\\
Let us begin by discussing the case $g = 0$. In \cite{BM} A. Bruno and M. Mella, thanks to Kapranov's works \cite{Ka2}, managed to translate issues on the moduli space $\overline{M}_{0,n}$ in terms of classical projective geometry of $\mathbb{P}^{n-3}$. Studying linear systems on $\mathbb{P}^{n-3}$ with particular base loci they derived a theorem on the automorphisms of $\overline{M}_{0,n}$ over an algebraically closed field of characteristic zero.
\begin{Theorem}\cite[Theorem 3]{BM}\label{mella}
Let $K$ be an algebraically closed field of characteristic zero. Then the automorphism group of $\overline{M}_{0,n}$ is isomorphic to $S_{n}$ for any $n\geq 5$.
\end{Theorem}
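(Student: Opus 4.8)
The plan is to prove the two inclusions $S_n\subseteq \Aut(\overline{M}_{0,n})\subseteq S_n$. The first is immediate: permuting the $n$ marked points defines an injective homomorphism $S_n\hookrightarrow\Aut(\overline{M}_{0,n})$, so the whole content is the surjectivity, i.e. that every automorphism arises this way. Throughout I would work with Kapranov's presentation of $\overline{M}_{0,n}$ as an iterated blow-up $\kappa_n\colon\overline{M}_{0,n}\to\mathbb{P}^{n-3}$, legitimate here since $K$ is algebraically closed of characteristic zero, and I would use that $\Pic(\overline{M}_{0,n})$ is generated by the boundary classes $\delta_{0,S}$, indexed by partitions $\{S,S^c\}$ of $\{1,\dots,n\}$ with $|S|,|S^c|\ge 2$.

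Fix $\phi\in\Aut(\overline{M}_{0,n})$. The first and decisive step is to show that $\phi$ preserves the boundary, hence permutes the irreducible divisors $\delta_{0,S}$. Here $\phi$ acts on $\Pic(\overline{M}_{0,n})$ preserving the effective cone, the canonical class, and the collection of birational contractions, and one must single out the boundary divisors among all effective divisors sharing this behaviour. This is exactly the point where the projective geometry of the Kapranov model is brought to bear: one studies the linear systems on $\mathbb{P}^{n-3}$ whose base loci are the spans of the $n-1$ centres and checks that no effective divisor outside the boundary has the same contraction pattern. I expect this to be the main obstacle, and it is the technical heart of \cite{BM}.

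Granting that $\phi$ permutes the $\delta_{0,S}$, I would pass to combinatorics. Since $\phi$ restricts to isomorphisms between the divisors it permutes, it preserves the subcollection $\{\delta_{ij}:=\delta_{0,\{i,j\}}\}$, characterized intrinsically as the boundary divisors not isomorphic to a nontrivial product of positive-dimensional varieties (indeed $\delta_{0,S}\cong \overline{M}_{0,|S|+1}\times\overline{M}_{0,|S^c|+1}$, and one factor collapses to a point exactly when $|S|=2$ or $|S^c|=2$). Moreover $\phi$ preserves intersections, and for $n\ge 5$ one has $\delta_{ij}\cap\delta_{kl}\neq\emptyset$ if and only if $\{i,j\}\cap\{k,l\}=\emptyset$. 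Thus $\phi$ induces an automorphism of the graph on $2$-subsets of $\{1,\dots,n\}$ recording disjointness, i.e. the triangular/Kneser graph of $K_n$. By Whitney's reconstruction theorem this automorphism is induced by a unique $\sigma\in S_n$ precisely when $n\ge 5$; the failure at $n=4$, where the graph acquires extra symmetry (three disjoint edges, automorphism group of order $48$), matches $\Aut(\overline{M}_{0,4})\cong PGL(2)$ and explains the hypothesis $n\ge 5$.

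Finally I would replace $\phi$ by $\tau:=\sigma^{-1}\circ\phi$, an automorphism fixing every $\delta_{0,S}$ individually, and show $\tau=\id$. Because the boundary classes generate $\Pic(\overline{M}_{0,n})$, the map $\tau$ acts trivially on $\Pic$; in particular it is compatible with $\kappa_n$ and descends to $\bar\tau\in\Aut(\mathbb{P}^{n-3})=PGL_{n-2}$. Since $\kappa_n$ sends each $\delta_{0,\{i,n\}}$ to the $i$-th centre $p_i$, the descended $\bar\tau$ fixes the $n-1$ points $p_1,\dots,p_{n-1}$, which are in general linear position and hence form a projective frame of $\mathbb{P}^{n-3}$ (as $n-1=(n-3)+2$). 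The only projectivity fixing a frame is the identity, so $\bar\tau=\id$ and therefore $\tau=\id$. Combining, $\phi=\sigma\in S_n$, which yields $\Aut(\overline{M}_{0,n})\cong S_n$ for all $n\ge 5$.
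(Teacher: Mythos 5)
Your proposal is not a complete proof: the decisive step --- that every automorphism of $\overline{M}_{0,n}$ maps boundary divisors to boundary divisors --- is exactly the hard content of the theorem, and you do not prove it. You describe the kind of argument that should give it (linear systems on $\mathbb{P}^{n-3}$ with assigned base loci in the Kapranov model) and then defer to \cite{BM}, noting yourself that this is the technical heart of that paper. It is: once boundary preservation is granted, the rest of your argument (the disjointness graph of the $\delta_{ij}$, Whitney's theorem, descent along $\kappa_n$, the frame argument) is standard, so a write-up that assumes it has assumed essentially the whole theorem. For calibration, the paper itself does not prove this statement either: Theorem \ref{mella} appears in Appendix \ref{app} purely as a citation of \cite[Theorem 3]{BM}, accompanied by a one-sentence description of the Bruno--Mella strategy, which is the same description you give. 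So your outline reproduces the correct architecture, but as a blind, self-contained proof it has a genuine gap precisely where the real work lies.

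Two smaller points in the conditional part of your argument (boundary preservation $\Rightarrow$ $\phi\in S_n$), which is otherwise sound. First, your intrinsic characterization of the $\delta_{ij}$ requires knowing that $\overline{M}_{0,n-1}$ is not isomorphic to a nontrivial product $\overline{M}_{0,|S|+1}\times\overline{M}_{0,|S^c|+1}$ with $|S|,|S^c|\geq 3$; this is true but needs justification (for instance comparing Picard numbers, since $\rho(\overline{M}_{0,m})=2^{m-1}-\binom{m}{2}-1$ is incompatible with additivity on such products). Second, the descent of $\tau=\sigma^{-1}\circ\phi$ to $\mathbb{P}^{n-3}$ should be justified explicitly: since $\tau$ fixes each boundary divisor and the boundary classes span $\Pic(\overline{M}_{0,n})$, one gets $\tau^*\psi_n\cong\psi_n$, so $\kappa_n$ and $\kappa_n\circ\tau$ are both given by the complete linear system of $\psi_n$ and hence differ by some $\bar\tau\in PGL(n-2)$. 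With that in place your endgame is correct: $\bar\tau$ fixes the $n-1=(n-3)+2$ points $p_i=\kappa_n(\delta_{0,\{i,n\}})$ in general position, hence is the identity, and $\tau=\id$ follows from birationality of $\kappa_n$ and separatedness.
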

In \cite{Ma} a similar result is proven for $\overline{M}_{g,n}$ over the field $\mathbb{C}$ of complex numbers. However, the proof of \cite[Theorem 3.10]{Ma} works over any algebraically closed field $K$ with $\ch(K)\neq 2$. This is just because \cite[Theorem 0.9]{GKM} works over any algebraically closed field of characteristic different from two and a general $n$-pointed genus $g$ is automorphism-free if $2g-2+n\geq 3$.
\begin{Theorem}\label{alex}
Let $K$ be an algebraically closed field with $\ch(K)\neq 2$. If $2g-2+n\geq 3$, $g\geq 1$ and $(g,n)\neq (2,1)$ then 
$$\Aut(\overline{\mathcal{M}}_{g,n})\cong\Aut(\overline{M}_{g,n})\cong S_{n}$$
\end{Theorem}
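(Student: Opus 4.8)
The plan is to reduce the statement $\Aut(\overline{\mathcal{M}}_{g,n})\cong\Aut(\overline{M}_{g,n})\cong S_n$ over an algebraically closed field $K$ with $\ch(K)\neq 2$ to the already-known result over $\mathbb{C}$ (Theorem 3.10 of \cite{Ma}), by identifying exactly where the complex-analytic or characteristic-zero arguments enter and replacing them with inputs that survive in the new setting. The key observation, flagged in the statement itself, is that the proof of \cite[Theorem 3.10]{Ma} does not genuinely use $K=\mathbb{C}$: its two essential ingredients are \cite[Theorem 0.9]{GKM}, which computes the nef cone / extremal rays of $\overline{M}_{g,n}$ and is proved over any algebraically closed field with $\ch(K)\neq 2$, and the fact that a general $n$-pointed genus $g$ curve is automorphism-free when $2g-2+n\geq 3$, which is a geometric statement independent of the characteristic.

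First I would recall the structure of Massarenti's argument: any automorphism $\phi$ of $\overline{M}_{g,n}$ must preserve the structure of the boundary (since the boundary is intrinsically characterized, e.g.\ as a suitable locus in the nef cone or via the canonical class), hence permutes boundary divisors; using \cite[Theorem 0.9]{GKM} to control the extremal rays and the action on Néron–Severi, one shows $\phi$ must act on the marked points by an element of $S_n$, and then that the only automorphisms fixing the boundary combinatorics are the point-permutations themselves. Every step here is phrased in terms of intersection theory and the cone of curves, so I would verify line by line that each cited lemma from \cite{GKM} and \cite{Ma} holds with $\mathbb{C}$ replaced by $K$ algebraically closed, $\ch(K)\neq 2$; the hypotheses $2g-2+n\geq 3$, $g\geq 1$, $(g,n)\neq(2,1)$ are precisely those under which the automorphism-freeness of the generic curve and the GKM description both apply, so no new exceptional cases should arise.

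Having established $\Aut(\overline{M}_{g,n})\cong S_n$ for the coarse space, the passage to the \emph{stack} uses Proposition \ref{autstack}: there is an injection $\chi:\Aut(\overline{\mathcal{M}}_{g,n}/K)\hookrightarrow\Aut(\overline{M}_{g,n}/K)\cong S_n$. Conversely $S_n$ acts on the stack by permuting marked points, giving a section, so $\chi$ is an isomorphism and $\Aut(\overline{\mathcal{M}}_{g,n})\cong S_n$ as well. This half of the argument is formal and characteristic-independent, relying only on the orbifold hypothesis (which holds since a general pointed curve in this range has trivial automorphism group), so I expect it to be routine.

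The main obstacle will be the careful audit of \cite{GKM} and \cite{Ma} in positive characteristic: one must confirm that every intermediate result—not merely the headline theorems—avoids Hodge-theoretic or transcendental input and that \cite[Theorem 0.9]{GKM} is genuinely available for all $\ch(K)\neq 2$ in the required generality. The exclusion of characteristic $2$ is forced precisely by \cite[Theorem 0.9]{GKM}, and the case $(g,n)=(2,1)$ is genuinely exceptional because the generic curve there carries the hyperelliptic involution, so the orbifold condition and the automorphism-freeness fail; these must be set aside explicitly, consistent with the hypotheses of the statement. Once this verification is complete, the result follows by combining the coarse-space computation with Proposition \ref{autstack}.
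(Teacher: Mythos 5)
Your proposal is correct and takes essentially the same route as the paper: the paper's Appendix likewise proves this by observing that the proof of \cite[Theorem 3.10]{Ma} uses only \cite[Theorem 0.9]{GKM} and the automorphism-freeness of a general $n$-pointed genus $g$ curve when $2g-2+n\geq 3$, both of which are available over any algebraically closed field of characteristic different from two, with the stack statement following formally as you describe. One side remark to correct: the generic curve in $\overline{M}_{2,1}$ is actually automorphism-free (the hyperelliptic involution does not fix a general marked point), so $(g,n)=(2,1)$ is excluded not because the orbifold condition fails but because that case is handled in \cite{Ma} by a separate argument resting on Royden's theorem, which is transcendental and only available in characteristic zero (cf.\ Remark \ref{n0}).
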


\begin{Remark}\label{n0}
By \cite[Proposition 3.6, Theorem 4.4]{Ma}, if $K$ is algebraically closed of characteristic zero we have that $\Aut(\overline{M}_{2,1})\cong \Aut(\overline{M}_{2,1})$ is trivial. Furthermore, by \cite[Proposition 3.5, Theorem 4.4]{Ma} we have that $\Aut(\overline{M}_{g})\cong \Aut(\overline{M}_{g})$ is trivial for any $g\geq 2$. The proofs are based on Royden's theorem \cite[Theorem 6.1]{Moc} which works just over an algebraically closed of characteristic zero .
\end{Remark}

\begin{Remark}\label{m1112}
Let $K$ be an algebraically closed field with $\ch(K)\neq 2,3$. By \cite[Proposition 4.5]{Ma} we have $\Aut(\overline{\mathcal{M}}_{1,2})$ is trivial. Indeed, if $\ch(K)=0$ by \cite[Theorem 2.1]{Hac} $T_{[Id]}\Aut(\overline{\mathcal{M}}_{1,2})\cong H^0(\overline{\mathcal{M}}_{1,2},T_{\overline{\mathcal{M}}_{1,2}})=0$. Finally, since $\overline{M}_{1,1}\cong \mathbb{P}^1$ and $\overline{\mathcal{M}}_{1,1}\cong \mathbb{P}(4,6)$ we have $\Aut(\overline{M}_{1,1})\cong PGL(2)$, while $\Aut(\overline{\mathcal{M}}_{1,1})\cong K^{*}$.
\end{Remark}

\subsubsection*{Acknowledgments}
This work was done while the second named author was a Post-Doctorate at IMPA, funded by CAPES-Brazil. We would like to thank \textit{Aise Johan de Jong} and \textit{Massimiliano Mella} for bringing this subject to our attention and for many helpful comments, \textit{Paul Hacking} for pointing us out \cite{DI}, and \textit{Nicola Pagani} for useful discussion about the singularities of coarse moduli spaces.


\begin{thebibliography}{999999}
\bibitem[Ar]{Ar} \bibaut{M. Artin}, \textit{Lectures on deformations of singularities}, Bombay, Tata Institute, 1976.
\bibitem[BM]{BM} \bibaut{A. Bruno, M. Mella}, \textit{The automorphism group of $\overline{M}_{0,n}$}, J. Eur. Math. Soc. Volume 15, Issue 3, 2013, 949-968.
\bibitem[Co]{Co} \bibaut{M. Cornalba}, \textit{On the locus of curves with automorphisms}, Annali di Matematica Pura ed Applicata, 1987, 149, Issue 1, 135-151.
\bibitem[CLS]{CLS} \bibaut{D. A. Cox, J. B. Little, H. K. Schenck}, \textit{Toric Varieties}, American Mathematical Soc, Graduate studies in mathematics, 2011.
\bibitem[DI]{DI} \bibaut{P. Deligne, L. Illusie}, \textit{Rel\`evements modulo $p^{2}$ et d\'ecomposition du complexe de de Rham}, Invent. Math. 89, 1987, no. 2, 247-270. 
\bibitem[FAG]{FAG} \bibaut{B. Fantechi, L. G\"ottsche , L. Illusie, S. Kleiman, N. Nitsure, A. Vistoli}, \textit{Fundamental algebraic geometry. Grothendieck's FGA explained}, Mathematical Surveys and Monographs, no. 123. American Mathematical Society, Providence, RI, 2005.
\bibitem[Fan]{Fan} \bibaut{B. Fantechi}, \textit{Deformation of Hilbert Schemes of Points on a Surface}, Compositio Mathematica 98, 1995, 205-217.
\bibitem[FMN]{FMN} \bibaut{B. Fantechi, E. Mann, F. Nironi}, \textit{Smooth toric DM stacks}, J. Reine Angew. Math. 648, 2010, 201-244.
\bibitem[GKM]{GKM} \bibaut{A. Gibney, S. Keel, I. Morrison}, \textit{Towards the ample cone of $\overline{M}_{g,n}$}, J. Amer. Math. Soc. 15, 2002, no. 2, 273-294. 
\bibitem[Gro]{Gro} \bibaut{A. Grothendieck}, \textit{Cohomologie locale des faisceaux coh\'erents et th\'eor\`emes de Lefschetz locaux et globaux}, SGA2, North Holland, 1968. 
\bibitem[Hac]{Hac} \bibaut{P. Hacking}, \textit{The moduli space of curves is rigid}, Algebra Number Theory 2, 2008, no. 7, 809-818.
\bibitem[Har]{Har} \bibaut{R. Hartshorne}, \textit{Algebraic geometry}, Graduate Texts in Mathematics, no. 52, Springer-Verlag, New York-Heidelberg, 1977.
\bibitem[Hay]{Hay} \bibaut{T. Haykawa}, \textit{Blowing Ups of 3-dimensional Terminal Singularities}, Publ RIMS, Kyoto Univ. 35, 1999, 515-570.
\bibitem[Ill]{Ill} \bibaut{L. Illusie}, \textit{Complexe Cotangent et D\'eformations I}, Springer Lecture Notes 239, Springer, 1971.
\bibitem[Ka]{Ka} \bibaut{M. Kapranov}, \textit{Deformations of moduli spaces}, 1997. Unpublished manuscript.
\bibitem[Ka2]{Ka2} \bibaut{M. Kapranov}, \textit{Veronese curves and Grothendieck-Knudsen moduli space $\overline{M}_{0,n}$}, J. Algebraic Geom. 2, 1993, no. 2, 239-262.
\bibitem[Ke]{Ke} \bibaut{S. Keel}, \textit{Basepoint freeness for nef and big line bundles in positive characteristic}, Ann. of Math. 2, 149, 1999, no. 1, 253-286.
\bibitem[KM]{KM} \bibaut{S. Keel, S. Mori}, \textit{Quotients by groupoids}, Ann. of Math. 2, 145, 1997, 193-213.
\bibitem[Kn]{Kn2} \bibaut{F. F. Knudsen}, \textit{The projectivity of the moduli space of stable curves II. The stacks $M_{g,n}$}, Math. Scand. 52, 1983, no. 2, 161-199.
\bibitem[Li]{Li} \bibaut{C. Liedtke}, \textit{A note on non-reduced Picard schemes}, J. Pure Appl. Algebra, 213, 2009, no. 5, 737-741.
\bibitem[Li1]{Li1} \bibaut{V. Lin}, \textit{Algebraic functions, configuration spaces, Teichm\"uller spaces, and new holomorphically combinatorial invariants}, Funct. Anal. Appl, 45, 2011, 204-224.
\bibitem[Li2]{Li2} \bibaut{V. Lin}, \textit{Configuration spaces of $\mathbb{C}$ and $\mathbb{CP}^1$: some analytic properties}, \arXiv{math/0403120v3}.
\bibitem[Ma]{Ma} \bibaut{A. Massarenti}, \textit{The automorphism group of $\overline{M}_{g,n}$}, J. London Math. Soc, 2014, 89, 131-150.
\bibitem[MM]{MM} \bibaut{A. Massarenti, M. Mella}, \textit{On the automorphisms of Hassett's moduli spaces}, \arXiv{1307.6828}.
\bibitem[Moc]{Moc}\bibaut{S. Mochizuki}, \textit{Correspondences on hyperbolic curves}, J. Pure Applied Algebra, 131, 1998, 227-244.
\bibitem[Mor]{Mor}\bibaut{A. Moriwaki}, \textit{The $\mathbb{Q}$-Picard group of the moduli space of curves in positive characteristic}, International J. Math, 12, 2001, 519-534.
\bibitem[OS]{OS} \bibaut{S. Okawa, T. Sano}, \textit{Noncommutative rigidity of the moduli stack of stable pointed curves}, \arXiv{1412.7060v1}.
\bibitem[Pa]{Pa} \bibaut{N. Pagani}, \textit{Harer stability and orbifold cohomology}, Pacific J. of Math. 267, 2014, no.2, 465-477.
\bibitem[Pe]{Pe}\bibaut{D. Perrin}, \textit{Approximation des sch$\acute{e}$mas en groupes, quasi compacts sur un corps}, Bull. Soc. Math. France, 104, 1976, no. 3, 323-335. 
\bibitem[Pr]{Pr}\bibaut{Y. G. Prokhorov}, \textit{Lectures on complements on log surfaces}, MSJ Memoirs, 10, Mathematical Society of Japan, Tokyo, 2001.
\bibitem[Re]{Re} \bibaut{M. Reid}, \textit{Young person's guide to canonical singularities}, Proc. Sympos. Pure Math, 46, Providence, R.I: American Mathematical Society, 345-414.
\bibitem[Ro]{Ro} \bibaut{H.L. Royden}, \textit{Automorphisms and isometries of Teichm\"uller spaces}, Advances in the theory of Riemann surfaces Ed. by L. V. Ahlfors, L. Bers, H. M. Farkas, R. C. Gunning, I. Kra, H. E. Rauch, Annals of Math. Studies No.66 (1971), 369-383.
\bibitem[Se]{Se} \bibaut{E. Sernesi}, \textit{Deformations of algebraic schemes}, Grundlehren der Mathematischen Wissenschaften, Fundamental Principles of Mathematical Sciences, 334.
\bibitem[Wi]{Wi}\bibaut{E. Witt}, \textit{Zyklische K\"orper und Algebren der Characteristik $p$ vom Grad $p^{n}$. Struktur diskret bewerteter perfekter K\"orper mit vollkommenem Restklassenk\"orper der Charakteristik $p^{n}$}, J. Reine Angew. Math, 176, 1936, 126-140.
\end{thebibliography}
\end{document}